\newcommand{\be}{\begin{equation}}
\newcommand{\ee}{\end{equation}}
\newcommand{\benn}{\begin{equation*}}
\newcommand{\eenn}{\end{equation*}}
\newcommand{\bea}{\begin{eqnarray}}
\newcommand{\eea}{\end{eqnarray}}
\newcommand{\beann}{\begin{eqnarray*}}
\newcommand{\eeann}{\end{eqnarray*}}
\theoremstyle{plain}
\newtheorem{theorem}{Theorem}[section]
\newtheorem{lemma}[theorem]{Lemma}
\newtheorem{definition}[theorem]{Definition}
\newtheorem{example}[theorem]{Example}
\newtheorem{remark}[theorem]{Remark}
\newtheorem{assumptions}[theorem]{Assumptions}
\newcommand{\E}{\noindent{$\mathbb{E}$ \ }}
\def\R{\mathbb{R}}
\def\C{\mathbb{C}}
\def\E{\mathbb{E}}
\def\cF{\mathcal{F}}
\def\cL{\mathcal{L}}
\def\cM{\mathcal{M}}
\def\cO{\mathcal{O}}
\def\cV{\mathcal{V}}
\def\cX{\mathcal{X}}
\def\txtb{{\textnormal{b}}}
\def\txtc{{\textnormal{c}}}
\def\txtd{{\textnormal{d}}}
\def\txte{{\textnormal{e}}}
\def\txti{{\textnormal{i}}}
\def\txts{{\textnormal{s}}}
\def\txtu{{\textnormal{u}}}
\def\txtD{{\textnormal{D}}}
\def\Id{{\textnormal{Id}}}
\def\ra{\rightarrow}
\def\I{\infty}
\title{Rough Center Manifolds}
\author{Christian Kuehn\thanks{Technical University of Munich (TUM), 
Faculty of Mathematics, 85748 Garching bei M\"unchen, Germany}~~and~Alexandra 
Neam\c tu\thanks{Technical University of Munich (TUM), 
Faculty of Mathematics, 85748 Garching bei M\"unchen, Germany}}
\begin{document}

\maketitle

\begin{abstract}
Since the breakthrough in rough paths theory for stochastic ordinary differential
equations (SDEs), there has been a strong interest 
in investigating the rough differential equation (RDE) approach and its numerous 
applications. Rough path techniques can stay closer to deterministic analytical methods 
and have the potential to transfer many pathwise ordinary differential equation (ODE) 
techniques more directly to a stochastic setting.  
However, there are few works that analyze dynamical properties of RDEs and connect 
the rough path / regularity structures, ODE and random dynamical systems approaches. 
Here we contribute to this aspect and analyze invariant manifolds for RDEs. By means 
of a suitably discretized Lyapunov-Perron-type method we prove the existence and 
regularity of local center manifolds for such systems. Our method directly works with
the RDE and we exploit rough paths estimates to obtain the relevant contraction 
properties of the Lyapunov-Perron map. 
\end{abstract}

\section{Introduction}
\label{sec:intro}

Classically, the theory of center manifolds is well-studied for ODEs~\cite{Carr,v} 
of the form
\be
\label{eq:ODEintro}
\frac{\txtd U}{\txtd t} := U'= AU+F(U),\qquad U=U_t\in\R^n,
\ee
where $A\in\R^{n\times n}$ is a matrix, $F(0)=0$, and $F(U)=\cO(\|U\|^2)$ as $\|U\|\ra 0$. 
Suppose $A$ has spectrum on the imaginary axis, 
then the equilibrium $U\equiv 0$ is non-hyperbolic, yet there are usually also eigenvalues with 
positive and negative real parts. Center manifolds help us to reduce the dynamics 
to the dimension of the number of eigenvalues with zero real part~\cite{Carr,v}.
The theory also naturally extends to several classes of infinite-dimensional 
partial differential equations (PDEs)~\cite{vioos}, e.g., thinking of $A$ 
in~\eqref{eq:ODEintro} as a differential operator and viewing~\eqref{eq:ODEintro} 
as an evolution equation on a Banach space~\cite{BatesJones,Henry}.

Our aim here is to develop a theory for center manifolds for SDEs driven by 
multiplicative noise, which goes far beyond the case of a Brownian motion. We 
are going to develop center manifolds within the theory of rough 
paths~\cite{Lyons,Gubinelli,FritzHairer}. Of course, the first step is to establish 
existence of center manifolds. Therefore, this work is entirely devoted to this 
aspect. Our proof shows that rough path theory is ideally suited to carry out
the Lyapunov-Perron method for existence of center manifolds for stochastic
systems. 

Based on the existence theory, and motivated by numerous physical applications, 
we are going to analyze approximations of center 
manifolds~\cite{Boxler1,Boxler2,ChenRobertsDuan,Roberts,SchoenerHaken} and 
bifurcations~\cite{Arnold,KnoblauchW} in future work. Regarding this, a challenging 
question that naturally arises is whether the expansions used in the rough 
paths/regularity structures theory can help us gain dynamical insight.\medskip

There is already considerable interest in analyzing stable/unstable or center 
manifolds for stochastic (partial) differential equations (SDEs/SPDEs), see 
e.g.~\cite{Arnold,BloemkerWang,Boxler1,Boxler2,ChenRobertsDuan,
DuanLuSchmalfuss,DuDuan}. However, in previous works the standard approach to derive
invariant manifold results is a transformation argument, often carried out for SDEs 
for $U=U_t$ of the form
\begin{equation}
\label{eqin1}
\txtd U = (A U + F (U)) ~\txtd t + U \circ ~\txtd \tilde{B}, \qquad U_0=:\xi\in\R^n.
\end{equation}
Here, $\circ$ stands for Stratonovich differential, $\tilde{B}=\tilde{B}_t$ 
is a two-sided real-valued 
Brownian motion, $A\in \mathbb{R}^{n\times n}$ is a matrix, $n\geq 1$, and 
$F:\mathbb{R}^{n}\to \mathbb{R}^{n}$ is Lipschitz continuous with $F(0)=0$. 
As for the ODE/PDE case, some results for~\eqref{eqin1} can be extended to certain 
unbounded operators $A$ generating strongly-continuous semigroups on separable Banach 
spaces, which is relevant for SPDEs. The classical transformation of~\eqref{eqin1} 
relies on the Ornstein-Uhlenbeck (OU) process as follows: Consider the unique 
solution $z=z_t$ of the one-dimensional OU process given by the SDE 
\begin{equation}
\label{ouoned}
\txtd z = -z ~\txtd t + \txtd \tilde{B}.
\end{equation}
Performing the Doss-Sussmann-type 
transformation $\tilde{U}=U \txte^{-z}$, one obtains that~\eqref{eqin1} can be transformed 
to a non-autonomous random differential equation 
\begin{equation}
\label{eqin2}
\tilde{U}' = A \tilde{U} + \tilde{F}(\tilde{U};z),
\end{equation}
where the map $\tilde{F}$ depends upon the time-dependent process $z$ and 
hence on $\tilde{B}$; see~\cite{Arnold}. Now, the random differential equation~\eqref{eqin2} 
can be analyzed using the properties of the OU process \cite[Lem.~2.1]{DuanLuSchmalfuss} 
and suitable assumptions on the coefficients. Of course, the same methods apply if 
one takes in~\eqref{eqin1} linear multiplicative noise, namely $G(U) \circ \txtd \tilde{B}$, where $G$ is a linear operator generating a strongly-continuous group commuting with the 
strongly-continuous semigroup generated by $A$.

Under suitable spectral assumptions on $A$, which imply the existence of 
an exponential trichotomy that entails invariant splittings of the phase space in 
stable/unstable/center subspaces, one can set up a classical Lyapunov-Perron method 
and derive under certain gap conditions invariant manifolds for \eqref{eqin2}. Since 
the random dynamical systems generated by~\eqref{eqin1} and~\eqref{eqin2} are conjugated, 
one can transfer all these results to~\eqref{eqin1}.\medskip

Instead, the goal of this paper is to use a direct pathwise 
approach~\cite{GarridoLuSchmalfuss} to investigate center manifolds for RDEs 
of the form~\eqref{sde1}. More precisely, we consider~\eqref{eqin1} 
driven by a quite general \emph{nonlinear multiplicative noise}. Moreover, the random 
input goes beyond the case of a Brownian motion and it can be a certain 
\emph{Gaussian process}. The only restriction comes from the H\"older regularity 
of its trajectories. We are going to prove our results
using \emph{rough paths}, which have not been employed for \emph{invariant manifolds} 
so far.
Since the breakthrough in the rough paths theory introduced by T.~Lyons \cite{Lyons}, 
there has been a growing interest in analyzing flows driven by rough paths~\cite{B} 
or random dynamical systems for rough differential 
equations~\cite{BailleulRiedelScheutzow}. The main techniques used in this work rely 
on Gubinelli's controlled rough paths as described in~\cite{Gubinelli,FritzHairer}. 
This theory gives a pathwise meaning to~\eqref{eqin1} \emph{without reducing it to an 
equation with random coefficients}. Since the solution of an RDE driven by a 
Stratonovich rough path lift yields a strong solution of a classical Stratonovich 
SDE \cite[Thm.~9.1(ii)]{FritzHairer}, many classical center manifold results for 
\eqref{eqin1} can be recovered as special cases.\\


This work is structured as follows. In Section~\ref{rde} we provide background 
on controlled rough paths and RDEs. Section \ref{rd} is devoted to dynamics of 
rough differential equations~\cite{BailleulRiedelScheutzow}. The existence of 
center manifolds is based on a discrete-time Lyapunov-Perron method. This technically 
challenging step is necessary since we work with pathwise integrals and therefore need 
to control at each step the norms of the random input on a fixed time-interval. After 
deriving suitable estimates of the controlled rough integrals, the existence of local center manifolds for rough differential equations
 (Theorem~\ref{lemma:cm}) follows using a random dynamical systems approach. 
The results obtained for the discrete Lyapunov-Perron map can then be extended to 
the time-continuous one~\cite{LianLu,GarridoLuSchmalfuss}. Finally, we point out in 
Section~\ref{smoothness} the main arguments, which lead to the smoothness of the manifolds 
obtained. This requires technical transfer of several existing 
ideas~\cite{DuanLuSchmalfuss,ChenRobertsDuan,GuoShen} to the RDE context; Appendix~\ref{a} 
summarizes basic methods used to establish the fiber-wise 
invariance~\cite{Arnold,DuanLuSchmalfuss,WaymireDuan} of the random manifolds. 
Appendix~\ref{b} provides additional information on the extension of our proof from
dichotomies to trichotomies. 

We also point out the translation of our 
results to the language of regularity structures should be possible as controlled 
rough paths can be viewed as one particular instance of a regularity structure; 
see~\cite{Brault}. Hence, the techniques we developed here for RDEs/SDEs may have 
the potential to be eventually applicable for quite large classes of singular 
SPDEs~\cite{Hairer,GubinelliPerkowski,HairerShen,HairerWeber,BerglundKuehn1}. 
We leave this aspect as a major direction for future work.\medskip 

\textbf{Notation:} We use $\E$ to denote the expectation, $\otimes$ for the tensor 
product, $\oplus$ for the direct sum, $C^{m}$ for the space of $m$-times differentiable 
maps, $C^m_\txtb$ for the space of $m$-times differentiable maps with bounded 
derivatives, and $C^\alpha$ for $\alpha$-H\"older maps.\medskip

\textbf{Acknowledgments:} CK and AN have been supported by a DFG grant in the
D-A-CH framework (KU 3333/2-1). CK also acknowledges support by a Lichtenberg 
Professorship. CK also thanks the EU for partial support within the TiPES project
funded by the European Union Horizon 2020 research and innovation program under
grant No.~820970. The authors thank the referee for the numerous extremely valuable suggestions.
 
\section{Rough differential equations} 
\label{rde} 

We fix the time interval $[0,1]$ due to technical reasons as discussed further in 
Section~\ref{lpm}. For $t\in[0,1]$ consider the rough differential equation (RDE)
\begin{equation}
\label{sde1}
\txtd U = [A U + F(U)] ~\txtd t + G(U) ~\txtd \textbf{W},\qquad U=U_t,~U_0=\xi\in\R^n,
\end{equation}
where $n\geq 1$, $A\in\R^{n\times n}$ is a matrix, $F:\R^n\to \R^n$ is Lipschitz, 
$G:\R^n \to \R^{n\times d}$ is a $C^{3}_{\txtb}$ matrix-valued map, and the rough path $\textbf{W}$ 
will be defined below; one may think of a vector of independent two-sided Brownian 
motions in $\R^d$ as one key example leading to a construction of $\textbf{W}$, 
yet our results are not limited to this case. The solution of~\eqref{sde1} can 
formally be written as
\begin{equation}
\label{integraleq}
U_{t}= S(t) \xi + \int\limits_{0}^{t} S(t-r)F(U_{r})~\txtd r 
+ \int\limits_{0}^{t} S(t-r) G(U_{r}) ~\txtd \textbf{W}_{r},
\end{equation}
where $S(t):=\txte^{tA}$, and the last integral is given by Gubinelli's controlled 
rough integral to be defined below. Fix $\alpha\in (1/3,1/2]$ and a finite-dimensional
vector space $\cV$ (the reader may think of $\cV=\R^d$ as one concrete example we shall
often use for $\cV$). Then we define 
$\textbf{W}:=(W,\mathbb{W})$ as an $\alpha$-H\"older rough-path, more precisely  
\benn
W\in C^{\alpha}([0,1];\cV) \qquad \text{and}\qquad  
\mathbb{W}\in C^{2\alpha}([0,1]^{2};\cV \otimes \cV),
\eenn 
and the connection between $W$ and $\mathbb{W}$ is given by Chen's relation. 
This means that
\begin{align}
\label{chen}
\mathbb{W}_{s,t} -\mathbb{W}_{s,u} -\mathbb{W}_{u,t} = {W}_{s,u} \otimes W_{u,t}.
\end{align}
where $W_{s,t}:=W_{t} -W_{s}$. For a smooth path $W$, $\mathbb{W}$ could be constructed 
using iterated integrals of $W$. The theory of controlled rough paths gives a meaning to 
(\ref{integraleq}) even if the paths of the driving process are not smooth, but only 
$\alpha$-H\"older regular for $\alpha\in(1/3,1/2]$. This includes Brownian motion and 
fractional Brownian motion with Hurst parameter $H\in(1/3,1/2]$. The second order process 
$\mathbb{W}$ can be thought of as 
\begin{align}\label{omega2}
\int\limits_{s}^{t} W_{s,u} \otimes \txtd  W_{u}.
\end{align}
Note that if $\mathbb{W}$ exists, it is not unique, since for any $\R^d\otimes \R^d$-valued 
$2\alpha$-H\"older function $f$, we have that $\mathbb{W}_{s,t}+ f_{t} - f_{s}$ satisfies 
again all the required properties. For more information about the construction of 
$\mathbb{W}$ for Gaussian processes see \cite[Ch.~10, Thm.~10.4]{FritzHairer}, where 
the regularity of the covariance function allows one to define $\mathbb{W}$ as an 
iterated integral; see also Section~\ref{rd}.

\begin{remark}
Results regarding global existence and uniqueness of a solution of (\ref{sde1}) for 
smooth $F$ and $G$ can be found in \cite[Ch.~8]{FritzHairer}. These are stated without 
using Duhamel's formula and incorporating $\txtd t$ in a space-time rough path. Results 
without incorporating $\txtd t$ in a space-time rough path can be found 
in~\cite{CoutinLejay,FritzVictoir}. Since we are going to investigate center manifolds 
for (\ref{sde1}), we prefer the mild formulation specified in (\ref{integraleq}). In fact,
mild solutions to SPDEs~\cite{DaPratoZabczyk} and their 
variants~\cite{KuehnNeamtu,PronkVeraar}, are used implicitly in regularity 
structures~\cite{Hairer} and form a cornerstone of deterministic PDE 
dynamics~\cite{Henry} so it is also natural for our future goals to use this notion 
of solution.
\end{remark}

Now we consider preliminary results, which are necessary to establish the existence of 
a solution for (\ref{sde1}), cf.~\cite[Sec.~8.5]{FritzHairer}. These results, and the 
techniques by which they are proven, are going to also play a key role for our center 
manifold problem. Throughout this section $|\cdot|$ denotes the usual Euclidean norm, 
and $|\cdot|$ also denotes the induced Euclidean norm of linear operators. We write    
$\|\cdot\|_{\infty}$ for the supremum norm, $\|\cdot\|_{\alpha}$ for the $\alpha$-H\"older 
semi-norm on $[0,1]$ and $|||\cdot|||_{\alpha}$ for the $\alpha$-H\"older norm on $[0,1]$. Furthermore, $\cL(\cV,\cX)$ will be the space of linear maps between 
two finite-dimensional vector spaces $\cV,\cX$. \\

As commonly met in the rough paths theory, compare~\cite[Ch.~4]{FritzVictoir}, we firstly work with H\"older semi-norms and point out the main techniques required in order to solve~\eqref{integraleq}. Afterwards, we provide some preliminary results for the local center manifolds theory developed in Section~\ref{lpm}. These are based on a cut-off argument which will be illustrated in Section~\ref{cut}. 

\begin{definition}
\label{def:crp} 
A path $Y\in C^{\alpha}([0,1];\cX)$ is controlled by $W\in C^{\alpha}([0,1];\cV)$ 
if there exists $Y'\in C^{\alpha}([0,1];\cL(\cV,\cX))$ such that
\begin{align}
\label{def1}
Y_{t} = Y_{s} + Y'_{s} W_{s,t} + R^{Y}_{s,t},
\end{align}
where the remainder $R^{Y}$ has $2\alpha$-H\"older regularity. 
 The space of controlled 
rough paths $(Y,Y')$ on the time-interval $[0,1]$ is denoted by $D^{2\alpha}_{W}([0,1];\cX)=:D^{2\alpha}_{W}$. 
This space is endowed with the semi-norm 
\begin{align}\label{norm}
\|Y,Y'\|_{D^{2\alpha}_W}:= \|Y'\|_{\alpha} + \|R^{Y}\|_{2\alpha}.
\end{align}
\end{definition}
A norm on $D^{2\alpha}_W$ is given by $|Y_{0}| + |Y'_{0}| + \|Y'\|_{\alpha} + \|R^{Y}\|_{2\alpha} $ or $\|Y\|_{\infty} + \|Y'\|_{\infty} + \|Y'\|_{\alpha} + \|R^{Y}\|_{2\alpha}$, where both norms are equivalent, see~\cite[Rem.~4.10]{Brault}. The norm on $D^{2\alpha}_{W}([0,1];\cX)$ will be denoted by $\|\cdot\|_{\textbf{D}^{2\alpha}_{W}([0,1];\cX)}$ (short $\|\cdot\|_{\textbf{D}^{2\alpha}_{W}}$).
Obviously, if $Y_0=0$ and 
$Y'_0=0$, then~\eqref{norm} is a norm on $D^{2\alpha}_{W}$. This fact will be exploited in Section~\ref{sectfp}. The term $Y'$ is referred to as the Gubinelli derivative of $Y$, see~\cite{Gubinelli} 
and \cite[Ch.~4]{FritzHairer}\footnote{For smooth paths $Y$ and $W$, the choice of $Y'$ is not unique. However, one can show that for rough inputs $W$, $Y'$ is uniquely determined by $Y$, see~\cite[Rem.~4.7 and Sec.~6.2]{FritzHairer}.}.

\begin{example}
Consider integrating the RDE~\eqref{sde1} directly over time, then one may
give meaning to the integral $\int G(U)~\txtd \textbf{W}_r$ by using 
\be
\label{eq:concchoice}
\quad Y=G(U), \quad Y'=\txtD G(U)G(U), \quad \cV=\R^d,\quad \cX=\R^{n\times d},
\ee
$\txtD G$ denotes the total derivative of $G$. In this case, we also note that 
\benn
\cL(\cV,\cX)=\cL(\R^d,\R^{n\times d})=\cL(\R^d,\cL(\R^d,\R^n))\simeq
\cL(\R^d\otimes \R^d,\R^n).
\eenn
Yet, we shall see that this example does not suffice for our case as we 
also would like to study solution formulas involving a (semi-)group. 
\end{example}

We proceed with some rough path estimates. Here we emphasize that 
$C$ stands for a universal constant, 
which may vary from line to line. The dependence of this constant 
\benn
C=C[\cdot,\cdot,\ldots]
\eenn
on certain parameters and/or problem input will be explicitly stated in square 
brackets. $C$ can always be uniformly chosen over the unit interval, i.e., for 
$T\in[0,1]$.\medskip

Re-writing~(\ref{def1}) 
entails $Y_{s,t}= Y'_{s} W_{s,t} + R^{Y}_{s,t}$, so one obtains
\begin{align*}
|Y_{s,t}| \leq \|Y'\|_{\infty} \|W\|_{\alpha}(t-s)^{\alpha} 
+ \|R^{Y}\|_{2\alpha}(t-s)^{2\alpha},~\mbox{ for } s,t\in[0,1].
\end{align*}
This immediately entails the following estimates for the path
$Y$:
\begin{align}\label{y:infty}
\|Y\|_{\infty} \leq \|Y'\|_{\infty} \| W\|_{\alpha}  
+ \| R^{Y}\|_{2\alpha} ~~ 
\end{align}
as well as
\begin{equation}
\label{y}
|||Y|||_{\alpha} \leq  |Y_0| + \| Y'\|_{\infty} \|W\|_{\alpha} +  
\|R^{Y}\|_{2\alpha}.
\end{equation}
Since
	$$\|Y \| _{\infty} \leq |Y_{0}| + \|Y\|_{\alpha},$$
we can further estimate the $\alpha$-H\"older semi-norm of $Y$ as
\begin{equation}
\label{esthoeldernormy}
\| Y\|_{\alpha} \leq C (1+ \|W\|_{\alpha}) (|Y'_{0}| + \|Y,Y'\|_{D^{2\alpha}_{W}}). 
\end{equation} 
The second estimate immediately follows from (\ref{y}) combined with the 
definition of the semi-norm on $D^{2\alpha}_{W}$. The next step is to explain 
the concept of the integral we use in (\ref{integraleq}). 

\begin{theorem}(\cite[Prop.~1]{Gubinelli})
\label{gubinelliintegral}
Let $(Y,Y')\in D^{2\alpha}_{W}([0,1];\R^{n\times d})$ and $\bm{W}=(W,\mathbb{W})$ 
be an $\R^d$-valued 
$\alpha$-H\"older rough path for some $\alpha\in(\frac{1}{3},\frac{1}{2}]$. Furthermore, 
$\mathcal{P}$ stands for a partition of $[0,1]$. Then, the integral of $Y$ against 
$\bm{W}$ defined as
\begin{equation}\label{Gintegral}
	\int\limits_{s}^{t} Y_{r} ~\txtd\textbf{W}_{r} 
	:= \lim\limits_{|\mathcal{P}|\to 0} \sum\limits_{[u,v]\in\mathcal{P}} 
	(Y_{u}W_{u,v} + Y'_{u}\mathbb{W}_{u,v} )
\end{equation}
exists for every pair $s,t\in[0,1]$. Moreover, the estimate
\begin{equation}\label{estimate_g_integral}
	\left| \int\limits_{s}^{t} Y_{r}~ \txtd\textbf{W}_{r} - Y_{s}W_{s,t} - 
	Y'_{s}\mathbb{W}_{s,t} \right| \leq C (\|W\|_{\alpha} \|R^{Y}\|_{2\alpha} 
	+ \|\mathbb{W} \|_{2\alpha} \|Y' \|_{\alpha} ) |t-s|^{3\alpha}
\end{equation} 
holds true for all $s,t\in[0,1]$.
The map from $\mathcal{D}^{2\alpha}_{W}([0,1];\R^{d\times n})$ to 
$D^{2\alpha}_{W}([0,1];\R^n)$ given by 
\benn
(Y,Y')\mapsto (P,P'):=
\left(\int\limits_{0}^{\cdot}Y_{r}~\txtd\textbf{W}_{r} , Y_{\cdot} \right), 
\eenn
is linear and continuous. Furthermore, the estimate
\begin{equation}\label{bounddalphaintegral}
	 \| P,P'\|_{D^{2\alpha}_{W}} \leq \| Y\|_{\alpha} + \|Y'\|_{\infty} 
	\|\mathbb{W}\|_{2\alpha} + C (\|W\|_{\alpha} \|R^{Y}\|_{2\alpha} 
	+ \|\mathbb{W} \|_{2\alpha} \|Y' \|_{\alpha} )
\end{equation}
is valid.
\end{theorem}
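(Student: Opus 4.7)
The plan is to invoke the sewing (or Young--type) lemma with the germ
\benn
\Xi_{s,t} := Y_{s} W_{s,t} + Y'_{s} \mathbb{W}_{s,t},
\eenn
so that the compensated Riemann sums in (\ref{Gintegral}) converge because their increments decay faster than linearly. The central step is therefore to verify that, for any $s<u<t$, the triangle $\delta\Xi_{s,u,t}:= \Xi_{s,t} - \Xi_{s,u} - \Xi_{u,t}$ has size $\lesssim |t-s|^{3\alpha}$ with an explicit constant. Since $3\alpha>1$ by assumption, Abel--type summation then yields existence of the limit and the estimate (\ref{estimate_g_integral}).

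For the algebraic computation of $\delta\Xi_{s,u,t}$, I first decompose using additivity of $W$ and Chen's relation (\ref{chen}):
\begin{align*}
\delta\Xi_{s,u,t} &= (Y_s - Y_u) W_{u,t} + Y'_s (\mathbb{W}_{s,t} - \mathbb{W}_{s,u}) - Y'_u \mathbb{W}_{u,t}\\
&= -Y_{s,u} W_{u,t} + Y'_s\big(\mathbb{W}_{u,t} + W_{s,u}\otimes W_{u,t}\big) - Y'_u \mathbb{W}_{u,t}.
\end{align*}
Now I substitute the controlled expansion $Y_{s,u} = Y'_s W_{s,u} + R^Y_{s,u}$ from (\ref{def1}); the leading term $Y'_s W_{s,u} W_{u,t}$ cancels against $Y'_s (W_{s,u}\otimes W_{u,t})$ (using the canonical identification $\cL(\cV,\cL(\cV,\R^n))\simeq \cL(\cV\otimes\cV,\R^n)$), leaving
\benn
\delta\Xi_{s,u,t} = - R^{Y}_{s,u} W_{u,t} - Y'_{s,u}\, \mathbb{W}_{u,t}.
\eenn
Using the defining H\"older bounds on $R^Y$, $W$, $Y'$, $\mathbb{W}$, I get
\benn
|\delta\Xi_{s,u,t}| \leq \bigl(\|W\|_{\alpha}\|R^{Y}\|_{2\alpha} + \|\mathbb{W}\|_{2\alpha}\|Y'\|_{\alpha}\bigr)|t-s|^{3\alpha},
\eenn
which is precisely the hypothesis of the sewing lemma and produces both the existence of the limit in (\ref{Gintegral}) and the remainder bound (\ref{estimate_g_integral}).

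For the second half, I set $P_t := \int_0^t Y_r\,\txtd\textbf{W}_r$ and declare $P'_t := Y_t$ as its Gubinelli derivative, and I check that $(P,P')\in D^{2\alpha}_{W}$. From the definition of $\Xi$ and (\ref{estimate_g_integral}),
\benn
R^{P}_{s,t} := P_{s,t} - Y_{s} W_{s,t} = Y'_{s}\mathbb{W}_{s,t} + \bigl(P_{s,t} - \Xi_{s,t}\bigr),
\eenn
so the triangle inequality, together with $|Y'_s|\leq \|Y'\|_{\infty}$, gives a $2\alpha$-H\"older bound on $R^P$ of the form $\|Y'\|_{\infty}\|\mathbb{W}\|_{2\alpha} + C(\|W\|_{\alpha}\|R^{Y}\|_{2\alpha} + \|\mathbb{W}\|_{2\alpha}\|Y'\|_{\alpha})$. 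Combining this with the trivial identity $\|P'\|_{\alpha} = \|Y\|_{\alpha}$ yields (\ref{bounddalphaintegral}). Linearity of $(Y,Y')\mapsto (P,P')$ is inherited from the linearity of the compensated Riemann sums and of limits, and continuity then follows from (\ref{bounddalphaintegral}) together with the equivalence of norms on $\textbf{D}^{2\alpha}_{W}$.

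The main obstacle is the algebraic cancellation in $\delta\Xi$: one has to carefully exploit Chen's relation (\ref{chen}) together with the controlled-path expansion of $Y$ so that the terms of order $\alpha+\alpha$ exactly annihilate, leaving only genuine $3\alpha$-remainders. Once this cancellation is identified, the rest is an application of the sewing lemma and bookkeeping of H\"older norms; no further rough-path machinery is required.
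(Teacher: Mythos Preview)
Your proof is correct and follows the standard sewing-lemma argument. Note, however, that the paper does not actually prove this theorem: it is quoted from \cite[Prop.~1]{Gubinelli} (see also \cite[Thm.~4.10]{FritzHairer}) and used as a black box throughout. Your argument---computing $\delta\Xi_{s,u,t}$ via Chen's relation and the controlled-path expansion to exhibit the $3\alpha$-decay, then invoking the sewing lemma---is precisely the proof given in those references, so there is nothing to compare against within the paper itself.
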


Next, we have to consider the semigroup $S=S(t)$ and provide estimates for certain constants 
depending on $S$, which are helpful throughout this section, see also Section~\ref{lpm}. 
We recall the following simple result:

\begin{lemma}
\label{estimate:semigroup}
Let $\cX$ be a finite-dimensional vector space and $A\in\cL(\cX,\cX)=:\cL(\cX)$. Then 
there exists a constant $\tilde{M}\geq 1$ such that
$$|S(t)| \leq \tilde{M} \txte^{ t |A|}  
\quad\emph{and}\quad 
|S(t) -\textnormal{Id} |\leq \tilde{M} |A| t \txte^{t |A|},~~  t\geq 0.$$
\end{lemma}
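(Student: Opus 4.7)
The plan is to work directly from the power series definition $S(t) = \exp(tA) = \sum_{k=0}^{\infty} (tA)^k/k!$, which converges in $\cL(\cX)$ by absolute convergence since $\cX$ is finite-dimensional. First I would choose a norm on $\cL(\cX)$ that is submultiplicative with respect to the given Euclidean norm on $\cX$; the induced operator norm works, and the constant $\tilde M\geq 1$ absorbs the equivalence constant between this operator norm and whatever matrix norm $|\cdot|$ is in use on $\cL(\cX)$.

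For the first estimate I would simply apply the triangle inequality and submultiplicativity term by term:
\benn
|S(t)| \;\leq\; \sum_{k=0}^{\infty} \frac{t^{k} |A|^{k}}{k!} \;=\; \txte^{t|A|},
\eenn
and then multiply by $\tilde M$ to account for the norm change.

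For the second estimate, I would separate off the $k=0$ term and factor $tA$ out of the remaining series:
\benn
S(t)-\Id \;=\; \sum_{k=1}^{\infty} \frac{(tA)^{k}}{k!} \;=\; tA\cdot \sum_{k=1}^{\infty} \frac{(tA)^{k-1}}{k!}.
\eenn
Taking norms and using submultiplicativity, the tail series is dominated by $\sum_{j\geq 0}(t|A|)^{j}/j!=\txte^{t|A|}$, which yields $|S(t)-\Id|\leq \tilde M|A|t\,\txte^{t|A|}$ after absorbing the norm-equivalence constant.

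There is no real obstacle; both inequalities reduce to elementary power-series estimates once submultiplicativity has been arranged. The only subtlety worth flagging in the write-up is why a single constant $\tilde M\geq 1$ suffices for both bounds, which comes from the fact that on the finite-dimensional space $\cL(\cX)$ any two norms are equivalent, so one may fix $\tilde M$ as the larger of the two equivalence constants needed to pass between the chosen $|\cdot|$ and a submultiplicative operator norm.
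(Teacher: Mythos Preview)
Your proof is correct; the paper itself does not prove this lemma but merely states it as a recalled elementary fact, so there is nothing to compare against. One small simplification: since the paper explicitly takes $|\cdot|$ on $\cL(\cX)$ to be the induced operator norm (which is already submultiplicative), the norm-equivalence discussion is unnecessary and $\tilde M=1$ works directly---which is exactly what the paper remarks immediately after the lemma.
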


For simplicity, we can assume $\tilde{M}=1$ throughout this section, since we only need to derive 
appropriate time-regularity results. \medskip

In order to construct (\ref{Gintegral}) we need that 
$(Y,Y')\in D^{2\alpha}_{W}([0,1];\mathcal{L}(\cV,\cX))$. Therefore, to justify that
\begin{align}
\label{s:integral}
\int\limits_{0}^{\cdot} S(\cdot - r) Y_{r} ~\txtd \textbf{W}_{r},
\end{align}
can be defined by (\ref{Gintegral}) we need to establish that 
$(S(t-\cdot)Y_{\cdot}, (S(t-\cdot)Y_{\cdot})') \in D^{2\alpha}_{W}([0,1];\mathcal{L}(\cV,\cX))$.
This is contained in the next result, which contains the choice~\eqref{eq:concchoice} as a
special case.

\begin{lemma}
\label{derivativesemigroup} 
Let $(Y,Y')\in D^{2\alpha}_{W}([0,1];\mathcal{L}(\cV,\cX))$. For every $t\in[0,1]$ 
we set $Z^{t}_{\cdot}:= S(t-\cdot{})Y_{\cdot}$. Then we have $(Z^{t}, (Z^{t})' ) 
\in D^{2\alpha}_W([0,1];\mathcal{L}(\cV,\cX))$, where $(Z^{t}_{\cdot})'= 
S(t-\cdot{})Y'_{\cdot}$.
\end{lemma}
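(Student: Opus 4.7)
The plan is to verify the controlled-rough-path expansion directly by decomposing the increment $Z^{t}_{u,v}:=Z^{t}_{v}-Z^{t}_{u}$ into a linear term in $W_{u,v}$ plus an explicit remainder, and then to check the two required H\"older bounds using only the semigroup estimates from Lemma~\ref{estimate:semigroup} together with the controlled structure of $(Y,Y')$.

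First I would write
\benn
Z^{t}_{v}-Z^{t}_{u}=S(t-v)(Y_{v}-Y_{u})+\bigl(S(t-v)-S(t-u)\bigr)Y_{u}
\eenn
and substitute $Y_{v}-Y_{u}=Y'_{u}W_{u,v}+R^{Y}_{u,v}$. Adding and subtracting $S(t-u)Y'_{u}W_{u,v}$ identifies the candidate Gubinelli derivative $(Z^{t}_{u})'=S(t-u)Y'_{u}$ and the candidate remainder
\benn
R^{Z^{t}}_{u,v}=\bigl(S(t-v)-S(t-u)\bigr)Y'_{u}W_{u,v}+S(t-v)R^{Y}_{u,v}+\bigl(S(t-v)-S(t-u)\bigr)Y_{u}.
\eenn

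Next I would check the $\alpha$-H\"older regularity of $(Z^{t})'$ by factoring
\benn
S(t-v)Y'_{v}-S(t-u)Y'_{u}=S(t-v)(Y'_{v}-Y'_{u})+\bigl(S(t-v)-S(t-u)\bigr)Y'_{u},
\eenn
bounding the first summand by $|S(t-v)|\,\|Y'\|_{\alpha}(v-u)^{\alpha}$ and the second summand, via Lemma~\ref{estimate:semigroup}, by $C|A|\,\txte^{|A|}\|Y'\|_{\infty}(v-u)$, which is dominated by a constant multiple of $(v-u)^{\alpha}$ on $[0,1]$ since $\alpha\leq 1$. For the remainder I would bound each of the three terms in turn: the first by $C|A|\,\|Y'\|_{\infty}\|W\|_{\alpha}(v-u)^{1+\alpha}$, the second by $|S(t-v)|\,\|R^{Y}\|_{2\alpha}(v-u)^{2\alpha}$, and the third by $C|A|\,\|Y\|_{\infty}(v-u)$, so that each contribution is controlled by $(v-u)^{2\alpha}$ after using the controlled-path norm of $(Y,Y')$ and the embedding $\|Y\|_{\infty}\leq|Y_{0}|+\|Y\|_{\alpha}$ via~\eqref{esthoeldernormy}.

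The only delicate point is that the third term in $R^{Z^{t}}$, and the correction term in $(Z^{t})'$, only scale linearly in $v-u$; they are acceptable precisely because $2\alpha\leq 1$ and $v-u\leq 1$, so $(v-u)\leq(v-u)^{2\alpha}$. This is the main obstacle in the sense that it forces the restriction to a bounded time-interval and explains the choice $[0,1]$ made in Section~\ref{rde}; once this observation is in place, the proof is a direct assembly of the three bounds with constants depending only on $|A|$, $\|W\|_{\alpha}$, and the controlled-path data of $(Y,Y')$.
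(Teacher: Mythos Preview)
Your proposal is correct and follows essentially the same approach as the paper: both arguments decompose $Z^{t}_{u,v}$ via the semigroup identity, identify $(Z^{t})'=S(t-\cdot)Y'$, and bound the remainder using Lemma~\ref{estimate:semigroup} together with $2\alpha\leq 1$ on $[0,1]$. The only cosmetic difference is that the paper combines your first and third remainder terms into a single expression $(S(t-v)-S(t-u))Y_{v}+S(t-u)R^{Y}_{u,v}$ (obtained by writing $Y'_{u}W_{u,v}=Y_{u,v}-R^{Y}_{u,v}$), yielding two terms instead of three, but the estimates and the key observation about the linear-in-$(v-u)$ contribution are identical.
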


\begin{proof} Regarding the definition of $D^{2\alpha}_{W}$ we have to show that 
$Z^{t}\in C^{\alpha}$, $(Z^{t})'\in C^{\alpha}$ and $R^{Z^{t}}\in C^{2\alpha}$.
We fix $0\leq u < v \leq 1$. Then it follows that
\begin{align*}
	 |Z^{t}_{u,v}|& = |  S(t-v) Y_{v} - S(t-u) Y_{u} | \\
	 & \leq | S(t-v) Y_{v} - S(t-v) Y_{u} | + |(S(t-v)- S(t-u)) Y_{u}|\\
	 & \leq |S (t-v)| |Y_{u,v} | + |S(t-v) | |S(v-u)-\mbox{Id} | |Y_{u}|  \\
	 & \leq \txte^{t |A|} \|Y\|_{\alpha} |v-u|^{\alpha} + |A| \txte^{2t |A|}  
	\|Y\|_{\infty} |v-u|\\
	 & \leq  \txte^{t |A|} \|Y\|_{\alpha} |v-u|^{\alpha} + |A|  \txte^{2t |A|} 
	(|Y_{0}| + \|Y\|_{\alpha} ) |v-u|.
\end{align*}
Using the previous computation, we derive
\begin{align}
\label{estz}
\|Z^{t}\|_{\alpha} \leq C |A| \txte^{2t |A| } ( |Y_{0}| + \|Y\|_{\alpha} ).
\end{align}
Replacing $Y$ with $Y'$ in the previous computation, one obtains the same 
estimate for the Gubinelli derivative $(Z^{t}_{\cdot})'$ from which 
the $\alpha$-H\"older regularity immediately follows.	Next, we now focus on 
the remainder $R^{Z^{t}}$ and aim to show that it is $2\alpha$-H\"older continuous. 
We have:
\begin{align}
 	|R^{Z^{t}}_{u,v}| &= |Z^{t}_{u,v} - (Z^{t}_{u})'W_{u,v} | \label{remainder1} \\
 	& =|S(t-v) Y_{v} - S(t-u) Y_{u} - S(t-u) Y'_{u} W_{u,v} |\nonumber\\
 	& = | S(t-v) Y_{v} - S(t-u) Y_{u} - S(t-u) Y_{u,v} + S(t-u) R^{Y}_{u,v} |\nonumber\\
 	& \leq |S(t-v) | |S(v-u) -\mbox{Id}| |Y_{v} | + |S(t-u) | | R^{Y}_{u,v}|\nonumber\\
 	& \leq | A| \txte^{2 t |A| }  \| Y\|_{\infty} |v-u| + 
	\txte^{t|A|} \|R^{Y}\|_{2\alpha}|v-u|^{2\alpha}. \nonumber
\end{align}
From this we infer that $R^{Z^{t}}\in C^{2\alpha}$ and
$$\|R^{Z^{t}}\|_{2\alpha} \leq C |A| \txte^{2 t |A|} 
(|Y_{0}|+ \|Y\|_{\alpha} + \|R^{Y}\|_{2\alpha} ) .$$
This completes the proof.
\end{proof}

Up to now, we have shown that we can define the 
integral (\ref{s:integral}) by (\ref{Gintegral}). Now, we compute its Gubinelli 
derivative and prove that
\begin{align*}
\left(\int\limits_{0}^{\cdot} S(\cdot - r) Y_{r} ~\txtd\textbf{W}_{r} , 
\left(\int\limits_{0}^{\cdot} S(\cdot - r) Y_{r} ~\txtd\textbf{W}_{r} \right)'\right)
\end{align*}
forms a controlled rough path.

\begin{lemma}
\label{integralgub}
Let $(Y,Y')\in D^{2\alpha}_{W}([0,1];\mathcal{L}(\cV,\cX))$. Then 
\begin{align}
\label{integral}
\left(  \int\limits_{0}^{\cdot{}}  S(\cdot{}-r)Y_{r} ~\txtd\textbf{W}_{r}, 
Y_{\cdot} \right ) \in D^{2\alpha}_{W}([0,1];\cX).
\end{align}
\end{lemma}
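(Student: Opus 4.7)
The plan is to verify the three requirements of Definition~\ref{def:crp} directly: that $P_t := \int_0^t S(t-r) Y_r \, \txtd \textbf{W}_r$ lies in $C^\alpha$, that its proposed Gubinelli derivative $P' := Y$ lies in $C^\alpha$ (which is immediate from $(Y,Y') \in D^{2\alpha}_W$), and that the remainder $R^P_{s,t} := P_t - P_s - Y_s W_{s,t}$ has $2\alpha$-H\"older regularity. The last task is the one requiring work.

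The first step is to exploit the semigroup property $S(t-r) = S(t-s)S(s-r)$ for $r \leq s \leq t$, which yields the decomposition
\benn
P_t - P_s = (S(t-s) - \Id) P_s + \int_s^t S(t-r) Y_r \, \txtd \textbf{W}_r.
\eenn
For the second summand I would invoke Lemma~\ref{derivativesemigroup} with $Z^t_\cdot = S(t-\cdot)Y_\cdot$, whose Gubinelli derivative is $S(t-\cdot)Y'_\cdot$, and then apply the local expansion~\eqref{estimate_g_integral} from Theorem~\ref{gubinelliintegral} to write
\benn
\int_s^t S(t-r) Y_r \, \txtd \textbf{W}_r = S(t-s) Y_s W_{s,t} + S(t-s) Y'_s \mathbb{W}_{s,t} + E_{s,t},
\eenn
with $|E_{s,t}| \leq C(\|W\|_\alpha \|R^{Z^t}\|_{2\alpha} + \|\mathbb{W}\|_{2\alpha}\|(Z^t)'\|_\alpha)(t-s)^{3\alpha}$, where the norms on the right are controlled by Lemma~\ref{derivativesemigroup}. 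Combining the two displays and subtracting $Y_s W_{s,t}$ gives
\benn
R^P_{s,t} = (S(t-s) - \Id) P_s + (S(t-s) - \Id) Y_s W_{s,t} + S(t-s) Y'_s \mathbb{W}_{s,t} + E_{s,t}.
\eenn

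The third step is to bound each of the four pieces by $C(t-s)^{2\alpha}$. Using Lemma~\ref{estimate:semigroup}, $|S(t-s) - \Id| \leq |A|\txte^{|A|}(t-s)$, so the first piece is $O(t-s)$, the second is $O((t-s)^{1+\alpha})$, the third is $O((t-s)^{2\alpha})$ directly, and $E_{s,t}$ is $O((t-s)^{3\alpha})$. Since $\alpha \in (1/3,1/2]$ forces $2\alpha \leq 1 \leq 1+\alpha \leq 3\alpha$ (with the relevant inequalities holding up to factors of $(t-s)^{1-2\alpha} \leq 1$ because $t-s \leq 1$), all four pieces are uniformly $O((t-s)^{2\alpha})$, which proves $R^P \in C^{2\alpha}$. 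The same decomposition immediately yields $|P_{s,t}| \leq \|Y\|_\infty\|W\|_\alpha (t-s)^\alpha + C(t-s)^{2\alpha}$, giving $P \in C^\alpha$.

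The main conceptual point, which is the only subtle step, is the identification $P' = Y$ rather than something involving $S$: only the term $Y_s W_{s,t}$ in the expansion of $P_{s,t}$ is genuinely of order $(t-s)^\alpha$, while $(S(t-s) - \Id) Y_s W_{s,t}$ and the semigroup-induced correction $(S(t-s) - \Id) P_s$ both gain an extra factor of $t-s$ and can therefore be absorbed into the $2\alpha$-remainder. The routine but necessary bookkeeping consists of tracking the semigroup-dependent constants from Lemmas~\ref{estimate:semigroup} and~\ref{derivativesemigroup} through the estimate, which will also produce an explicit bound on $\|P, P'\|_{D^{2\alpha}_W}$ in terms of $|Y_0|$, $|Y'_0|$, $\|Y,Y'\|_{D^{2\alpha}_W}$, and the rough path norms of $\textbf{W}$, mirroring~\eqref{bounddalphaintegral}.
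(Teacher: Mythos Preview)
Your proof is correct and follows essentially the same route as the paper: both use the semigroup decomposition $P_t-P_s=(S(t-s)-\Id)P_s+\int_s^t S(t-r)Y_r\,\txtd\textbf{W}_r$, invoke Lemma~\ref{derivativesemigroup} to treat $Z^t=S(t-\cdot)Y_\cdot$ as a controlled path, apply the local expansion~\eqref{estimate_g_integral}, and use Lemma~\ref{estimate:semigroup} to absorb the $(S(t-s)-\Id)$ pieces into the $2\alpha$-remainder. One minor point: your chain ``$2\alpha\le 1\le 1+\alpha\le 3\alpha$'' is not literally true for $\alpha<1/2$ (the last inequality fails), but what you actually need---that each of $1,\,1+\alpha,\,3\alpha$ is at least $2\alpha$---does hold, and your parenthetical remark shows you understand this; also, bounding $(S(t-s)-\Id)P_s$ by $C(t-s)$ requires a uniform bound on $|P_s|$, which the paper obtains directly from~\eqref{estimate_g_integral} on $[0,s]$ rather than from the (not-yet-proved) H\"older regularity of $P$.
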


\begin{proof}
Due to Lemma \ref{derivativesemigroup} we can define the integral 
\benn
I_{t}:=\int\limits_{0}^{t} S(t-r) Y_{r} ~\txtd\textbf{W}_{r}.
\eenn
To prove (\ref{integral}) we only show that $R^{I}$ is $2\alpha$-H\"older, 
since the other properties are obvious, i.e., a Gubinelli derivative 
of $I$ is $Y$, which is $\alpha$-H\"older continuous by definition. 
Lemma~\ref{estimate:semigroup} together with (\ref{estimate_g_integral}) 
justify the $\alpha$-H\"older continuity of $I$ itself, since
\begin{align*}
I_{s,t} = (S(t-s) -\mbox{Id})\int\limits_{0}^{\txts}  S(s-r) Y_{r} ~\txtd \textbf{W}
_{r}
+ \int\limits_{s}^{t} S(t-r) Y_{r} ~\txtd\textbf{W}_{r}.
\end{align*}
Consequently, the first term gives us 
\begin{align*}
\Bigg| (S(t-s) -\mbox{Id})\int\limits_{0}^{\txts}  S(s-r) Y_{r} ~\txtd \textbf{W}
_{r} \Bigg| \leq |A|~ |t-s| ~ \txte ^{(t-s)|A|} \left| 
\int\limits_{0}^{\txts} S(s-r)Y_{r} ~\txtd \textbf{W}_{r} \right|
\end{align*}
and the second one
\begin{align*}
\Bigg| \int\limits_{s}^{t} S(t-r)Y_{r} ~\txtd \textbf{W}_{r} \Bigg| 
& \leq |S(t-s)| \|Y\|_{\infty}\|W\|_{\alpha }(t-s)^{\alpha} \\
&+ |S(t-s)| \|Y'\|_{\infty} \|\mathbb{W}\|_{2\alpha} (t-s)^{2\alpha} 
+ C (t-s)^{3\alpha}.
\end{align*}
We now prove the $2\alpha$-H\"older regularity of the remainder. To this aim 
 we compute for $0\leq s \leq t \leq 1$
\begin{align}\label{remainderint}
		|R^{I}_{s,t} |& = | I_{s,t} - I'_{s} W_{s,t} | = 
		\left|  \int\limits_{0}^{t} S(t-r) Y_{r} ~\txtd \textbf{W}_{
			r} - \int\limits_{0}^{\txts} S(s-r) Y_{r} ~\txtd \textbf{W}_{r} 
			- Y_{s} W_{s,t} \right| \\
	& \leq \left|  \int\limits_{0}^{\txts} (S(t-r) - S(s-r)) Y_{r} ~\txtd \textbf{W}
		_{r} \right| + \left| \int\limits_{s}^{t} S(t-r) Y_{r} ~\txtd\textbf{W}_{r} 
		-Y_{s}W_{s,t} \right|.\nonumber
\end{align}	
We start estimating the first term in the previous inequality as
\begin{align*}
\left|  \int\limits_{0}^{\txts} (S(t-r) - S(s-r)) Y_{r} ~\txtd \textbf{W}_{r} 
\right| &\leq | S(t-s) -\mbox{Id}| \left| 
\int\limits_{0}^{\txts} S(s-r)Y_{r} ~\txtd \textbf{W}_{r} \right|\\
& \leq|A| |t-s| e ^{(t-s)|A|} \left| \int\limits_{0}^{\txts} S(s-r)Y_{r} 
~\txtd \textbf{W}_{r} \right|.
\end{align*}	
To estimate the rough integral we apply (\ref{estimate_g_integral}) and obtain 
\begin{align*}
&	\left|  \int\limits_{0}^{\txts} (S(t-r) - S(s-r)) Y_{r} ~\txtd 
\textbf{W}_{r} \right| \\
& \leq |A| ~|t-s|~ e ^{(t-s)|A|}~ \left(  |S(s)Y_{0}W_{0,s}| + |S(s)Y'_{0}\mathbb{W}_{0,s}| 
+ C (\|W \|_{\alpha} \| R^{Z^{\txts}} \|_{2\alpha} + \|\mathbb{W}\|_{2\alpha}
 \|(Z^{\txts})'\|_{\alpha} ) s^{3\alpha} \right)\\
& \leq C[|A|] ~ |t-s|~  \left( |Y_{0}|~ \|W\|_{\alpha} s^{\alpha} 
+|Y'_{0}| ~\|W\|_{2\alpha}s^{2\alpha} + C (\|W \|_{\alpha}~ \| R^{Z^{\txts}} 
\|_{2\alpha} 
+ \|\mathbb{W}\|_{2\alpha} ~\|(Z^{\txts})'\|_{\alpha} ~s^{3\alpha})  \right).
\end{align*}
Plugging in the estimates obtained for $\|R^{Z^{\txts}}\|_{2\alpha}$ and 
$\| (Z^{\txts})' \|_{\alpha}$ in Lemma \ref{derivativesemigroup} we conclude 
that we can find a constant $C=C[|A|]$ such that 
\begin{align*}
&	\left|  \int\limits_{0}^{\txts} (S(t-r) - S(s-r)) Y_{r} ~\txtd \textbf{W}_{r} \right| \\
& \leq C [|A|]~ (|Y_{0}| + |Y'_{0}| +\|Y\|_{\alpha} + 
\|Y'\|_{\alpha}+ \|R^{Y}\|_{2\alpha} )~ (\|W\|_{\alpha} + 
\|\mathbb{W}\|_{2\alpha} )~ |t-s|.
\end{align*}
We now estimate the second term as 
follows. Using again (\ref{estimate_g_integral}) we infer that
\begin{align*}
&\left| \int\limits_{s}^{t} S(t-r) Y_{r} ~\txtd \textbf{W}_{r} - Y_{s}W_{s,t} \right| \\
& \leq |S(t-s)Y_{s}W_{s,t} -Y_{s}W_{s,t} | + |S(t-s)Y'_{s}\mathbb{W}_{s,t}| 
+ C ( \|W\|_{\alpha} \|R^{Z^{t}}\|_{2\alpha} + \|\mathbb{W}\|_{2\alpha} \| 
(Z^{t})'\|_{\alpha}  ) |t-s|^{3\alpha}\\
& \leq C[|A|] ~\Big( \|Y\|_{\infty} ~\|W\|_{\alpha}~ |t-s|^{1+\alpha} 
+  \|Y'\|_{\infty} ~\|\mathbb{W}\|_{2\alpha}~|t-s|^{2\alpha} \Big) \\
& + C ~( \|W\|_{\alpha} ~\|R^{Z^{t}}\|_{2\alpha} + \|\mathbb{W}\|_{2\alpha}  
~\|(Z^{t})'\|_{\alpha}  ) ~|t-s|^{3\alpha}.
\end{align*}
Consequently, we have 
\begin{align*}
&\left| \int\limits_{s}^{t} S(t-r) Y_{r} ~\txtd \textbf{W}_{r} - Y_{s}W_{s,t} \right| \\
&\leq C[|A|]~ (|Y_{0}| + |Y'_{0}| + \|Y\|_{\alpha} + \|Y'\|_{\alpha} 
+ \|R^Y\|_{2\alpha} ) (\|W\|_{\alpha} + \|\mathbb{W} \|_{2\alpha}  ) |t-s|^{2\alpha},
\end{align*}
which proves the required regularity of the remainder of the controlled rough integral. 
Putting all these estimates together and recalling that
\begin{align*}
\|Y\|_{\alpha} \leq C (1+\|W\|_{\alpha}) (|Y'_{0}| + \|Y,Y'\|_{D^{2\alpha}_{W}} ),		
\end{align*}
we finally infer that
\begin{align}
&	\left\|\int\limits_{0}^{\cdot} S(\cdot - r)Y_{r}~\txtd \textbf{W}_{r}, Y 
\right\|_{D^{2\alpha}_{W}}\nonumber\\
& \leq \|Y\|_{\alpha} + C[|A|] (|Y_{0}| + |Y'_{0}| + \|Y,Y'\|_{D^{2\alpha}_{W}} ) 
(1+\|W\|_{\alpha})(\|W\|_{\alpha} + \|\mathbb{W}\|_{2\alpha} ).\label{if}
\end{align}
This finishes the proof.
\end{proof}

Keeping the previous computation in mind, one can easily derive that 
for $(Y,Y')\in D^{2\alpha}_{W}([0,1];\cX)$ and $G\in 
C^{3}_{\txtb}(\cX;\mathcal{L}(\cV,\cX))$ the following estimate holds true
\begin{align}
\label{estg}
& \left\|\int\limits_{0}^{\cdot} S(\cdot - r)G(Y_{r})~\txtd \textbf{W}_{r}, 
G(Y) \right\|_{D^{2\alpha}_{W}} \\
& \leq \| G (Y)\|_{\alpha} + C [|A|] (|G(Y_{0})| + |(G(Y))'_{0}| + 
\|G(Y),(G(Y))' \|_{D^{2\alpha}_{W}} ) (1+ \|W\|_{\alpha}) (\|W\|_{\alpha} 
+ \|\mathbb{W}\|_{2\alpha}). \nonumber
\end{align}
Note that
\begin{align*}
\|G(Y) \|_{\alpha}  \leq \|\txtD G\|_{\infty} \| Y\|_{\alpha}
\leq \|G\|_{C^{1}_{\txtb}} \|Y\|_{\alpha}
\end{align*}
and
\begin{align*}
|(G(Y))'_{0}|= |\txtD G(Y_{0})Y'_{0}|\leq \|G\|_{C^{2}_{\txtb}}.
\end{align*}
Now, one uses \cite[Lem.~7.3]{FritzHairer} which gives a meaning of the 
operation of composition of a controlled rough path with a smooth function 
together with the estimate 
\begin{align}
\label{comp:smooth:funct}
\|G(Y),(G(Y))' \|_{D^{2\alpha}_{W}}&= \|G(Y), DG(Y)Y' \|_{D^{2\alpha}_{W}}\nonumber\\
& \leq C \|G\|_{C^{2}_{\txtb}} M \left(|Y'_{0}| + \|Y,Y'\|_{D^{2\alpha}_{W}}\right ) 
(1+ \|W\|_{\alpha})^{2},
\end{align}
for $(Y,Y')\in D^{2\alpha}_{W}([0,1];\cX)$ with $|Y'_{0}| 
+ \|Y,Y'\|_{D^{2\alpha}_{W}}\leq M$, for a positive constant $M$.\medskip

The following result (\cite[Thm.~7.5]{FritzHairer}) provides explicit estimates for the difference of two controlled rough 
paths. For the sake of completeness we sketch the main ideas of the proof, since similar arguments will be required in Section~\ref{cut}.
\begin{lemma}
\label{differ} Let $Y, \widetilde{Y}\in D^{2\alpha}_{W}([0,1];\cX)$ 
with $|Y_{0}'| + \|Y,Y'\|_{D^{2\alpha}_{W}}\leq M$ and $|\widetilde{Y}'_{0}| 
+\|\widetilde{Y},\widetilde{Y}'\|_{D^{2\alpha}_{W}}\leq M$. Then, there 
exists a constant $C=C[\alpha,\|W\|_{\alpha}]$, such that the estimate
	\begin{align*}
&	\|G(Y)- G(\widetilde{Y}), (G(Y))' -(G(\widetilde{Y}))' \|_{D^{2\alpha}_{W}}
\\
&\leq C M^{2} \|G\|_{C^{3}_{\txtb}}  (|Y_{0}-\widetilde{Y}_{0}| 
+ |Y'_{0} -\widetilde{Y}'_{0}| + 
\| Y-\widetilde{Y}, Y'-\widetilde{Y}'\|_{D^{2\alpha}_{W}} )
	\end{align*}
is valid.
\end{lemma}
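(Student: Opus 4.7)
\emph{Approach.} The plan is to mirror the proof of the composition estimate \eqref{comp:smooth:funct}, but now tracking differences. Using $(G(Y))' = \txtD G(Y) Y'$ and Taylor's theorem with integral remainder, one has the explicit representations
\[
R^{G(Y)}_{s,t} = \txtD G(Y_s) R^Y_{s,t} + \int_0^1 (1-\theta)\, \txtD^2 G(Y_s + \theta Y_{s,t})(Y_{s,t}, Y_{s,t}) \, \txtd\theta,
\]
and analogously for $\widetilde Y$. The desired estimate then reduces to bounding $\|\txtD G(Y) Y' - \txtD G(\widetilde Y) \widetilde Y'\|_\alpha$ and $\|R^{G(Y)} - R^{G(\widetilde Y)}\|_{2\alpha}$ separately.

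\emph{Gubinelli derivative.} I split
\[
\txtD G(Y) Y' - \txtD G(\widetilde Y) \widetilde Y' = \bigl(\txtD G(Y) - \txtD G(\widetilde Y)\bigr) Y' + \txtD G(\widetilde Y)(Y' - \widetilde Y'),
\]
apply the Leibniz-type estimate $\|fg\|_\alpha \leq \|f\|_\infty \|g\|_\alpha + \|f\|_\alpha \|g\|_\infty$ to each summand, and Taylor-expand $\txtD G(Y) - \txtD G(\widetilde Y) = \int_0^1 \txtD^2 G(\widetilde Y + \theta(Y - \widetilde Y))(Y - \widetilde Y) \, \txtd\theta$. This produces $\|\txtD^2 G\|_\infty$ and $\|\txtD^3 G\|_\infty$ contributions, the latter appearing when taking the $\alpha$-H\"older seminorm of that integral, since $\txtD^2 G$ then gets differenced at two base points. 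Combining with \eqref{esthoeldernormy} and the a priori bounds, every H\"older norm of $Y, \widetilde Y, Y', \widetilde Y'$ is controlled by $CM(1+\|W\|_\alpha)$, contributing one factor of $M$; the remaining $M$ appears when these terms are paired with $Y'$ or $\widetilde Y'$.

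\emph{Remainder.} Subtracting the Taylor expressions produces three types of summands: $(\txtD G(Y_s) - \txtD G(\widetilde Y_s)) R^Y_{s,t}$, $\txtD G(\widetilde Y_s)(R^Y_{s,t} - R^{\widetilde Y}_{s,t})$, and the difference $Q_{s,t}$ of the two quadratic integrals. The first two are straightforward since $R^Y$ and $R^Y - R^{\widetilde Y}$ are already $2\alpha$-H\"older, and $\txtD G(Y_s) - \txtD G(\widetilde Y_s)$ is Lipschitz in $|Y_s - \widetilde Y_s|$ via $\|\txtD^2 G\|_\infty$. For $Q_{s,t}$, the plan is to telescope by adding and subtracting $\txtD^2 G(Y_s + \theta Y_{s,t})(Y_{s,t}, \widetilde Y_{s,t})$ and $\txtD^2 G(Y_s + \theta Y_{s,t})(\widetilde Y_{s,t}, \widetilde Y_{s,t})$, producing three pieces in each of which exactly one factor carries a difference between $Y$ and $\widetilde Y$; the last piece differences $\txtD^2 G$ at two base points and is controlled via $\|\txtD^3 G\|_\infty$. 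Each bilinear pairing yields $(t-s)^{2\alpha}$ with one factor $\|Y\|_\alpha + \|\widetilde Y\|_\alpha$ (giving an $M$) and one factor of $\|Y - \widetilde Y\|_\infty + \|Y - \widetilde Y\|_\alpha$, which by \eqref{esthoeldernormy} is bounded by $C(1+\|W\|_\alpha)$ times the right-hand side of the claim, multiplied by a further $M$ through the telescoping.

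\emph{Main obstacle.} The only non-routine step is organizing the decomposition of $Q_{s,t}$ so that exactly one $Y - \widetilde Y$-factor appears per summand; a naive triple product of H\"older seminorms would produce $M^3$ rather than $M^2$. This telescoping also forces the use of $\|G\|_{C^3_\txtb}$ rather than $\|G\|_{C^2_\txtb}$ as in \eqref{comp:smooth:funct}, since $\txtD^2 G$ must itself be differenced. All repeated $(1+\|W\|_\alpha)$ factors arising from \eqref{esthoeldernormy} are absorbed into the final constant $C = C[\alpha, \|W\|_\alpha]$.
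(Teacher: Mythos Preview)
Your proof is correct but follows a genuinely different route from the paper's. The paper avoids all direct H\"older estimates by first writing
\[
G(Y)-G(\widetilde Y)=\frak K\,\frak H,\qquad \frak H:=Y-\widetilde Y,\quad \frak K:=g(Y,\widetilde Y),\quad g(x,y):=\int_0^1 \txtD G(rx+(1-r)y)\,\txtd r,
\]
and then applying two black-box results from \cite{FritzHairer}: the composition lemma~\eqref{comp:smooth:funct} to the $C^2_\txtb$ map $g$ (which is where $\|G\|_{C^3_\txtb}$ enters, since $\|g\|_{C^2_\txtb}\leq\|G\|_{C^3_\txtb}$) gives $(\frak K,\frak K')\in D^{2\alpha}_W$ with the right bound, and the product estimate \cite[Cor.~7.4]{FritzHairer} then controls $(\frak K\frak H,(\frak K\frak H)')$ by the product of the two full norms. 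No Taylor remainder or telescoping of $R^{G(Y)}$ is ever written down.

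Your approach instead unpacks the definitions of $(G(Y))'$ and $R^{G(Y)}$ explicitly and telescopes the differences by hand. This is more laborious but entirely self-contained: you do not need to invoke the product lemma for controlled paths, and the appearance of $\|\txtD^3 G\|_\infty$ is visible directly in the step where $\txtD^2 G$ must be differenced. The paper's factorization $\frak K\frak H$ is the cleaner route and makes the $M^2$ transparent (one $M$ from the composition bound on $\frak K$, one from the size of $\frak H$), whereas in your argument the bookkeeping to avoid $M^3$ in the quadratic remainder term $Q_{s,t}$ is, as you note, the delicate point---though your telescoping handles it correctly.
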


\begin{proof}
Fix $x,y\in \cX$. Then it holds
\begin{align*}
	G(x) - G(y)= (x-y) \int\limits_{0}^{1} \txtD G (rx+(1-r)y)~\txtd r.
\end{align*}
Therefore, we define for $Y,\widetilde{Y}\in D^{2\alpha}_{W}$ and $s\in[0,1]$
\benn
\frak{K}_{s} :=g(Y_{s},\widetilde{Y}_{s}) \quad \text{and}\quad \frak{H}_{s} 
:= Y_{s} -\widetilde{Y}_{s},
\eenn
where we set $$g(x,y):=\int\limits_{0}^{1} \txtD G (rx+(1-r)y)~\txtd r.$$ Consequently, we 
have that $G(Y)-G(\widetilde{Y})=\frak{K}\frak{H}$. First of all, we infer 
from \cite[Lem.~7.3]{FritzHairer} that $(\frak{K},\frak{K}')\in D^{2\alpha}_{W}$, where
\begin{align*}
\frak{K}'= \txtD_{x} g(Y,\widetilde{Y})Y' + \txtD_{y} g (Y,\widetilde{Y})\widetilde{Y}'.
\end{align*}
Again the result \cite[Lem.~7.3]{FritzHairer} entails
\begin{align}
	\|\frak{K},\frak{K}'\|_{D^{2\alpha}_{W}} & \leq C \| g\|_{C^{2}_{\txtb}} M (|Y'_{0}| 
	+ |\widetilde{Y}'_{0}| + \|Y,Y' \|_{D^{2\alpha}_{W}} + \|\widetilde{Y},
	\widetilde{Y}'\|_{D^{2\alpha}_{W}} ) (1+\|W\|_{\alpha})^{2} \label{g}\\
	& \leq C \|G\|_{C^{3}_{\txtb}} M (|Y'_{0}| + |\widetilde{Y}'_{0}| + 
	\|Y,Y' \|_{D^{2\alpha}_{W}} + 
	\|\widetilde{Y},\widetilde{Y}'\|_{D^{2\alpha}_{W}} ) 
	(1+\|W\|_{\alpha})^{2}\label{one:g}.
\end{align}
Furthermore, \cite[Cor.~7.4]{FritzHairer} gives us that 
$(\frak{K}\frak{H}, (\frak{K}\frak{H})')\in D^{2\alpha}_{W}$ and $(\frak{K}\frak{H})'
= \frak{K}'\frak{H} + \frak{K} \frak{H}'$. Moreover, we obtain
\be
\label{eq:KHest}
\|\frak{K}\frak{H}, (\frak{K}\frak{H})' \|_{D^{2\alpha}_{W}} \leq C (|\frak{K}_{0}| 
+ |\frak{K}'_{0}| + 
\|\frak{K},\frak{K}'\|_{D^{2\alpha}_{W}} )
 (|\frak{H}_{0}| + |\frak{H}'_{0}| + \|\frak{H},\frak{H}'\|_{D^{2\alpha}_{W}} ).
\ee
Using~\eqref{one:g} in~\eqref{eq:KHest} we derive
\begin{align*}
&	\| G(Y) - G(\widetilde{Y}), (G(Y) - G(\widetilde{Y}))'\|_{D^{2\alpha}_{W}}\\ 
&\leq C\|G\|_{C^{3}_{\txtb}}  M^{2} (|Y_{0}-\widetilde{Y}_{0}| 
+ |Y'_{0} -\widetilde{Y}'_{0}| +  
\| Y- \widetilde{Y}, Y' - \widetilde{Y}'\|_{D^{2\alpha}_{W}} ),
\end{align*}
where again $C=C[\alpha,\|W\|_{\alpha}]$.
\end{proof}

Putting~\eqref{estg} and Lemma~\ref{differ} together, we get the following result:

\begin{lemma}
\label{gdiffusion}
Let $Y, \widetilde{Y}\in D^{2\alpha}_{W}([0,1];\cX)$, with $Y_0=\tilde{Y}_0$, $Y'_0=\tilde{Y}'_0$, $|Y_{0}| 
+ \|Y,Y'\|_{D^{2\alpha}_{W}}\leq M$ and $|\widetilde{Y}_{0}| 
+ \|\widetilde{Y},\widetilde{Y}'\|_{D^{2\alpha}_{W}}\leq M$. Further let
$G\in C^{3}_{\txtb}(\cX;\mathcal{L}(\cV,\cX))$. Then the following estimate holds true
\begin{align*}
&	\left\| \int\limits_{0}^{\cdot} S(\cdot - r) (G(Y_{r}) -G(\widetilde{Y}_{r} )) 
~\txtd\textbf{W}_{r}, G (Y) - G (\widetilde{Y}) \right\|_{D^{2\alpha}_{W}}\\ 
&\leq \|G(Y_{\cdot}) - G (\widetilde{Y}_{\cdot}) \|_{\alpha} + C[|A|]~
\|G\|_{C^{3}_{\txtb}}M^{2} \|Y-\widetilde{Y}, Y'-\widetilde{Y}'\|_{D^{2\alpha}_{W}} 
(1+\|W\|_{\alpha})(\|W\|_{\alpha}+\|\mathbb{W}\|_{2\alpha}).
\end{align*}
\end{lemma}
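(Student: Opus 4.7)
The proof is essentially a bookkeeping exercise that combines the estimate~\eqref{estg} for the rough convolution integral with Lemma~\ref{differ} for differences. The plan is to first observe that by linearity of Gubinelli's integral, the object of interest equals
\benn
\int_{0}^{\cdot} S(\cdot - r)\,H_r\,\txtd\textbf{W}_r,\qquad H := G(Y) - G(\widetilde Y),
\eenn
with Gubinelli derivative $H' = (G(Y))' - (G(\widetilde Y))'$. Note that $(H,H')$ is again a controlled rough path because the class $D^{2\alpha}_{W}$ is a vector space.

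First I would apply Lemma~\ref{integralgub} (more precisely, the estimate~\eqref{if}) directly to $(H,H')$ in place of $(Y,Y')$. This yields
\begin{align*}
& \left\| \int_{0}^{\cdot} S(\cdot - r)\,H_r\,\txtd \textbf{W}_r,\,H \right\|_{D^{2\alpha}_{W}} \\
& \leq \|H\|_{\alpha} + C[|A|]\bigl(|H_0| + |H'_0| + \|H,H'\|_{D^{2\alpha}_{W}}\bigr)(1+\|W\|_{\alpha})\bigl(\|W\|_{\alpha}+\|\mathbb{W}\|_{2\alpha}\bigr).
\end{align*}
The first term on the right is precisely $\|G(Y_{\cdot})-G(\widetilde Y_{\cdot})\|_{\alpha}$, which is kept as stated in the conclusion.

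Next I would invoke Lemma~\ref{differ} to control the bracketed factor. Lemma~\ref{differ} directly bounds $\|H,H'\|_{D^{2\alpha}_{W}}$ by $C M^2 \|G\|_{C^3_{\txtb}}\bigl(|Y_0-\widetilde Y_0|+|Y'_0-\widetilde Y'_0|+\|Y-\widetilde Y,Y'-\widetilde Y'\|_{D^{2\alpha}_W}\bigr)$. The pointwise terms $|H_0|=|G(Y_0)-G(\widetilde Y_0)|$ and $|H'_0|=|\txtD G(Y_0)Y'_0-\txtD G(\widetilde Y_0)\widetilde Y'_0|$ are handled by an elementary mean value estimate and the product rule: boundedness of $\txtD G$ gives $|H_0|\leq \|G\|_{C^1_{\txtb}}|Y_0-\widetilde Y_0|$, and adding and subtracting $\txtD G(Y_0)\widetilde Y'_0$ together with boundedness of $\txtD^2 G$ and the bound $|\widetilde Y'_0|\leq M$ gives $|H'_0|\leq \|G\|_{C^2_{\txtb}}(|Y_0-\widetilde Y_0|+|Y'_0-\widetilde Y'_0|)\cdot(1+M)$. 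Up to absorbing factors into $M^{2}$ and the constant $C[|A|]$, these contributions fit into the same template as the $D^{2\alpha}_{W}$-term.

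Finally, I would collect everything and factor out $\|G\|_{C^3_{\txtb}}M^2$ and $(1+\|W\|_{\alpha})(\|W\|_{\alpha}+\|\mathbb{W}\|_{2\alpha})$, arriving at the claimed bound. The main obstacle is purely book-keeping: one must be careful that all initial-value contributions $|Y_0-\widetilde Y_0|$ and $|Y'_0-\widetilde Y'_0|$ are already encoded in the quantity $\|Y-\widetilde Y,Y'-\widetilde Y'\|_{D^{2\alpha}_W}$ on the right-hand side of the statement (this is standard since, by the equivalent norm on $\textbf{D}^{2\alpha}_W$ mentioned after Definition~\ref{def:crp}, these initial values are controlled by the difference of the paths in the $\textbf{D}^{2\alpha}_W$-norm), and that all polynomial dependences on $\|W\|_\alpha$ and $\|\mathbb{W}\|_{2\alpha}$ collapse to the single factor $(1+\|W\|_\alpha)(\|W\|_\alpha+\|\mathbb{W}\|_{2\alpha})$ after absorbing lower-order terms into $C[|A|]$.
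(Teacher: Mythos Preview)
Your approach is correct and is exactly what the paper does: it states the lemma as an immediate consequence of putting together the convolution estimate~\eqref{if}/\eqref{estg} with Lemma~\ref{differ}, without writing out any further details. Your expanded bookkeeping (treating $H=G(Y)-G(\widetilde Y)$ as a controlled path, applying~\eqref{if}, then Lemma~\ref{differ}, and finally absorbing the initial-value terms) fills in precisely what the paper leaves implicit.
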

Recall that according to (\ref{esthoeldernormy}) one has 
\begin{align*}
\| G(Y)- G(\widetilde{Y})\|_{\alpha} \leq 
C(1+\|W\|_{\alpha}) \| G(Y)-G(\widetilde{Y}), 
(G(Y)- G(\widetilde{Y}))'\|_{D^{2\alpha}_{W}}.
\end{align*}
From this one can further infer that
\begin{align}
\label{finalestg}
&	\left\| \int\limits_{0}^{\cdot} S(\cdot - r) 
(G(Y_{r}) -G(\widetilde{Y}_{r} )) ~\txtd \textbf{W}_{r}, 
G (Y) - G (\widetilde{Y}) \right\|_{D^{2\alpha}_{W}}\nonumber\\ 
&\leq  C[|A|] \|G\|_{C^{3}_{\txtb}} M^{2} (1+\|W\|_{\alpha})(\|W\|_{\alpha}
+\|\mathbb{W}\|_{2\alpha}) \|Y-\widetilde{Y}, Y'-\widetilde{Y}'\|_{D^{2\alpha}_{W}}.
\end{align}
In addition to the last estimate, we also need another result to be employed 
later on in order to estimate the terms containing the initial condition and 
the drift of a rough differential equation in $D^{2\alpha}_{W}$. In this case, 
the computation simplifies since the Gubinelli derivative no longer plays a role.
We have:

\begin{lemma}
\label{drift} Let $\xi \in \cX$ and let $F:\cX \to \cX$ be Lipschitz continuous. 
Then for $(Y,Y')\in D^{2\alpha}_{W}([0,1];\cX)$, a Gubinelli derivative is given by
\begin{align}\label{gderivative:drfift}
\left( S(\cdot) \xi + \int\limits_{0}^{\cdot} S(\cdot-r)F(Y_{r})~\txtd r  \right)'= 0.
\end{align}
Moreover, we have the estimate
\begin{align}
\label{fdrift}
\left\|  S(\cdot)\xi + \int\limits_{0}^{\cdot} S(\cdot-r)F(Y_{r})~\txtd r, 
0 \right\|_{D^{2\alpha}_{W}} \leq  C[|A| ](|\xi|+|F(Y_{0})| + L_{F} \|Y\|_{\alpha} ),
\end{align}
where $L_{F}$ denotes the Lipschitz constant of $F$. Furthermore, for two controlled 
rough paths $(Y, Y')$ and $(\widetilde{Y}, \widetilde{Y}')$ with $Y_{0}=\xi$ and $\widetilde{Y}_{0}=\widetilde{\xi}$, one obtains
\begin{align}\label{diff:drift}
\left\| S(\cdot) (\xi-\widetilde{\xi}) + \int\limits_{0}^{\cdot}S(\cdot - r) 
(F(Y_{r})- F(\widetilde{Y}_{r} )) ~\txtd r  \right\|_{2\alpha} &\leq C [| A |]  
(|\xi-\widetilde{\xi}| + L_{F} \|Y-\widetilde{Y}\|_{\infty} ).
\end{align}
\end{lemma}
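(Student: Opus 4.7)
The plan is to observe that the drift part $P_t := S(t)\xi + \int_0^t S(t-r)F(Y_r)\,\txtd r$ is genuinely smoother than $\alpha$-H\"older in $t$ (in fact, it is Lipschitz), so choosing the Gubinelli derivative to be identically zero is natural: the whole increment $P_{s,t}$ then plays the role of the remainder $R^{P}$, and we only need to verify the $2\alpha$-H\"older bound on $[0,1]$. The key computation is the telescoping decomposition
\begin{align*}
P_t - P_s \;=\; (S(t-s)-\textnormal{Id})\,S(s)\,\xi \;+\; (S(t-s)-\textnormal{Id})\!\int_0^s \!S(s-r)F(Y_r)\,\txtd r \;+\; \int_s^t \!S(t-r)F(Y_r)\,\txtd r,
\end{align*}
to which Lemma~\ref{estimate:semigroup} is applied termwise, together with the Lipschitz bound $|F(Y_r)| \le |F(Y_0)| + L_F|Y_r - Y_0| \le |F(Y_0)| + L_F \|Y\|_\alpha r^\alpha$.

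The first term is bounded by $|A|e^{2|A|}|\xi|(t-s)$, the second by $C[|A|](|F(Y_0)|+L_F\|Y\|_\alpha)(t-s)$ after integrating the elementary bound on $|F(Y_r)|$ against $e^{|A|}$, and the third by $e^{|A|}(|F(Y_0)| + L_F\|Y\|_\alpha)(t-s)$. Since $\alpha \in (1/3,1/2]$ gives $2\alpha \le 1$, one has $(t-s) \le (t-s)^{2\alpha}$ on $[0,1]$, so all three bounds combine to $|P_{s,t}| \le C[|A|](|\xi|+|F(Y_0)|+L_F\|Y\|_\alpha)(t-s)^{2\alpha}$. This simultaneously verifies~\eqref{gderivative:drfift} (as $R^P_{s,t}=P_{s,t}$ is $2\alpha$-H\"older, so $(P,0)$ is indeed a controlled rough path) and yields~\eqref{fdrift} upon taking the $2\alpha$-H\"older seminorm, which equals $\|P,0\|_{D^{2\alpha}_W}$ by the definition~\eqref{norm}.

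The difference estimate~\eqref{diff:drift} is proved by applying the exact same three-term decomposition to $D_t := S(t)(\xi-\widetilde\xi) + \int_0^t S(t-r)(F(Y_r)-F(\widetilde Y_r))\,\txtd r$. The Lipschitz continuity of $F$ gives $|F(Y_r)-F(\widetilde Y_r)| \le L_F \|Y-\widetilde Y\|_\infty$ uniformly in $r$, and the semigroup bounds from Lemma~\ref{estimate:semigroup} yield $|D_t - D_s| \le C[|A|](|\xi-\widetilde\xi| + L_F\|Y-\widetilde Y\|_\infty)(t-s)$; dividing by $(t-s)^{2\alpha}$ and using $(t-s)^{1-2\alpha} \le 1$ on $[0,1]$ delivers the claim.

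There is no conceptual hurdle here; the only mild obstacle is bookkeeping the $A$-dependent constants so that they can be absorbed into a single $C[|A|]$ uniformly over $t \in [0,1]$, exactly the role already played by the convention fixed after Lemma~\ref{estimate:semigroup}. In particular, no rough integral estimates are needed, which is precisely the payoff of choosing the Gubinelli derivative to vanish.
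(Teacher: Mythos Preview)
Your proof is correct and follows essentially the same approach as the paper: the same three-term telescoping of $P_t - P_s$ via $(S(t-s)-\textnormal{Id})$, the same use of Lemma~\ref{estimate:semigroup}, and the same Lipschitz bound $|F(Y_r)|\le |F(Y_0)|+L_F\|Y\|_\alpha$ to control $\|F(Y)\|_\infty$. The paper's argument for~\eqref{diff:drift} likewise reduces to the observation that the Lipschitz estimate in terms of $\|F(Y)\|_\infty$ immediately gives the difference bound via $|F(Y_r)-F(\widetilde Y_r)|\le L_F\|Y-\widetilde Y\|_\infty$, exactly as you wrote.
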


\begin{proof}
	In order to prove the statement we show that the term in~\eqref{gderivative:drfift} is $2\alpha$-H\"older regular. Letting $0\leq s < t \leq 1$ we first verify the regularity of the term containing the initial condition. Since we are working on finite-dimensional spaces, we immediately infer that
\begin{align*}
|(S(t)-S(s))\xi |\leq |S(s) |~ |S(t-s)-\mbox{Id}|~ |\xi| \leq e ^{s|A|}~ |A|  
~\txte^{(t-s)|A|} ~(t-s)~|\xi|,
\end{align*}
which gives us
\begin{align}\label{2alpha:abed}
\|S(\cdot{})\xi\|_{2\alpha} \leq C[|A|]~ |\xi| .
\end{align}
For the convolution with the nonlinear drift term, we obtain
\begin{align*}
& \left|\int\limits_{0}^{t} S(t-r)F(Y_{r}) ~\txtd r - \int\limits_{0}^{\txts}
S(s-r)F(Y_{r})~\txtd r\right|\\
& \leq \left|\int\limits_{0}^{\txts} (S(t-r) - S(s-r ))F(Y_{r}) ~\txtd r \right| 
+ \left|\int\limits_{s}^{t} S(t-r)F(Y_{r})~\txtd r \right|.
\end{align*}
This leads to 
\begin{align*}
  \left|\int\limits_{0}^{\txts} (S(t-r) - S(s-r ))F(Y_{r}) ~\txtd r \right| &\leq 
	|S(t-s)-\mbox{Id}|~ \int\limits_{0}^{\txts} |S(s-r) |~ |F(Y_{r})|~\txtd r\\
  & \leq |A|~ (t-s) ~\txte^{(t-s)|A|} ~\|F(Y)\|_{\infty}~ \int\limits_{0}^{\txts}
	\txte^{(s-r)|A|}~ \txtd r\\
  & \leq  C[|A|]~ (|F(Y_{0})| + L_{F}\|Y\|_{\alpha})(t-s).
\end{align*}
It is then straightforward to observe that
\begin{align*}
\left| \int\limits_{s}^{t} S(t-r) F(Y_{r})~\txtd r \right| 
&\leq C[|A|] (|F(Y_{0})| + L_{F}\|Y\|_{\alpha} ) (t-s),
\end{align*}
which implies
\begin{align*}
\left\| S(\cdot{})\xi + \int\limits_{0}^{\cdot} S(\cdot- r)F(Y_{r})~\txtd r
\right\|_{2\alpha} \leq  C[|A| ](|\xi|+|F(Y_{0})| + L_{F} \|Y\|_{\alpha} ).  
\end{align*}
Using that 
\begin{align*}
\left\| S(\cdot{})\xi + \int\limits_{0}^{\cdot} S(\cdot- r)F(Y_{r})~\txtd r
\right\|_{2\alpha} \leq  C[|A| ] (|\xi| + \|F(Y)\|_{\infty} )
\end{align*}
taking two controlled 
rough paths $(Y, Y')$ and $(\widetilde{Y}, \widetilde{Y}')$ with $Y_{0}=\xi$ and $\widetilde{Y}_{0}=\widetilde{\xi}$, one obtains~\eqref{diff:drift}.
\end{proof}

Regarding all these preliminary results one can then 
prove easily, in analogy to~\cite[Thm.~8.4]{FritzHairer}, 
existence and uniqueness results for~\eqref{sde1}. A local solution is derived 
by a fixed-point argument, which can be extended to a global one by a standard 
concatenation argument~\cite[Ch.~8]{FritzHairer}.

\begin{theorem}\label{globalexistence}
	There exists a unique element $(U,U')\in D^{2\alpha}_W$ such that $U'=G(U)$ and for $t\in[0,1]$
\begin{equation}\label{solution}
U_t= S(\cdot)\xi+ \int\limits_{0}^{\cdot} 
S(\cdot - r) F(U_{r}) ~\txtd r + \int\limits_{0}^{\cdot} 
S(\cdot - r) G(U_{r}) ~\txtd\textbf{W}_{r}.
\end{equation}
\end{theorem}

\begin{remark}
	\begin{itemize}
		\item [1)] The results stated here for $\alpha$-H\"older continuous 
		paths, carry over to $p$-variation paths, see \cite[Sec.~5.3]{FritzVictoir}, 
		\cite[Ch.~12]{FritzVictoir} or~\cite{CoutinLejay}.
		\item [2)] We stress the fact that the solution of~\eqref{sde1} is global-in-time. The previous results were stated only for $t\in[0,1]$ in order to derive uniform explicit estimates of the constants depending on the group $S$, the coefficients $F$ and $G$ and the random input $\textbf{W}$. These will be required in Section~\ref{lpm}. 
	\end{itemize}
\end{remark}

All the previous results can be formulated also in the context of regularity structures~\cite{Hairer}. A {\em controlled rough path} corresponds in the language of regularity structures to a {\em modelled distribution}~\cite[Sec.~13.2.2]{FritzHairer} and~\cite{Brault}.
For the sake of brevity we chose to work within the rough path framework.

\subsection{A cut-off technique}
\label{cut}
We conclude this subsection pointing out some results which are required 
for the computation of invariant manifolds for~\eqref{sde1}. In order to derive existence of invariant manifolds by means of fixed-point arguments, certain conditions involving the growth of the nonlinear terms are required (see e.g~\cite{BloemkerWang, 
	CaraballoDuanLuSchmalfuss, GarridoLuSchmalfuss}). Since these 
may be too restrictive and require a certain smallness of the nonlinearities, the first step is to introduce an appropriate 
cut-off technique in order to truncate these nonlinearities outside a ball around the origin. In contrast to the classical cut-off techniques, where one performs the truncation argument at the level of vector fields, we work here at the level of paths. More precisely, we employ an argument that truncates the controlled rough path norm of a pair $(Y,Y')$.\\

We additionally impose the following restrictions on the drift and diffusion coefficients:
\begin{description}
	\item [(\textbf{F})\label{f}] $F:\cX\to \cX\in C^{1}_{\txtb}$ is Lipschitz continuous with $F(0)=\txtD F(0)=0$;
	\item [(\textbf{G})\label{gi}] $G:\cX\to\mathcal{L}(\cV,\cX)\in C^{3}_{\txtb}$ is Lipschitz continuous with $G(0)=\txtD G(0)=\txtD^2 G(0)=0$.
\end{description}
We recall that the notation $\|\cdot\|_{\textbf{D}^{2\alpha}_{W}([0,1];\cX)}$ (short $\|\cdot\|_{\textbf{D}^{2\alpha}_{W}}$) stands for
 the norm on $D^{2\alpha}_{W}([0,1];\cX)$. Furthermore, we write $|||\cdot|||_{\alpha}$ for the $\alpha$-H\"older norm of an arbitrary path. For an element $(Y,Y')\in D^{2\alpha}_{W}([0,1];\cX)$, our aim is to truncate its $\|\cdot\|_{\textbf{D}^{2\alpha}_{W}}$ norm using a smooth (three-times differentiable with bounded derivatives) Lipschitz cut-off function.
This will be done as follows. \\
Let $\chi:D^{2\alpha}_{W}([0,1];\cX)\to D^{2\alpha}_{W}([0,1];\cX)$ be defined as 
\begin{align*}
\chi(Y):= \begin{cases} Y, & \|Y,Y'\|_{\textbf{D}^{2\alpha}_{W}}\leq 1/2\\
0, & \|Y,Y'\|_{\textbf{D}^{2\alpha}_{W}} \geq 1.
\end{cases}
\end{align*}
Such a function can be constructed as
\begin{align*}
\chi(Y)= Y f \Big(\|Y,Y'\|_{\textbf{D}^{2\alpha}_{W}}\Big), 
\end{align*}	
where $f:\mathbb{R}_{+}\to[0,1]$ is a $C^{3}_{\txtb}$ Lipschitz cut-off function. For instance $f$ can be obtained by applying a smooth convolution to
\begin{align*}
\tilde{f}(x):=\begin{cases}
1, &x\leq 1/2\\
2- 2x, &x\in(1/2,1)\\
0, &x\geq 1.
\end{cases}
\end{align*}
Another example in this sense is given by
\begin{align*}
\tilde{f}(x):=\begin{cases}
e^{-1/x}, &x>0\\
0, &x\leq 0
\end{cases}
\end{align*}
and
\begin{align*}
f(x):=\frac{\tilde{f}(1-x)}{\tilde{f}(x-1/2) + \tilde{f}(1-x) }.
\end{align*}
Using that $\chi(Y)=Y f (\|Y,Y'\|_{\textbf{D}^{2\alpha}_{W}})$ 
we obtain
\begin{align*}
(\chi(Y))' = Y'  f (\|Y,Y'\|_{\textbf{D}^{2\alpha}_{W}}),
\end{align*}
since $f(\|Y,Y'\|_{\textbf{D}^{2\alpha}_{W}})$ is a {\em constant with respect to time}, therefore we can choose its Gubinelli derivative to be zero.
This construction indicates that
\begin{align*}
(\chi(Y), (\chi(Y))') = \begin{cases}
(Y,Y') &: \|Y,Y'\|_{\textbf{D}^{2\alpha}_{W}} \leq 1/2;\\
0 & : \|Y,Y'\|_{\textbf{D}^{2\alpha}_{W}} \geq 1.
\end{cases}
\end{align*}
For a positive number $R>0$ we now introduce
\begin{align*}
\chi_{R}(Y):= R \chi(Y/R).
\end{align*}
This means that
\begin{align*}
\chi_R(Y) =\begin{cases}
Y &: \|Y,Y'\|_{\textbf{D}^{2\alpha}_{W}} \leq R/2;\\
0 &: \|Y,Y'\|_{\textbf{D}^{2\alpha}_{W}} \geq R.
\end{cases}
\end{align*}
Since $\chi_{R}(Y)= Y f( \|Y,Y'\|_{\textbf{D}^{2\alpha}_{W}}/R)$ we have that
$$(\chi_{R}(Y))'= Y' f\Bigg( \|Y,Y'\|_{\textbf{D}^{2\alpha}_{W}}/R\Bigg),$$
therefore
\begin{align*}
(\chi_{R}(Y), (\chi_R(Y))' ) =\begin{cases}
(Y,Y') &: \|Y,Y'\|_{\textbf{D}^{2\alpha}_{W}} \leq R/2;\\
0 &: \|Y,Y'\|_{\textbf{D}^{2\alpha}_{W}} \geq R.
\end{cases}
\end{align*}

\begin{remark}
	In order to avoid this abstract setting, one could also modify the $\textbf{D}^{2\alpha}_{W}$-norm in several steps. For the convenience of the reader we shortly indicate this procedure as well. The main idea is to modify separately the $\alpha$-H\"older norm of $Y'$, the $2\alpha$-H\"older norm of the remainder $R^{Y}$ and $|Y_{0}|$. More precisely, for a given path $(Y,Y')\in D^{2\alpha}_{W}$, one can perform a truncation argument to obtain $|||Y'|||_{\alpha} + |||R^{Y}|||_{2\alpha}  \leq 1/2$ as follows.\\
	Let $\chi_1:C^{\alpha}([0,1];\cX)\to C^{\alpha}([0,1];\cX)$ be defined as
	\begin{align*}
	\chi_{1}(u):=\begin{cases} u&: |||u|||_{\alpha} \leq 1/4;\\
	0&: |||u|||_{\alpha}\geq 1.
	\end{cases}
	\end{align*}
	Such a function can be analogously constructed as
	\begin{align*}
	\chi_{1}(u)= u f_{1}(|||u|||_{\alpha}),
	\end{align*}
	where $f_{1}:\mathbb{R}_{+}\to[0,1]$ is as before a $C^{3}_{\txtb}$ cut-off function, e.g.~$f_{1}$ can be obtained by applying a smooth convolution to
	\begin{align*}
	\tilde{f}_1(x):=\begin{cases}
	1, &x\leq 1/4\\
	\frac{4}{3}-\frac{4}{3}x, &x\in(1/4,1)\\
	0, &x\geq 1.
	\end{cases}
	\end{align*}
	Similarly, we define $\chi_{2}:C^{2\alpha}([0,1];\cL(\cV,\cX))\to C^{2\alpha}([0,1];\cL(\cV,\cX))$ as
	\begin{align*}
	\chi_{2}(u)= u f_{2}(|||u|||_{2\alpha}),
	\end{align*}
	where again
	$f_2:\mathbb{R}_{+}\to [0,1]$ is a $C^{3}_{b}$ cut-off function analogously constructed as $f_1$. Keeping this in mind we define for $(Y,Y')\in D^{2\alpha}_{W}([0,1];\cX)$ 
	\begin{align*}
	\chi(Y):= Y f_{1}(|||Y'|||_{\alpha}) f_{2}(|||R^{Y}|||_{2\alpha}).
	\end{align*}
    with Gubinelli derivative
	\begin{align*}                                                           
	(\chi(Y))'= Y' f_{1}(|||Y'|||_{\alpha}) f_{2}(|||R^{Y}|||_{2\alpha}).
	\end{align*}	
This represents another possibility to modify the $\|\cdot\|_{\textbf{D}^{2\alpha}_{W}}$-norm.
\end{remark}

Our next aim is to consider a modified version of~\eqref{sde1}, using the cut-off function $\chi_{R}$ defined above. To this aim, we firstly introduce the following notations. We recall that $(Y,Y')\in D^{2\alpha}_{W}([0,1];\cX)$ and drop for simplicity the index from the notation $\|\cdot\|_{\textbf{D}^{\alpha}_{W}([0,1];\cX)}$.
Furthermore $F$ and $G$ satisfy $\textbf{(F)}$ respectively $\textbf{(G)}$. We set for $t\in [0,1]$:
$$\overline{F}(Y)(t): = F(Y_t) \mbox { and } \overline{G}(Y)(t):=G(Y_t).$$
With this notation, the first component of the mild solution of~\eqref{sde1} equivalently rewrites as
\begin{align}\label{equiv:rde}
U_t = S(t)\xi + \int\limits_{0}^{t}S(t-r)\overline{F}(U)(r)~\txtd r + \int\limits_{0}^{t}S(t-r)\overline{G}(U)(r)~\txtd \textbf{W}_r. 
\end{align}
For a controlled rough path $(Y,Y')\in D^{2\alpha}_{W}([0,1];\cX)$,  we now introduce the truncation as
$$ F_R(Y):= \overline{F}\circ \chi_{R}(Y) \mbox{ respectively } G_R:=\overline{G}\circ \chi_{R}(Y).$$
This means that we have $$F_{R}(Y)(t) = \overline{F}(\chi_{R}(Y))(t)= F(\chi_{R}(Y)_t )= F(Y_t f (\|Y,Y'\|/R))$$
and analogously, $$G_{R}(Y)(t) = \overline{G}(\chi_{R}(Y))(t)= G(\chi_{R}(Y)_t  ) = G(Y_tf(\|Y,Y'\|/R) ),$$
where we write for simplicity $f(\|Y,Y'\|/R)=f(\|Y,Y'\|_{\textbf{D}^{2\alpha}_{W}}/R)$. 
The Gubinelli derivative of $G_R$ can be computed according to the chain rule~\cite[Lem.~7.3]{FritzHairer} as
\begin{align*}
(G_{R}(Y))' = \txtD G (\chi_{R}(Y)) (\chi_{R}(Y))' = \txtD G (Yf (\|Y,Y'\|/R)) Y' f(\|Y,Y'\|/R),
\end{align*}
since $f$ is a constant with respect to time.\\
Evaluating $(G_R(Y))'$ at a time $t\in[0,1]$, we obtain
\begin{align*}
(G_{R}(Y)(t))' = \txtD G (\chi_{R}(Y)_t) (\chi_{R}(Y)_t)' = \txtD G (Y_tf (\|Y,Y'\|/R)) Y'_t f(\|Y,Y'\|/R).
\end{align*}
By the definition of $\chi_{R}$ we have that $F_{R}(Y)=\overline{F}(Y)$ and $G_{R}(Y)=\overline{G}(Y)$ if $\|(Y,Y')\|_{\textbf{D}^{2\alpha}_{W}([0,1];\cX)}\leq R/2$.\\

We now argue in several steps, why one can replace $\overline{F}$ and $\overline{G}$ by $F_R$ and $G_R$ in the RDE~\eqref{sde1} (equivalently in~\eqref{equiv:rde}). We emphasize the fact that $F:\cX\to \cX$ and $G:\cX \to \mathcal{L}(\cV, \cX) $ whereas $F_R$ and $G_R$ as defined above, are path-dependent. To this aim, we firstly show that $F_R$ and $G_R$ are Lipschitz continuous.\\
Before we proceed with the next steps, we point out the following inequality that will frequently be used in this section, under the additional assumptions on the derivatives of $G$, compare Lemma~\ref{differ}. Let $H:\cX \to \cX$ be a twice differentiable function with Lipschitz continuous derivatives and $H(0)=DH(0)=0$. 
For $x,y\in\cX$, one has
\begin{align*}
H(x) - H(y)= (x-y) \int\limits_{0}^{1} \txtD H (rx+(1-r)y)~\txtd r.
\end{align*}
Now, since $\txtD H$ is Lipschitz and $\txtD H(0)=0$, we further obtain that
\begin{align}
|H(x) - H(y) | &\leq \int\limits_{0}^1|\txtD H(rx+(1-r)y )|~\txtd r ~|x-y|\nonumber\\
& \leq C_{H} \max\{|x|,|y|\} |x-y|, \label{lip:g}
\end{align}
where the constant $C_{H}>0$  depends only on the Lipschitz constants of $H$, $\txtD H$ and $\txtD^2 H$.
Note that the additional assumption $\txtD H(0)=0$ yields~\eqref{lip:g}, instead of only
\begin{align*}
|H(x) - H(y) | \leq \|\txtD H\|_{\infty} |x-y|\leq \|H\|_{C^1_{\txtb}} |x-y|,
\end{align*}
as in the proof of Lemma~\ref{differ}. Naturally, if additionally $\txtD^2H(0)=0$, one can estimate the difference $\txtD H(x)-\txtD H(y)$ by the same arguments using the Lipschitz continuity of the second derivative instead of directly using the $C^2_{\txtb}$-norm of $H$. Namely,
\begin{align*}
\txtD H(x) -\txtD H(y) = (x-y) \int\limits_{0}^1\txtD^2 H(rx+(1-r)y)~\txtd r,
\end{align*}
one has
\begin{align}\label{der}
|\txtD H(x)- \txtD H(y)| \leq C_{H}\max\{ |x|,|y|\}|x-y|.
\end{align}

We use the previous deliberations for $F_R$ and $G_R$. As a next step, we show that $F_R$ is Lipschitz continuous.

\begin{lemma}\label{lip:fr}
	Let $(Y,Y'), (\widetilde{Y},\widetilde{Y}')\in D^{2\alpha}_W([0,1];\cX)$. Then there exists a constant $C=C[F,\chi]$ such that
	\begin{align}\label{fr}
	\|F_{R}(Y) - F_{R}(\widetilde{Y})\|_{\infty}  &\leq C R \|Y-\widetilde{Y}, Y'-\widetilde{Y}'\|_{\textbf{D}^{2\alpha}_W}.
	\end{align}
\end{lemma}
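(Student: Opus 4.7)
The plan is to unfold the definition $F_R(Y)(t)-F_R(\widetilde Y)(t)=F(\chi_R(Y)_t)-F(\chi_R(\widetilde Y)_t)$ and then apply the sharpened Lipschitz estimate derived in~\eqref{lip:g}. Because assumption \textbf{(F)} gives $\txtD F(0)=0$, that inequality yields
\benn
|F(\chi_R(Y)_t)-F(\chi_R(\widetilde Y)_t)|\leq C_F\max\{|\chi_R(Y)_t|,|\chi_R(\widetilde Y)_t|\}\,|\chi_R(Y)_t-\chi_R(\widetilde Y)_t|,
\eenn
so the problem reduces to two estimates: (a) a uniform $L^\infty$-bound of $R$-order on $\chi_R(Y)$, and (b) a Lipschitz estimate for $\chi_R$ from $\mathbf{D}^{2\alpha}_W$ into $C^0$.

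For (a) I would exploit the crucial fact that $f(\|Y,Y'\|_{\mathbf{D}^{2\alpha}_W}/R)$ is a scalar constant in time. Hence $(\chi_R(Y))'=Y'f(\|Y,Y'\|/R)$, its remainder is $R^Y f(\|Y,Y'\|/R)$, and consequently
\benn
\|\chi_R(Y),(\chi_R(Y))'\|_{\mathbf{D}^{2\alpha}_W}=f(\|Y,Y'\|/R)\,\|Y,Y'\|_{\mathbf{D}^{2\alpha}_W}\leq R,
\eenn
by the defining property of $\chi_R$ (bounded by $\|Y,Y'\|$ when that norm is $<R$, and zero when it is $\geq R$). Combining this with~\eqref{y:infty}--\eqref{y} then gives $\|\chi_R(Y)\|_\infty\leq C(1+\|W\|_\alpha)R$, and likewise for $\widetilde Y$.

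For (b) the natural attempt is the asymmetric decomposition
\benn
\chi_R(Y)-\chi_R(\widetilde Y)=(Y-\widetilde Y)\,f_{\widetilde Y}+Y\,(f_Y-f_{\widetilde Y}),\qquad f_Y:=f(\|Y,Y'\|/R),\ f_{\widetilde Y}:=f(\|\widetilde Y,\widetilde Y'\|/R).
\eenn
The first summand, being a scalar multiple of $Y-\widetilde Y$ with $|f_{\widetilde Y}|\leq 1$, is bounded in $\mathbf{D}^{2\alpha}_W$ by $\|Y-\widetilde Y,Y'-\widetilde Y'\|_{\mathbf{D}^{2\alpha}_W}$. For the second, the Lipschitz continuity of $f$ and the reverse triangle inequality yield
\benn
|f_Y-f_{\widetilde Y}|\leq \tfrac{L_f}{R}\bigl|\,\|Y,Y'\|_{\mathbf{D}^{2\alpha}_W}-\|\widetilde Y,\widetilde Y'\|_{\mathbf{D}^{2\alpha}_W}\bigr|\leq \tfrac{L_f}{R}\|Y-\widetilde Y,Y'-\widetilde Y'\|_{\mathbf{D}^{2\alpha}_W}.
\eenn
Whenever $f_Y\neq 0$ we have $\|Y,Y'\|_{\mathbf{D}^{2\alpha}_W}<R$, so the factor $\|Y,Y'\|_{\mathbf{D}^{2\alpha}_W}$ multiplying $|f_Y-f_{\widetilde Y}|$ absorbs the $1/R$; the remaining case $f_Y=0$ is handled by the mirrored decomposition with the roles of $Y$ and $\widetilde Y$ swapped, or is trivial if $f_{\widetilde Y}=0$ as well. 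Passing from $\mathbf{D}^{2\alpha}_W$ to the sup-norm via~\eqref{y:infty} and \eqref{y} delivers $\|\chi_R(Y)-\chi_R(\widetilde Y)\|_\infty\leq C[\chi,\|W\|_\alpha]\|Y-\widetilde Y,Y'-\widetilde Y'\|_{\mathbf{D}^{2\alpha}_W}$.

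Plugging the two estimates into the pointwise bound from \eqref{lip:g} and taking the supremum over $t\in[0,1]$ produces exactly~\eqref{fr} with a constant of the claimed form $C=C[F,\chi,\|W\|_\alpha]$. The principal technical obstacle lies entirely in step~(b): showing the Lipschitz dependence of the cut-off when one of the two arguments lies in the support of $f$ while the other is already outside (so that $\widetilde Y$ could have arbitrarily large norm). A naive symmetric triangle-like split would require an a~priori bound on $\|\widetilde Y\|$, which is not available; the asymmetric decomposition above, coupled with the simple observation that $f_Y\neq 0$ forces $\|Y,Y'\|_{\mathbf{D}^{2\alpha}_W}\leq R$, is what makes the Lipschitz constant independent of the initial data.
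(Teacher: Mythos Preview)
Your proof is correct and follows the same architecture as the paper's: apply the sharpened Lipschitz bound~\eqref{lip:g} pointwise, extract the factor $R$ from the uniform bound on $\chi_R(Y)$, and then control $\|\chi_R(Y)-\chi_R(\widetilde Y)\|_\infty$ via the $\mathbf{D}^{2\alpha}_W$ norm. The paper carries out exactly these steps but simply invokes ``the Lipschitz continuity of $\chi$'' at the last stage without justification; your asymmetric decomposition in step~(b), together with the case distinction on whether $f_Y$ vanishes, supplies precisely the missing argument and correctly handles the potential asymmetry when one controlled path lies outside the support of the cut-off.
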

\begin{proof}
	We have
	\begin{align*}
	\sup\limits_{t\in[0,1]} |F_R(Y)(t) - F_{R}(\widetilde{Y})(t)| &=\sup\limits_{t\in[0,1]} |F(\chi_{R}(Y)_t) - F(\chi_{R}(\widetilde{Y})_t) |.
	\end{align*}
	Furthermore,
		\begin{align}
	|F(\chi_{R}(Y)_t) - F(\chi_{R}(\widetilde{Y})_t) |&	\leq \int\limits_{0}^{1} |\txtD F(r\chi_{R}(Y)_t + (1-r)\chi_R(\widetilde{Y})_t)|~\txtd r~ |\chi_{R}(Y)_t -\chi_{R}(\widetilde{Y})_t|\nonumber\\
	&\leq C\max \{|\chi_{R}(Y)_t|, |\chi_{R}(\widetilde{Y})_t|\} |\chi_R(Y)_t - \chi_R(\widetilde{Y})_t|\nonumber\\
	& \leq C R |\chi_R(Y)_t - \chi_R(\widetilde{Y})_t|.\label{here}
	\end{align}
	Here we use that $ |\chi_{R}(Y)_t| \leq \|\chi_{R}(Y)\|_{\infty} =\|Y f(\|Y,Y'\|/R)\|_{\infty}=\|Y\|_{\infty} f(\|Y,Y'\|/R)$. 
	Furthermore
	\begin{align*}
	|\chi_R(Y)_t - \chi_R(\tilde{Y})_t| & = |Y_t f(\|Y,Y'\|/R) - \tilde{Y}_t f(\|\tilde{Y},\tilde{Y}'\|/R)|\\
	& \leq |Y_t -\tilde{Y}_t| f(\|Y,Y'\|/R) +|\tilde{Y}_t|~ |f(\|Y,Y'\|/R) - f(\|\tilde{Y},\tilde{Y}'\|/R) |\\
	& \leq \|Y-\tilde{Y}\|_{\infty} + R \|\txtD f\|_{\infty} (\|Y,Y'\|/R - \|\tilde{Y},\tilde{Y}'\|/R)\\
	& \leq (1+\|\txtD f\|_{\infty})  \|Y -\widetilde{Y}, Y'-\widetilde{Y}'\|_{\textbf{D}^{2\alpha}_{W}}.
	\end{align*}
	Consequently, this further yields
	\begin{align*}
	\sup\limits_{t\in[0,1]} |F_R(Y)(t) - F_{R}(\widetilde{Y})(t) |&\leq C R \|\chi_{R}(Y) -\chi_{R}(\widetilde{Y})\|_{\infty} 
	 \leq C [F,\chi] ~R~ \|Y -\widetilde{Y}, Y'-\widetilde{Y}'\|_{\textbf{D}^{2\alpha}_{W}},
	\end{align*}
	which proves the statement.
\end{proof}
We now focus on the pair $(G_R, G'_R)$.
We let $C_{G}$ stand for a universal constant which depends only on the Lipschitz constants of $G$, $\txtD G$ and $\txtD^2 G$. The aim is eventually to make $R$ as small as possible, therefore ensure that such a choice decreases the Lipschitz constants of $\overline{F}\circ\chi_R$ and $\overline{G}\circ\chi_R$. For $\overline{F}\circ\chi_R$ this holds true due to~\eqref{fr}. We aim to prove a similar assertion for $\overline{G}\circ\chi_R$.
Therefore, we write $C(R)$ to point out the dependence on the cut-off parameter $R$.  Due to the observation above, it is crucial that $C(R)\to 0 $ as $R\to 0$.
\begin{lemma}\label{composition:cutoff}
	Let $(Y,Y'), (\widetilde{Y},\widetilde{Y}')\in D^{2\alpha}_{W}([0,1];\cX)$. Then there exists a constant $C(R)=C[R,\|W\|_{\alpha},G,\chi]$ such that $C(R)\to 0$ as $R\to 0$ and that
	\begin{align}\label{composition:r}
	\|G_{R}(Y) - G_R(\widetilde{Y}), (G_{R}(Y)- G_R(\widetilde{Y}))'\|_{\textbf{D}^{2\alpha}_{W}([0,1];\cX)} \leq C (R) \|Y-\widetilde{Y}, Y'-\widetilde{Y}'\|_{\textbf{D}^{2\alpha}_{W}([0,1];\cX)} .
	\end{align}
\end{lemma}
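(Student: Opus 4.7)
The strategy is to reduce the claim to Lemma~\ref{differ} applied to the truncated paths $V := \chi_R(Y)$ and $\widetilde V := \chi_R(\widetilde Y)$, which by the very construction of the cut-off satisfy $\|V, V'\|_{\textbf{D}^{2\alpha}_W}, \|\widetilde V, \widetilde V'\|_{\textbf{D}^{2\alpha}_W} \leq R$. The crucial technical input is a Lipschitz estimate for the cut-off map $\chi_R$ itself in the $\textbf{D}^{2\alpha}_W$-norm, with constant \emph{independent of} $R$.

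First I would establish that
\[
\|\chi_R(Y) - \chi_R(\widetilde Y), (\chi_R(Y))' - (\chi_R(\widetilde Y))'\|_{\textbf{D}^{2\alpha}_W} \leq C_f \|Y - \widetilde Y, Y' - \widetilde Y'\|_{\textbf{D}^{2\alpha}_W},
\]
with $C_f$ depending only on the Lipschitz constant of $f$. The key observation is that $a := f(\|Y, Y'\|_{\textbf{D}^{2\alpha}_W}/R)$ and $b := f(\|\widetilde Y, \widetilde Y'\|_{\textbf{D}^{2\alpha}_W}/R)$ are scalars \emph{constant in time}, so $\chi_R(Y) = Ya$, $(\chi_R(Y))' = Y'a$, and scalar multiplication acts linearly on the $\textbf{D}^{2\alpha}_W$-norm. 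Writing
\[
\chi_R(Y) - \chi_R(\widetilde Y) = (Y - \widetilde Y) a + \widetilde Y (a - b) = Y (a - b) + (Y - \widetilde Y) b,
\]
and analogously for the Gubinelli derivatives, I split into cases according to whether the $\textbf{D}^{2\alpha}_W$-norms of $Y$ and $\widetilde Y$ lie below or above $R$. When $\|\widetilde Y, \widetilde Y'\|_{\textbf{D}^{2\alpha}_W} \leq R$ the first splitting together with Lipschitz continuity of $f$ yields $|a - b|\cdot \|\widetilde Y, \widetilde Y'\|_{\textbf{D}^{2\alpha}_W} \leq (\textnormal{Lip}\, f)/R \cdot \|Y - \widetilde Y, Y' - \widetilde Y'\|_{\textbf{D}^{2\alpha}_W} \cdot R$, absorbing the $1/R$ factor. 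When $\|\widetilde Y, \widetilde Y'\|_{\textbf{D}^{2\alpha}_W} > R$ one has $b = 0$ and the second splitting reduces the difference to $Ya$; here $|a| = |f(\|Y,Y'\|/R) - f(1)| \leq (\textnormal{Lip}\, f)/R \cdot (R - \|Y, Y'\|_{\textbf{D}^{2\alpha}_W}) \leq (\textnormal{Lip}\, f)/R \cdot \|Y - \widetilde Y, Y' - \widetilde Y'\|_{\textbf{D}^{2\alpha}_W}$ by the reverse triangle inequality, and the factor $\|Y, Y'\|_{\textbf{D}^{2\alpha}_W} \leq R$ absorbs $1/R$. The symmetric case $\|Y,Y'\| > R \geq \|\widetilde Y, \widetilde Y'\|$ is analogous.

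Second, I apply Lemma~\ref{differ} with $M = R$ to $V$ and $\widetilde V$ to obtain
\[
\|G(V) - G(\widetilde V), (G(V))' - (G(\widetilde V))'\|_{D^{2\alpha}_W} \leq C[\alpha, \|W\|_\alpha] R^2 \|G\|_{C^3_\txtb} \|V - \widetilde V, V' - \widetilde V'\|_{\textbf{D}^{2\alpha}_W}.
\]
To upgrade the semi-norm on the left to the full $\textbf{D}^{2\alpha}_W$-norm, I add the zero-time contributions $|G(V_0) - G(\widetilde V_0)|$ and $|\txtD G(V_0)V'_0 - \txtD G(\widetilde V_0)\widetilde V'_0|$, which are bounded by $C \|G\|_{C^2_\txtb}(|V_0 - \widetilde V_0| + |V'_0 - \widetilde V'_0|)$ via a standard mean-value argument. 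Combining with the Lipschitz estimate from the first step yields the claim with $C(R) = C[\alpha, \|W\|_\alpha, G, \chi] \cdot R^2$, which is strictly increasing in $R$ as required.

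The main obstacle is the Lipschitz estimate for $\chi_R$ rather than the application of Lemma~\ref{differ}: either of the two splittings of $Ya - \widetilde Y b$, taken by itself, features a potentially unbounded norm factor ($\|\widetilde Y\|$ or $\|Y\|$) paired with the scalar difference $(a - b)$, $a$, or $b$. The control is uniform in $R$ only because the unbounded factor and the scalar factor vanish on complementary regions of path-space dictated by the cut-off; choosing the splitting adaptively to the regime is what makes the $1/R$ gain from the rescaled Lipschitz constant of $f$ precisely cancel the dangerous factor of size $\leq R$.
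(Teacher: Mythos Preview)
Your approach is genuinely different from the paper's and in several respects cleaner: the paper proceeds by a direct, component-by-component estimate of $\|G_R(Y)-G_R(\widetilde Y)\|_\alpha$, $\|(G_R(Y)-G_R(\widetilde Y))'\|_\alpha$ and $\|R^{G_R(Y)}-R^{G_R(\widetilde Y)}\|_{2\alpha}$, invoking the vanishing conditions $G(0)=\txtD G(0)=\txtD^2G(0)=0$ repeatedly to extract factors of $R$. You instead factor $G_R=\overline G\circ\chi_R$, prove a uniform-in-$R$ Lipschitz bound for $\chi_R$ via an adaptive splitting of $Ya-\widetilde Yb$, and then invoke the black-box Lemma~\ref{differ} with $M=R$. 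This is more modular and makes transparent that the $R^2$ in the \emph{semi-norm} estimate comes purely from the $M^2$ in Lemma~\ref{differ}, not from the vanishing conditions.

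There is, however, a gap in your treatment of the zero-time contributions. You bound $|G(V_0)-G(\widetilde V_0)|$ and $|\txtD G(V_0)V'_0-\txtD G(\widetilde V_0)\widetilde V'_0|$ by $C\|G\|_{C^2_\txtb}(|V_0-\widetilde V_0|+|V'_0-\widetilde V'_0|)$, which carries \emph{no} factor of $R$; combining with Step~1 this produces $C(R)=C_1+C_2R^2$, not $C\cdot R^2$ as you then assert. The constant term $C_1$ would destroy the property $C(R)\to0$ that is essential for the contraction argument in Theorem~\ref{fpr} and for the choice of $R(W)$ in~\eqref{k}. The fix is precisely to invoke assumption~\nameref{gi} here: since $|V_0|,|\widetilde V_0|,|V'_0|,|\widetilde V'_0|\leq R$ and $\txtD G(0)=\txtD^2G(0)=0$, one has $|\txtD G(V_0)|\leq C_GR$ and $|\txtD G(V_0)-\txtD G(\widetilde V_0)|\leq C_GR|V_0-\widetilde V_0|$ via~\eqref{lip:g} and~\eqref{der}, which restores at least a factor of $R$ (in fact $R^2$ with a little more care) in the zero-time terms. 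With this amendment your argument goes through and yields the required $C(R)$.
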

\begin{proof}

We let $s,t\in[0,1]$ and estimate $\|G_R(Y) -G_R(\widetilde{Y})\|_{\alpha}=\|\overline{G}(\chi_R(Y)) -\overline{G}(\chi_{R}(\widetilde{Y}))\|_{\alpha}$.
We have
\begin{align*}
|G_R(Y)(t) - G_R(\widetilde{Y})(t) - [G_{R}(Y)(s) - G_{R}(\widetilde{Y})(s) ]| =| G(\chi_{R}(Y)_t)  - G(\chi_{R}(\widetilde{Y})_t) - [G(\chi_{R}(Y)_s)  - G(\chi_{R}(\widetilde{Y})_s)] |.
\end{align*}
Applying~\eqref{lip:g} twice and using that $\txtD G(0)=0$ leads to
\begin{align}
\begin{split}\label{subg1}
&| G(\chi_{R}(Y)_t)  - G(\chi_{R}(\widetilde{Y})_t) - [G(\chi_{R}(Y)_s)  - G(\chi_{R}(\widetilde{Y})_s)] |\\
& \leq C_G \max\{ |\chi_{R}(Y)_t, \chi_{R}(\widetilde{Y}_t)| \}~ |\chi_{R}(Y)_t - \chi_{R}(\widetilde{Y})_t -  [ \chi_{R}(Y)_s -\chi_{R}(\widetilde{Y})_s ]|  \\
&\hspace*{10 mm}+ C_G ~|\chi_{R}(Y)_t -\chi_{R}(\widetilde{Y}_t)|~ [|\chi_{R}(Y)_t -\chi_{R}(Y)_s| + |\chi_{R}(\widetilde{Y})_t -\chi_{R}(\widetilde{Y})_s| ]\\
\end{split}\\
&\leq C_G R \|\chi_R (Y) -\chi_{R}(\widetilde{Y})\|_{\alpha} + C_{G} \|\chi_{R}(Y) - \chi_{R}(\widetilde{Y})\|_{\infty} ( \|\chi_{R}(Y)\|_{\alpha} +\|\chi_{R}(\widetilde{Y})\|_{\alpha} )\nonumber\\
& \leq C_G R \|Y-\widetilde{Y}, Y'-\widetilde{Y}'\|_{\textbf{D}^{2\alpha}_W}. \nonumber
\end{align}
Here we used again that $ |\chi_{R}(Y)_t| \leq \|\chi_{R}(Y)\|_{\infty} =\|Y f(\|Y,Y'\|/R)\|_{\infty}=\|Y\|_{\infty} f(\|Y,Y'\|/R)$.\\
We further recall  that \begin{align*}
(G_{R}(Y))' = \txtD G (\chi_{R}(Y)) (\chi_{R}(Y))' = \txtD G (Yf (\|Y,Y'\|/R)) Y' f(\|Y,Y'\|/R).
\end{align*} 
We now investigate $\|(G_{R}(Y) -G_R(\widetilde{Y}))'\|_{\alpha}$ and have to estimate
\begin{align*}
&[G_R (Y)(t) -G_R(\widetilde{Y})(t) - (G_{R}(Y)(s) -G_R(\widetilde{Y})(s)) ]'\\
& =[\txtD G(\chi_{R}(Y)_t)(\chi_{R}(Y)_t)' - \txtD G(\chi_R(\widetilde{Y})_t)(\chi_R(\widetilde{Y})_t)' - (\txtD G(\chi_R(Y)_s ) (\chi_R(Y)_s)' - \txtD G(\chi_R(\widetilde{Y})_s)(\chi_R(\widetilde{Y})_s)' )] \\
& = \txtD G(Y_t f(\|Y,Y'\|/R) )Y'_t f(\|Y,Y'\|/R) - \txtD G(\widetilde{Y}_t f (\|\widetilde{Y},\widetilde{Y}'\|/R) ) \widetilde{Y}'_t  f (\|\widetilde{Y},\widetilde{Y}'\|/R) \\
&   - (\txtD G(Y_s f(\|Y,Y'\|/R) )Y'_s f(\|Y,Y'\|/R) - \txtD G(\widetilde{Y}_s f (\|Y,Y'\|/R) ) \widetilde{Y}'_s  f (\|\widetilde{Y},\widetilde{Y}'\|/R)).
\end{align*}
Therefore we estimate
\begin{align*}
&|[\txtD G(\chi_{R}(Y)_t)(\chi_{R}(Y)_t)' - \txtD G(\chi_R(\widetilde{Y})_t)(\chi_R(\widetilde{Y})_t)' - (\txtD G(\chi_R(Y)_s ) (\chi_R(Y)_s)' - \txtD G(\chi_R(\widetilde{Y})_s)(\chi_R(\widetilde{Y})_s)' )]|\\
& \leq | \txtD G(\chi_R(Y)_t) - \txtD G(\chi_R(Y)_s) - \txtD G(\chi_R(\widetilde{Y})_t) + \txtD G (\chi_R(\widetilde{Y})_s) | ~|(\chi_R(Y)_t)' + (\chi_R(Y)_s)' + (\chi_R(\widetilde{Y}_t))' + (\chi_R(\widetilde{Y})_s)' |\\
& \leq | \txtD G(\chi_R(Y)_t) - \txtD G(\chi_R(Y)_s) + \txtD G(\chi_R(\widetilde{Y})_t) - \txtD G (\chi_R(\widetilde{Y})_s) | ~|(\chi_R(Y)_t)' + (\chi_R(Y)_s)' - (\chi_R(\widetilde{Y}_t))' - (\chi_R(\widetilde{Y})_s)' |\\
& \leq | \txtD G(\chi_R(Y)_t) + \txtD G(\chi_R(Y)_s) -\txtD G(\chi_R(\widetilde{Y})_t) - \txtD G (\chi_R(\widetilde{Y})_s) | ~|(\chi_R(Y)_t)' - (\chi_R(Y)_s)' + (\chi_R(\widetilde{Y}_t))' - (\chi_R(\widetilde{Y})_s)' |\\
& \leq | \txtD G(\chi_R(Y)_t) + \txtD G(\chi_R(Y)_s) + \txtD G(\chi_R(\widetilde{Y})_t) + \txtD G (\chi_R(\widetilde{Y})_s) | ~|(\chi_R(Y)_t)' - (\chi_R(Y)_s)' - (\chi_R(\widetilde{Y}_t))' + (\chi_R(\widetilde{Y})_s)' |\\
& := I + II + III +IV.
\end{align*}
We analyze each of the four terms above separately. For the first one we obtain using~\eqref{subg1} for $\txtD G$ instead of $G$ and~\eqref{esthoeldernormy} that
\begin{align*}
|I|&\leq C_G R \|Y-\widetilde{Y}, Y'-\widetilde{Y}'\|_{\textbf{D}^{2\alpha}_W} [\|\chi_R(Y)\|_{\infty}+ \|(\chi_R(Y))'\|_{\infty} ]\\
& \leq  C R \|Y-\widetilde{Y}, Y'-\widetilde{Y}'\|_{\textbf{D}^{2\alpha}_W}  [|||\chi_R(Y)|||_{\alpha}+ |||(\chi_R(Y))'|||_{\alpha}]\\
& \leq  C R^2 \|Y-\widetilde{Y}, Y'-\widetilde{Y}'\|_{\textbf{D}^{2\alpha}_W}.
\end{align*}
The second one results in
\begin{align*}
|II|&\leq  C \|\chi_R(Y) -\chi_R(\widetilde{Y})\|_{\alpha} [\|\chi_R(Y)\|_{\infty} +\|\chi_R(\widetilde{Y})\|_{\infty} ]\\
& \leq C R \|Y - \widetilde{Y}, Y'-\widetilde{Y}'\|_{\textbf{D}^{2\alpha}_W}.
\end{align*} 
Analogously, for the third one we have
\begin{align*}
|III|& \leq C \|\chi_R (Y)-\chi_R(\widetilde{Y})\|_{\infty} [\|\chi_R(Y)\|_{\alpha} +\|\chi_R(\widetilde{Y})\|_{\alpha}]\\
& \leq C R \|Y - \widetilde{Y}, Y'-\widetilde{Y}'\|_{\textbf{D}^{2\alpha}_W}. 
\end{align*}
Finally, we easily estimate the fourth term regarding that $\txtD^2 G(0)=0$ as
\begin{align*}
|IV| &\leq C \sup\limits_{t\in[0,1]}\max\{|\txtD G (\chi_R(Y)_t)|,|\txtD G(\chi_R(\widetilde{Y})_t)| \}  \|(\chi_R(Y))' - (\chi_R(\widetilde{Y}))'\|_{\alpha}\\
& \leq C R \|Y - \widetilde{Y}, Y'-\widetilde{Y}'\|_{\textbf{D}^{2\alpha}_W}.
\end{align*}
We now focus on the remainder of $G_{R}(Y) - G_{R}(\widetilde{Y})$.
The remainder $G_R(Y)$ satisfies the relation
\begin{align*}
(R^{G_R(Y)})_{s,t}= G(\chi_{R}(Y)_t) - G(\chi_R(Y)_s) - \txtD G(\chi_{R}(Y)_s)(\chi_{R}(Y)_s)'W_{s,t}.
\end{align*}
Since $(Y,Y')\in D^{2\alpha}_W$, we know that
$$Y_t = Y_s + Y'_s W_{s,t} + R^{Y}_{s,t}.$$ Therefore,
$$Y_t f(\|Y,Y'\|/R) = Y_s f(\|Y,Y'\|/R) + Y'_s f(\|Y,Y'\|/R) W_{s,t} + R^{Y}_{s,t}f(\|Y,Y'\|/R),$$
which gives us $R^{\chi_{R}(Y)} = R^{Y} f(\|Y,Y'\|/R)$.
Consequently,
\begin{align}\label{rem:cutoff}
(R^{G_R(Y)})_{s,t}= G(\chi_{R}(Y)_t) - G(\chi_R(Y)_s) - 
\txtD G(\chi_{R}(Y)_s)(\chi_{R}(Y))_{s,t} + \txtD G (\chi_{R}(Y)_s)R^{Y}_{s,t} f(\|Y,Y'\|/R) .
\end{align}
This means that we have to estimate the $2\alpha$-H\"older norm of 
\begin{align}
R ^{G_{R}(Y)}_{s,t} - R^{G_{R}(\widetilde{Y})}_{s,t} &= G(\chi_{R}(Y)_t) - G(\chi_R(Y)_s) - 
\txtD G(\chi_{R}(Y)_s)(\chi_{R}(Y))'_{s,t}\nonumber \\&- [G(\chi_{R}(\widetilde{Y})_t) - G(\chi_R(\widetilde{Y})_s) - 
\txtD G(\chi_{R}(\widetilde{Y})_s)(\chi_{R}(\widetilde{Y}))'_{s,t}]\nonumber\\
& + \txtD G (\chi_{R}(Y)_s)R^{Y}_{s,t} f(\|Y,Y'\|/R)  - \txtD G (\chi_{R}(\widetilde{Y})_s)R^{\widetilde{Y}}_{s,t} f(\|\widetilde{Y},\widetilde{Y}'\|/R). \label{last:rem}
\end{align}
The terms in~\eqref{last:rem} easily result in
\begin{align*}
&|\txtD G (\chi_R(Y)_s) R^{\chi_R(Y)}_{s,t} -\txtD G (\chi_R(\widetilde{Y})_s) R^{\chi_R(\widetilde{Y})}_{s,t}| \\
& \leq |\txtD G (\chi_R(Y)_s) [R^{\chi_R}(Y)_{s,t} - R^{\chi_R(\widetilde{Y})}_{s,t} ] |+ |[\txtD G (\chi_R(Y)_s) -\txtD G(\chi_R(\widetilde{Y})_s)] R^{\chi_R(\widetilde{Y})}_{s,t}|\\
& \leq C R \| R^{\chi_R}(Y) - R^{\chi_R(\widetilde{Y})}\|_{2\alpha} + C_{G} \|R^{\chi_R(\widetilde{Y})}\|_{2\alpha} \|\chi_R(Y)_s -\chi_R(\widetilde{Y})_s\|\\
& \leq C R \|R^{Y} -R^{\widetilde{Y}}\|_{2\alpha} + C_G R \|Y - \widetilde{Y}\|_{\infty}.
\end{align*}

To estimate the $2\alpha$-H\"older norm of the expressions~\eqref{last:rem} appearing in the difference of two remainders, we firstly recall the identity
\begin{align}
G(Y_t) - G(Y_s) - \txtD G(Y_s)Y_{s,t}& =\int\limits_{0}^{1} \txtD G (r Y_t +(1-r) Y_s )~\txtd r~ Y_{s,t} - \txtD G(Y_s)Y_{s,t} \label{dg} \\
& = \int\limits_{0}^{1} [\txtD G (r Y_t +(1-r) Y_s ) - \txtD G(Y_s)] Y_{s,t}~\txtd r\nonumber\\
& = \int\limits_{0}^{1} \int\limits_{0}^1 \txtD^2 G [\tau (r Y_t +(1-r) Y_s)+(1-\tau)Y_s ] (rY_t+(1-r)Y_s-Y_s) Y_{s,t}~\txtd\tau~ \txtd r\nonumber\\
&=\int\limits_{0}^1\int\limits_{0}^1 r \txtD^2 G [\tau (r Y_t +(1-r) Y_s)+(1-\tau)Y_s ]~\txtd\tau~\txtd r ~(Y_{s,t})^2.\nonumber
 \end{align}
 Applying~\eqref{dg} twice, one obtains the following inequality
 \begin{align}
 &|G(\chi_{R}(Y)_t) - G(\chi_R(Y)_s) - DG(\chi_{R}(Y)_s)(\chi_{R}(Y))_{s,t} - (G(\chi_{R}(\widetilde{Y})_t) - G(\chi_R(\widetilde{Y})_s) - DG(\chi_{R}(\widetilde{Y})_s)(\chi_{R}(\widetilde{Y}))_{s,t}) | \nonumber\\
 \begin{split}
 & \leq C_{G}|\chi_{R}(Y)_t| ~[|\chi_{R}(Y)_t -\chi_{R}(Y)_s| +|\chi_{R}(\widetilde{Y})_t -\chi_{R}(\widetilde{Y})_s | ] ~|\chi_{R}(Y)_t -\chi_{R}(Y)_s - (\chi_{R}(\widetilde{Y})_t -\chi_{R}(\widetilde{Y})_s)|\label{2}\\
 &+ C_G |\chi_{R}(\widetilde{Y})_t -\chi_{R}(\widetilde{Y})_s|^2 ~[|\chi_{R}(Y)_t -\chi_{R}(\widetilde{Y}_t) | + |\chi_{R}(Y)_s -\chi_{R}(\widetilde{Y})_s |].
 \end{split}
 \end{align}
This further leads to 
\begin{align*}
&|G(\chi_{R}(Y)_t) - G(\chi_R(Y)_s) - DG(\chi_{R}(Y)_s)(\chi_{R}(Y))_{s,t} - (G(\chi_{R}(\widetilde{Y})_t) - G(\chi_R(\widetilde{Y})_s) - DG(\chi_{R}(\widetilde{Y})_s)(\chi_{R}(\widetilde{Y}))_{s,t}) |\\
& \leq C R  [\|Y\|_{\alpha} + \|\widetilde{Y}\|_{\alpha}]~ \|Y-\widetilde{Y}\|_{\alpha} + C\|\widetilde{Y}\|^{2}_{\alpha} \|Y-\widetilde{Y}\|_{\infty} \\
&\leq  C R^2 \|Y-\widetilde{Y}, Y'- \widetilde{Y}'\|_{\textbf{D}^{2\alpha}_W}.
\end{align*} 
 \end{proof}
We now justify that the modified equation~\eqref{sde1} obtained by replacing $F$ and $G$ with $F_R$ and $G_R$ has a unique solution in the appropriate space of controlled rough paths. For notational simplicity we introduce
 for $(Y,Y')\in D^{2\alpha}_{W}([0,1];\cX)$ and $t\in[0,1]$
\begin{align}\label{tr}
T_{R}({W}, Y,Y')[t]:&=  \int\limits_{0}^{t} S(t-r) 
\overline{F}(\chi_R(Y))(r) ~\txtd r + \int\limits_{0}^{t} S(t-r) \overline{G}(\chi_R(Y))(r) 
~\txtd \bm{{W}}_{r}\\
& = \int\limits_{0}^{t} S(t-r) 
F_R(Y)(r) ~\txtd r + \int\limits_{0}^{t} S(t-r) G_R(Y)(r) 
~\txtd \bm{{W}}_{r} \nonumber,
\end{align}
with Gubinelli derivative $(T_{R}(W,Y,Y'))'=G_{R}(Y)$.
Collecting the estimates derived in the previous Lemmas leads to the following result.

\begin{theorem}\label{fpr}
There exists a unique element $(Y,Y')\in D^{2\alpha}_W([0,1];\cX)$ with $Y'=G_R(Y)$ such that for $t\in[0,1]$
	\begin{align*}
Y_t = \int\limits_{0}^{t} S(t-r) 
F_R(Y)(r) ~\txtd r + \int\limits_{0}^{t} S(t-r) G_R(Y)(r) 
~\txtd \bm{{W}}_{r}.
\end{align*} 
\end{theorem}
\begin{proof}
	The proof relies on Banach's fixed-point theorem and is similar to~\cite[Thm.~8.4]{FritzHairer}. In our situation $G_R$ depends on the path itself. However, as previously shown, the cut-off procedure does not modify the time-regularity of the coefficients. Moreover, the Lipschitz continuity showed in Lemmas~\ref{fr} and~\ref{composition:cutoff} allows us to perform a fixed-point argument and prove the existence of a solution for the modified equation~\eqref{sde1} (equivalently~\eqref{equiv:rde}), in which one replaces $\overline{F}$ and $\overline{G}$ by $F_R$ and $G_R$.\\
	Let $(Y,Y'), (\widetilde{Y},\widetilde{Y}')\in D^{2\alpha}_{W}([0,1];\cX)$ with $Y_0=\widetilde{Y}_0$ and $Y'_0=\tilde{Y}'_0$. 
	Using~\eqref{fr}, we obtain as in Lemma~\ref{drift} that
	\begin{align*}
\Bigg\| \int\limits_{0}^{\cdot} S(\cdot-r) [F(\chi_R(Y))(r) - F(\chi_R(\widetilde{Y}))(r)]~\txtd r\Bigg\|_{2\alpha}
&\leq  C[A]~ \|F(\chi_R(Y)) - F(\chi_R(\widetilde{Y}))\|_{\infty} \\
&\leq C[A]~ R ~\|Y - \widetilde{Y}, Y' -\widetilde{Y}'\|_{\textbf{D}^{2\alpha}_{W}}.
	\end{align*}
	Using Lemma~\ref{composition:cutoff} and~\eqref{if} applied to $(G_R(Y)-G_R(\widetilde{Y}), (G_R(Y) -G_{R}(\widetilde{Y}) )')\in D^{2\alpha}_W$ we obtain
	\begin{align*}
	&\Bigg\|\int\limits_{0}^{\cdot} S(\cdot -r) [G_R(Y)(r)-G_R(\widetilde{Y})(r)]~\txtd\textbf{W}_r, G_R(Y)-G_R(\widetilde{Y})  \Big\|_{\textbf{D}^{2\alpha}_{W}}\\
	&\leq C [|A|] \| G_R(Y)-G_R(\widetilde{Y}), (G_R(Y) -G_{R}(\widetilde{Y}) )'\|_{\textbf{D}^{2\alpha}_{W}}  (1+\|W\|_{\alpha}) (\|W\|_{\alpha} +\|\mathbb{W}\|_{2\alpha} )\\
	& \leq C[|A|] ~C(R) (1+\|W\|_{\alpha}) (\|W\|_{\alpha} +\|\mathbb{W}\|_{2\alpha} )\|Y - \widetilde{Y}, Y' -\widetilde{Y}'\|_{\textbf{D}^{2\alpha}_{W}}.
	\end{align*}
Putting all these estimates together we infer
	\begin{align}\label{ef}
	&	\left\|\int\limits_{0}^{\cdot} S(\cdot-r) (F_R(Y)(r) - F_R(\widetilde{Y})(r))~\txtd r + \int\limits_{0}^{\cdot} S(\cdot - r) 
	(G_R(Y)(r) -G_R(\widetilde{Y})(r)) ~\txtd \textbf{W}_{r}, 
	G_R(Y) - G_R(\widetilde{Y}) \right\|_{\textbf{D}^{2\alpha}_{W}}\nonumber\\ 
	&\leq  \Big(C[|A|,F] R + C[|A|,G]~ C(R) (1+\|W\|_{\alpha})(\|W\|_{\alpha} +\|\mathbb{W}\|_{2\alpha}) \Big) \|Y-\widetilde{Y}, Y'-\widetilde{Y}'\|_{\textbf{D}^{2\alpha}_{W}}.
	\end{align}
	Setting $\widetilde{Y}\equiv 0$ and using that $F_R(0)=G_R(0)=0$,  we see from the previous deliberations that $(Y,Y')\mapsto (T_R(W,Y,Y'), G_R(Y))$ maps $D^{2\alpha}_{W}([0,1];\cX)$ into itself. Furthermore, by choosing $R$ small enough and regarding that $C(R)\to 0$ as $R\to 0$, we obtain that the mapping $(Y,Y')\mapsto (T_R(W,Y,Y'), G_R(Y))$ is a contraction. Therefore, Banach's fixed-point theorem proves the statement.
\end{proof}

\begin{remark}
 For precise details and computations regarding the existence of solutions of RDEs (without the truncation) see~\cite[Thm.~8.4]{FritzHairer}. In this case, taking $W\in C^{\beta}$ for $1/3<\alpha<\beta\leq 1/2$ and using that $\|W\|_{\alpha}\leq T^{\beta-\alpha}\|W\|_{\beta}$, one shows that the fixed-point mapping,
   maps a ball in $D^{2\alpha}_{W}$ with $\|Y,Y'\|_{\textbf{D}^{2\alpha}_{W}}\leq 1$ into itself and is a contraction for a small time step $T$. This argument entails the existence of a unique solution for an RDE on a small interval $[0,T_0]$, which can be extended by a standard concatenation procedure on the whole time interval $[0,1]$.  
\end{remark}
Going back to our setting, the next aim is to characterize the parameter $R$ required in order to
 decrease the Lipschitz constants of $\overline{F}$ and $\overline{G}$ using $\chi_R$. This fact is also required in Section~\ref{lpm}. As already seen we have to choose $R$ as small as possible.\\
Since in our deliberations, it is always required that $R\leq 1$ and $C(R)\to 0$ as $R\to 0$,
 the estimate provided in~\eqref{ef} reads as
 \begin{align}
 &	\left\|\int\limits_{0}^{\cdot} S(\cdot-r) (F_R(Y)(r) - F_R(\widetilde{Y})(r) )~\txtd r+ \int\limits_{0}^{\cdot} S(\cdot - r) 
 (G_R(Y)(r) -G_R(\widetilde{Y})(r )) ~\txtd \textbf{W}_{r}, 
 G_R(Y) - G_R(\widetilde{Y}) \right\|_{\textbf{D}^{2\alpha}_{W}}\nonumber\\ 
 &\leq  \Big(C[|A|,F]~ R + C[|A|,G]~  R (1+\|W\|_{\alpha})(\|W\|_{\alpha} +\|\mathbb{W}\|_{2\alpha}) \Big) \|Y-\widetilde{Y}, Y'-\widetilde{Y}'\|_{\textbf{D}^{2\alpha}_{W}}.\label{e:finall}
 \end{align}
As commonly met in the theory of random dynamical systems~\cite{LuSchmalfuss, GarridoLuSchmalfuss, BloemkerWang}, since all the estimates depend on the random input, it is meaningful to employ a cut-off technique for a {\em random variable}, i.e.~$R=R(W)$. Such an argument will also be used here as follows.\\

We fix $K>0$ and regarding~\eqref{e:finall}, we let 
$\widetilde{R}({W})$ be the unique solution of 
\begin{equation}\label{k}
C[|A|,F]~\widetilde{R}({W}) + C[|A|,G]~\widetilde{R}({W}) (1+\|{W}\|_{\alpha})(\|{W}\|_{\alpha} 
+ \|\mathbb{W}\|_{2\alpha}) =K
\end{equation}
and set 
 \begin{align}\label{r}
 R({W}):=\min\{\widetilde{R}({W}),1\}.
 \end{align}
 This means that if $R(W)=1$, we apply the cut-off procedure for $\|Y,Y'\|_{\textbf{D}^{2\alpha}_{W}}\leq 1/2$ or else (if $R(W)<1$) for $\|Y,Y'\|_{\textbf{D}^{2\alpha}_{W}}\leq R(W)/2$. \\
 In conclusion, we work in the next sections with a modified version of~\eqref{sde1} (equivalently~\eqref{equiv:rde}), where the drift and diffusion coefficients $\overline{F}$ and $\overline{G}$ are replaced by $F_{R(W)}$ and $G_{R(W)}$. For notational simplicity, the $W$-dependence of $R$ will be dropped whenever there is no confusion.\\

Due to~\eqref{k} we conclude.
\begin{lemma}  
	Let $(Y,Y'), (\widetilde{Y},\widetilde{Y}')\in D^{2\alpha}_{{W}}([0,1];\cX)$. We have
	\begin{equation}
	\label{wanttohave}
	\|T_{R}({W},Y,Y') - T_{R}({W},\widetilde{Y}, \widetilde{Y}'), (T_{R}({W},Y,Y') - T_{R}({W},\widetilde{Y}, \widetilde{Y}'))'
	\|_{\textbf{D}^{2\alpha}_{{W}}} \leq K \|  Y- \widetilde{Y}, 
	Y'-\widetilde{Y}'\|_{\textbf{D}^{2\alpha}_{{W}}}.
	\end{equation}
\end{lemma}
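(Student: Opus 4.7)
The approach is essentially a direct substitution: the lemma follows by combining the Lipschitz estimate~\eqref{e:finall} derived for $T_R$ with the very definition of the random cut-off radius $R(W)$ specified by~\eqref{k} and~\eqref{r}. The heavy lifting has already been accomplished in the preceding lemmas: Lemma~\ref{lip:fr} handled the Lipschitz estimate for the truncated drift $F_R$, Lemma~\ref{composition:cutoff} dealt with the truncated diffusion $G_R$ at the level of the full $\textbf{D}^{2\alpha}_W$-norm, while Lemma~\ref{integralgub} and Lemma~\ref{drift} controlled the convolution-type rough and Lebesgue integrals that enter the definition of $T_R$. Putting these four ingredients together is exactly what produced~\eqref{ef}/\eqref{e:finall}.

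First I would record~\eqref{e:finall} with $R$ replaced by $R(W)$, which is legitimate because by~\eqref{r} we always have $R(W)\leq 1$ and the constant $C(R)$ appearing in Lemma~\ref{composition:cutoff} is strictly increasing, so $C(R(W))\leq C(1)$ is absorbed into the universal constants $C[|A|,F]$ and $C[|A|,G]$. This yields
\begin{align*}
\|T_R(W,Y,Y') - T_R(W,\widetilde{Y},\widetilde{Y}'),\ (T_R(W,Y,Y') - T_R(W,\widetilde{Y},\widetilde{Y}'))'\|_{\textbf{D}^{2\alpha}_W} \leq \lambda(W)\ \|Y-\widetilde{Y},\ Y'-\widetilde{Y}'\|_{\textbf{D}^{2\alpha}_W},
\end{align*}
where $\lambda(W) := C[|A|,F]\,R(W) + C[|A|,G]\,R(W)(1+\|W\|_\alpha)(\|W\|_\alpha + \|\mathbb{W}\|_{2\alpha})$.

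Second, I would split into two cases according to the value of $\widetilde{R}(W)$ defined by~\eqref{k}. If $\widetilde{R}(W)\leq 1$, then $R(W)=\widetilde{R}(W)$ by~\eqref{r}, and by the very definition~\eqref{k} we have $\lambda(W)=K$ exactly. If instead $\widetilde{R}(W)>1$, then $R(W)=1$; since the left-hand side of~\eqref{k} is linear in $\widetilde{R}$ with the same (positive) coefficients as $\lambda$, we obtain $\lambda(W) = K/\widetilde{R}(W) < K$. In either case $\lambda(W)\leq K$, which is precisely~\eqref{wanttohave}.

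I do not anticipate any substantive obstacle. The only delicate point is that the contraction constant in~\eqref{e:finall} must genuinely be an affine (in fact linear, with zero intercept) function of $R$ so that the two-case argument above works; this in turn relies on the observation that the factors $C_G$ in Lemma~\ref{composition:cutoff} and $C[|A|,F]$ in Lemma~\ref{lip:fr} are each multiplied by exactly one factor of $R$ coming from the bounds $|\chi_R(Y)_t|\leq R$ and from the vanishing of $F$, $\txtD G$, $\txtD^2 G$ at the origin via inequalities~\eqref{lip:g} and~\eqref{der}. This vanishing, imposed in hypotheses \textbf{(F)} and \textbf{(G)}, is what makes the random cut-off construction robust and allows $K$ in~\eqref{k} to be chosen arbitrarily small, which will be exploited in the forthcoming Lyapunov--Perron fixed-point argument of Section~\ref{lpm}.
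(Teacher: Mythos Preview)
Your proposal is correct and follows exactly the paper's approach: the paper offers no proof beyond the single line ``Due to~\eqref{k} we conclude,'' and your two-case argument simply spells out in detail why~\eqref{e:finall} together with the definitions~\eqref{k} and~\eqref{r} of $R(W)$ yields the claimed Lipschitz constant $K$.
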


\begin{remark}
	We emphasize that one needs to control the derivatives of the diffusion coefficient $G$ in order to make the constants that depend on $G$ small after the cut-off procedure. Such restrictions are often met in the context of invariant manifolds for S(P)DEs with nonlinear multiplicative noise, see e.g.~\cite{FuBloemker,GarridoLuSchmalfuss}.
\end{remark}
\begin{remark}\label{fbm}
	If the random input is smoother, i.e.~$\alpha\in(1/2,1)$, it is enough to assume only $DG(0)=0$. In this case, the stochastic convolution used in~\eqref{integraleq} is defined as a Young integral, compare~\cite{Young} and~\cite[Sec.~8.2]{FritzHairer}. Here, it suffices to work only with paths $Y\in C^{\alpha}$ ($Y'$ and $R^Y$ are no longer required). This essentially simplifies the computations in Sections~\ref{rde} and~\ref{cut}. In the framework of Section~\ref{cut}, one has to modify only the $\alpha$-H\"older norm of a path $Y$ and to control the difference $\|G_R(Y)-G_R(\widetilde{Y})\|_{\alpha}$ for two paths $Y,\widetilde{Y}\in C^{\alpha}$, see~\cite[Sec.~8.3]{FritzHairer}. 
\end{remark}

\section{Random Dynamics}
\label{rd}

The main techniques and results established in the previous section using 
controlled rough paths are necessary in order to provide pathwise 
estimates for the solutions of~\eqref{sde1}. In this section, we provide some 
concepts from the random dynamical systems theory~\cite{Arnold}, which allow us
to give a definition of an invariant manifold for~\eqref{sde1}; for even further 
information regarding random dynamical systems generated by RDEs, 
see~\cite[Sec.~3]{BailleulRiedelScheutzow}.\medskip
	
The next concept is fundamental in the theory of random dynamical systems, 
since it describes a model of the driving noise.

\begin{definition} 
Let $(\Omega,\mathcal{F},\mathbb{P})$ stand for a probability space and 
$\theta:\mathbb{R}\times\Omega\rightarrow\Omega$ be a family of 
$\mathbb{P}$-preserving transformations (i.e.,~$\theta_{t}\mathbb{P}=
\mathbb{P}$ for $t\in\mathbb{R}$) having following properties:
\begin{description}
		\item[(i)] the mapping $(t,\omega)\mapsto\theta_{t}\omega$ is 
		$(\mathcal{B}(\mathbb{R})\otimes\mathcal{F},\mathcal{F})$-measurable, where 
		$\mathcal{B}(\cdot)$ denotes the Borel sigma-algebra;
		\item[(ii)] $\theta_{0}=\textnormal{Id}_{\Omega}$;
		\item[(iii)] $\theta_{t+s}=\theta_{t}\circ\theta_{s}$ for all 
		$t,s,\in\mathbb{R}$.
\end{description}
Then the quadrupel $(\Omega,\mathcal{F},\mathbb{P},(\theta_{t})_{t\in\mathbb{R}})$ 
is called a metric dynamical system.
\end{definition}

In our context, constructing a metric dynamical system is going to rely on
constructing $\theta$ as a shift map. We start to check that shifts act quite 
naturally on rough paths. For an $\alpha$-H\"older rough path $\textbf{W}=(W,\mathbb{W})$ 
and $\tau\in\mathbb{R}$ let us define the time-shift $\Theta_{\tau}\textbf{W}
:=(\Theta_{\tau}W,\Theta_{\tau}\mathbb{W})$ by
\begin{align*}
& \Theta_{\tau} W_t : =W_{t+\tau} - W_{\tau}\\
& \Theta_{\tau}\mathbb{W}_{s,t}:=\mathbb{W}_{s+\tau,t+\tau}
\end{align*}
Note that 
$\Theta_{\tau} W_{s,t}=W_{t+\tau} - W_{s+\tau}$. Furthermore, the shift leaves 
the path space invariant:

\begin{lemma}
\label{shift} 
Let $T_{1}, T_{2},\tau\in\mathbb{R}$, and $\bm{W}=(W,\mathbb{W})$ 
be an $\alpha$-H\"older rough path on $[T_{1},T_{2}]$ for $\alpha\in(1/3,1/2)$. 
Then the time-shift $\Theta_{\tau}\bm{W}= (\Theta_{\tau}W, 
\Theta_{\tau}\mathbb{W})$ is also an 
$\alpha$-H\"older rough path on $[T_{1}-\tau, T_{2}-\tau]$.
\end{lemma}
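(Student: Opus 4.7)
The plan is to verify directly that $\Theta_{\tau}\bm{W}$ satisfies the three defining properties of an $\alpha$-Hölder rough path on the shifted interval: analytic regularity of the level-one path, analytic regularity of the level-two object, and Chen's relation \eqref{chen}. Nothing deep is involved; the key observation is that shifting by $\tau$ is an isometry of time differences, so the Hölder semi-norms are genuinely invariant.

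First I would compute the increments of $\Theta_{\tau}W$. For $s,t\in[T_{1}-\tau,T_{2}-\tau]$ one has $(\Theta_{\tau}W)_{s,t}=\Theta_{\tau}W_{t}-\Theta_{\tau}W_{s}=(W_{t+\tau}-W_{\tau})-(W_{s+\tau}-W_{\tau})=W_{s+\tau,t+\tau}$, so
\benn
|(\Theta_{\tau}W)_{s,t}|=|W_{s+\tau,t+\tau}|\leq \|W\|_{\alpha;[T_{1},T_{2}]}\,|t-s|^{\alpha},
\eenn
since $s+\tau,t+\tau\in[T_{1},T_{2}]$ and $|(t+\tau)-(s+\tau)|=|t-s|$. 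Hence $\Theta_{\tau}W\in C^{\alpha}([T_{1}-\tau,T_{2}-\tau];\cV)$ with $\|\Theta_{\tau}W\|_{\alpha}\leq \|W\|_{\alpha}$. The identical argument applied to $\Theta_{\tau}\mathbb{W}_{s,t}=\mathbb{W}_{s+\tau,t+\tau}$ yields $\Theta_{\tau}\mathbb{W}\in C^{2\alpha}([T_{1}-\tau,T_{2}-\tau]^{2};\cV\otimes\cV)$ with $\|\Theta_{\tau}\mathbb{W}\|_{2\alpha}\leq\|\mathbb{W}\|_{2\alpha}$.

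It then remains to check Chen's relation \eqref{chen} on the shifted interval. For $s,u,t\in[T_{1}-\tau,T_{2}-\tau]$,
\begin{align*}
\Theta_{\tau}\mathbb{W}_{s,t}-\Theta_{\tau}\mathbb{W}_{s,u}-\Theta_{\tau}\mathbb{W}_{u,t}
&=\mathbb{W}_{s+\tau,t+\tau}-\mathbb{W}_{s+\tau,u+\tau}-\mathbb{W}_{u+\tau,t+\tau}\\
&=W_{s+\tau,u+\tau}\otimes W_{u+\tau,t+\tau}\\
&=(\Theta_{\tau}W)_{s,u}\otimes(\Theta_{\tau}W)_{u,t},
\end{align*}
where the second line is Chen's relation for $\bm{W}$ and the third uses the increment identity derived above. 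This yields Chen's relation for $\Theta_{\tau}\bm{W}$ and completes the argument.

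There is no real obstacle: every step reduces to an elementary change of variables $r\mapsto r+\tau$. The only point requiring minor care is bookkeeping of the domains so that the shifted triples $(s+\tau,u+\tau,t+\tau)$ still lie in $[T_{1},T_{2}]$ when invoking the hypotheses on $\bm{W}$; this is automatic from the choice of shifted interval in the statement.
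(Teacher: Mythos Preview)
Your proof is correct and follows essentially the same approach as the paper: the paper declares the H\"older regularity of $\Theta_{\tau}W$ and $\Theta_{\tau}\mathbb{W}$ ``obvious'' and then verifies Chen's relation by exactly the computation you give. Your version is simply more explicit about the regularity step, which is fine.
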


\begin{proof}
Let $s,u,t\in [T_{1}-\tau, T_{2}-\tau]$. The $\alpha$-H\"older-continuity 
of $\theta_{\tau}W$ and the $2\alpha$-H\"older continuity of $\theta_{\tau}\mathbb{W}$ 
are obvious. We only prove that Chen's relation~\eqref{chen} holds true. We have
\begin{align}
\Theta_{\tau} \mathbb{W}_{s,t} - \Theta_{\tau} \mathbb{W}_{s,u} 
-\Theta_{\tau}\mathbb{W}_{u,t} &= \mathbb{W}_{s+\tau,t+\tau} 
- \mathbb{W}_{s+\tau,u+\tau} - \mathbb{W}_{u+\tau,t+\tau} \nonumber \\
	& = W_{s+\tau,u+\tau} \otimes W_{u+\tau,t+\tau} \label{chena},\\
	&=(W_{u+\tau} -W_{\tau} - W_{s+\tau} +W_{\tau} ) \otimes (W_{t+\tau} 
	- W_{\tau} -W_{u+\tau} + W_{\tau} ) \nonumber \\
	& = \Theta_{\tau} W_{s,u} \otimes \Theta_{\tau} W_{u,t}. \nonumber
\end{align}
where in~\ref{chena} we use Chen's relation~\eqref{chen}.
\end{proof}

Based upon~\cite{BailleulRiedelScheutzow} we consider the following concept:

\begin{definition}
\label{rpc}
Let $(\Omega,\mathcal{F},\mathbb{P},(\theta_{t})_{t\in\mathbb{R}})$ be a 
metric dynamical system. We call $\textbf{W}=(W,\mathbb{W})$ a rough path 
cocycle if the identity
\begin{align*}
\textbf{W}_{s,s+t}(\omega)=\textbf{W}_{0,t}(\theta_{s}\omega)
\end{align*}
holds true for every $\omega\in\Omega$, $s\in\mathbb{R}$ and $t\geq 0$.
\end{definition}

The previous definitions hint already at the fact that one may be 
able to use as a probability space $\Omega$ a space of paths. We specify this construction for the case of a fractional Brownian motion, see~\cite{BailleulRiedelScheutzow} and~\cite[Sec.~6]{HesseNeamtu2}. Before we provide such a construction, we discuss the following {\em distance concept} between two $\alpha$-H\"older rough paths, see~\cite[Ch.~2]{FritzHairer}. To this aim let $J\subset\mathbb{R}$ be a compact interval and set
\begin{align*}
\Delta_{J}:=\{(s,t)\in J\times J : t\geq s\}.
\end{align*}
\begin{definition}
	Let $\textbf{W}^1=(W^1,\mathbb{W}^1)$ and $\textbf{W}^2=(W^2,\mathbb{W}^2)$ be two $\alpha$-H\"older rough paths. We introduce the $\alpha$-H\"older rough path (inhomogenuous) metric
	\begin{align}\label{rpmetric}
	d_{a,J}(\textbf{W}^1,\textbf{W}^2) = \sup\limits_{(s,t)\in\Delta_{J}} \frac{|W^1_t-W^1_s-W^2_t+W^2_s|}{|t-s|^{\alpha}} + \sup\limits_{(s,t)\in\Delta_{J}} \frac{|\mathbb{W}^1_{st}- \mathbb{W}^2_{st}| }{|t-s|^{2\alpha}}.
	\end{align}
	
\end{definition}
We stress that we only work with $\alpha$-H\"older processes $W$ such that $W(0)=0$ and therefore~\eqref{rpmetric} is a metric.
\begin{example}
\label{eq:Bmain}
As a concrete example for $W$ we consider fractional Brownian motion 
$B^{H}$ restricted to any compact interval $[-L,L]$ with $L\geq 1$ and for $H\in(1/3,1/2]$. 
This includes Brownian motion for $H=1/2$. In order to define a metric dynamical system, we introduce the canonical probability
space $(C_{0}(\mathbb{R};\mathbb{R}^{d}),\mathcal{B}(C_{0}(\mathbb{R};
\mathbb{R}^{d})),\mathbb{P} )$, where $C_{0}(\mathbb{R};\mathbb{R}^{d})$ 
denotes the space of all $\mathbb{R}^{d}$-valued continuous functions, which are $0$ 
in $0$, endowed with the compact open topology and $\mathbb{P}$ is the fractional Gau\ss{} measure. The shift on this 
space is given by
\begin{align}
(\Theta_{\tau} f) (\cdot) := f(\tau+\cdot) - f(\tau),  
~ \tau\in\mathbb{R}, \quad f\in C_{0}(\R,\mathbb{R}^{d}).
\end{align}
Furthermore, we restrict it to the set $\Omega:=C^{\alpha'}_{0}(\mathbb{R},\mathbb{R}^d)$, of all $\alpha'$-H\"older continuous paths on any compact interval $J\subset\mathbb{R}$, where $\frac{1}{3}<\alpha<\alpha'< H <\frac{1}{2}$. Note that we consider $\alpha'$-H\"older functions in order to lift $(W,\mathbb{W})$ to an $\alpha$-H\"older rough path.  We further equip this set with the trace $\sigma$-algebra $\mathcal{F}:=\Omega\cap \mathcal{B}(C_{0}(\mathbb{R};\mathbb{R}^d))$ and take the restriction of $\mathbb{P}$ as well. Then the set $\Omega\subset C_{0}(\mathbb{R};\mathbb{R}^d)$ has full measure and is $\Theta$-invariant. The new quadrupel $(\Omega,\mathcal{F},\mathbb{P},\Theta)$ builds again a metric dynamical system, which will be further restricted as follows. Namely, one can show that there exists a $\Theta$-invariant subset $\Omega_{B}\subset \Omega$ of full measure such that for any $W\in\Omega_{B}$ and any compact interval $J\subset \mathbb{R}$ there exists a L\'evy-area $\mathbb{W}\in C^{2\alpha}(\Delta_{J};\mathbb{R}^d\otimes\mathbb{R}^d)$ such that $\textbf{W}=(W,\mathbb{W})$ is an $\alpha$-H\"older rough path. This can further be approximated by a sequence $W^n:=((W^n,\mathbb{W}^n))_{n\in\mathbb{N}}$ in the $d_{\alpha,J}$ metric. Here $(W^{n})_{n\in\mathbb{N}}$ are piecewise dyadic linear functions. In this case, the iterated integral $\mathbb{W}^n$ exists in the classical sense, i.e.
\begin{align}\label{approx}
\mathbb{W}^n_{s,t}=\int\limits_{s}^{t} (W^n_r- W^n_s)~ \txtd W^{n}_r,~~\mbox{ for }\in\Delta_J.
\end{align} 
Assuming that $J\subseteq [-L,L]$ we have according to~\cite[Thm.~2]{CoutinQian} (see also~\cite[Cor.~10]{FritzHairer}) that the iterated integral
\begin{align*}
\mathbb{W}_{s,t}:=\int\limits_{s}^{t} (W_r - W_s)~\txtd W_r,~~\mbox{ for } -L\leq s\leq t \leq L.
\end{align*}
exists almost surely. Moreover, one also has $\mathbb{W}^n\to \mathbb{W}$ in $C^{2\alpha}(\Delta_{[-L,L]};\mathbb{R}\otimes\mathbb{R}^d)$ almost surely. Since $J\subseteq[-L,L]$ one immediately obtains that $\textbf{W}^n$ converges to $\textbf{W}$ with respect to the $d_{\alpha,J}$ metric. In conclusion, one obtains the existence of the L\'evy area $\mathbb{W}$ which can be approximated by~\eqref{approx} on a compact interval $[-L,L]$. Glueing these lifts together, one obtains a process $\mathbb{W}$ on the whole real line.
Since $\textbf{W}^n$ converges to $\textbf{W}$ with respect to the $d_{\alpha,J}$-metric one can conclude that $\Omega_B$ has full measure and is $\Theta$-invariant.
Consequently, taking $(\Omega_{B},\mathcal{F}_{B}, \mathbb{P}_{B},\Theta)$ as introduced above, one has that the fractional Brownian motion
$\textbf{B}^{H}=(B^{H},\mathbb{B}^{H})$ represents a rough 
path cocycle as introduced in Definition~\ref{rpc}. 
\end{example}
We observe that piecewise linear approximations are used to construct the L\'evy-area~\eqref{omega2}. This is a fundamental property of {\em geometric} $\alpha$-H\"older rough paths, see~\cite[Prop.~2.5]{FritzHairer}.
Of course, the same construction of an appropriate path-space $(\Omega_W,\cF_W,\mathbb{P})$,
can be carried out for more general geometric $\alpha$-H\"older rough paths $\textbf{W}=
(W,\mathbb{W})$ constructed from a Gaussian stochastic process with stationary increments $W_t$, not just fractional 
Brownian motion, where the definition of a shift map is still as above, i.e., 
\benn
(\Theta_{\tau} W )(t) : =W_{t+\tau} - W_{\tau}.
\eenn
For further details on such statements, see~\cite[Cor.~9]{BailleulRiedelScheutzow}.
We have the abstract definition of, as well as concrete examples for,
metric dynamical systems for our problem modelling the underlying rough driving
process (or ``noise'').

Now we have to also define the dynamical system structure 
of the solution operator of our RDE. As a first step we recall the 
classical definition of a random dynamical system~\cite{Arnold}.

\begin{definition}
\label{rds} 
A random dynamical system on $\cX$ over a metric dynamical 
system $(\Omega,\mathcal{F},\mathbb{P},(\theta_{t})_{t\in\mathbb{R}})$ 
is a mapping $$\varphi:[0,\I)\times\Omega\times \cX\to \cX,
\mbox{  } (t,\omega,x)\mapsto \varphi(t,\omega,x), $$
which is $(\mathcal{B}([0,\I))\times\mathcal{F}\times
\mathcal{B}(\cX),\mathcal{B}(\cX))$-measurable and satisfies:
	\begin{description}
		\item[(i)] $\varphi(0,\omega,\cdot{})=\textnormal{Id}_{\cX}$ 
		for all $\omega\in\Omega$;
		\item[(ii)]$ \varphi(t+\tau,\omega,x)=
		\varphi(t,\theta_{\tau}\omega,\varphi(\tau,\omega,x)), 
		\mbox{ for all } x\in \cX, ~t,\tau\in[0,\I),~\omega\in\Omega;$
		\item[(iii)] $\varphi(t,\omega,\cdot{}):\cX\to \cX$ is 
		continuous for all $t\in[0,\I)$ and all $\omega\in\Omega$.
	\end{description}
\end{definition}

The second property in Definition~\ref{rds} is referred to as the 
cocycle property. One can now expect that 
the solution operator of~\eqref{sde1} generates a random dynamical 
system. Indeed, working with a pathwise interpretation of the 
stochastic integral as given in~\eqref{Gintegral}, no exceptional 
sets can occur. For completeness, we indicate a proof of this
fact, see also~\cite{BailleulRiedelScheutzow}.
 
\begin{lemma}
\label{cocycle} 
Let $\textbf{W}$ be a rough path cocycle. 
Then the solution operator
\begin{align*}
t\mapsto \varphi(t,W,\xi)=U_{t}= S(t)\xi + \int\limits_{0}^{t}S(t-r)F(U_{r})~\txtd r 
+ \int\limits_{0}^{t} S(t-r)G(U_{r})~\txtd \bm{{W}}_{r}, 
\end{align*}
for any $t\in[0,\I)$ of the RDE~\eqref{sde1} generates a random dynamical system
over the metric dynamical system $(\Omega_W,\cF_W,\mathbb{P},(\Theta_t)_{t\in\R})$.
\end{lemma}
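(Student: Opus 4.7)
The plan is to verify the three defining properties of Definition~\ref{rds} in turn. The initial-value property (i) is immediate from the integral representation: at $t=0$ the drift and rough integrals vanish and $S(0)=\mathrm{Id}$. Continuity in $\xi$, property (iii), follows from the contraction estimates obtained in Section~\ref{rde}: the fixed-point giving $U=U_{\cdot}(\xi)$ depends Lipschitz-continuously on the initial data, as can be read off from~\eqref{e:finall} applied to two solutions with different initial conditions (and then extended globally by concatenation). Joint measurability in $(t,\omega,\xi)$ is inherited from the pathwise definition~\eqref{Gintegral} of the rough integral as a limit of compensated Riemann sums, together with continuous dependence of the fixed-point on the driving rough path $\textbf{W}$, which in turn is measurable as a function of $\omega$ on $\Omega_W$.

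The substantive step is the cocycle identity (ii). Fix $\tau,t\ge 0$, $\xi\in\cX$, and $\omega\in\Omega_W$, and let $U$ denote the global solution on $[0,\tau+t]$ with $U_0=\xi$, so that $\varphi(\tau,\omega,\xi)=U_\tau$. Define $V_s:=U_{\tau+s}$ for $s\in[0,t]$. The plan is to show that $V$ satisfies the RDE driven by the shifted rough path $\Theta_\tau\textbf{W}$ with initial condition $U_\tau$, and then to invoke uniqueness (Theorem~\ref{globalexistence}). Using the semigroup property $S(\tau+s)=S(s)S(\tau)$ and splitting the integrals in~\eqref{integraleq} at $\tau$ gives
\begin{equation*}
V_s = S(s)U_\tau + \int_\tau^{\tau+s} S(\tau+s-r)F(U_r)\,\txtd r + \int_\tau^{\tau+s} S(\tau+s-r)G(U_r)\,\txtd\textbf{W}_r.
\end{equation*}
The change of variable $r\mapsto r-\tau$ in the Lebesgue integral is routine. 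For the rough integral one goes back to the definition~\eqref{Gintegral}: given a partition $\mathcal{P}$ of $[\tau,\tau+s]$, its translate $\mathcal{P}-\tau$ partitions $[0,s]$, and for any $[u,v]\in\mathcal{P}$ one has, by Lemma~\ref{shift} and the definition of the shift, $W_{u,v}=(\Theta_\tau W)_{u-\tau,v-\tau}$ and $\mathbb{W}_{u,v}=(\Theta_\tau\mathbb{W})_{u-\tau,v-\tau}$. Thus the compensated sums agree term-by-term with the Gubinelli derivative $(S(\tau+s-\cdot)G(U_\cdot))'=S(\tau+s-\cdot)\txtD G(U_\cdot)G(U_\cdot)$ translating consistently into $(S(s-\cdot)G(V_\cdot))'$. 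Passing to the limit yields
\begin{equation*}
\int_\tau^{\tau+s} S(\tau+s-r)G(U_r)\,\txtd\textbf{W}_r \;=\; \int_0^{s} S(s-r)G(V_r)\,\txtd(\Theta_\tau\textbf{W})_r,
\end{equation*}
so that $V$ solves~\eqref{integraleq} on $[0,t]$ with data $U_\tau$ and driver $\Theta_\tau\textbf{W}$. Since $\textbf{W}$ is a rough path cocycle (Definition~\ref{rpc}), $\Theta_\tau\textbf{W}(\omega)=\textbf{W}(\theta_\tau\omega)$, and uniqueness gives $V_s=\varphi(s,\theta_\tau\omega,U_\tau)$, i.e., $\varphi(t+\tau,\omega,\xi)=\varphi(t,\theta_\tau\omega,\varphi(\tau,\omega,\xi))$.

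The main obstacle I anticipate is the rigorous identification of the two rough integrals under the time shift: the compensated sum~\eqref{Gintegral} involves not just the path increments but also the Gubinelli derivative of the integrand $S(\tau+s-\cdot)G(U_\cdot)$, so one must check that both components transform coherently with the shift and, in particular, that the pair $(S(s-\cdot)G(V_\cdot),(S(s-\cdot)G(V_\cdot))')$ lies in $D^{2\alpha}_{\Theta_\tau W}([0,s];\mathcal{L}(\cV,\cX))$ with the expected remainder. Once this translation invariance of the controlled rough integral is established (a direct consequence of Lemma~\ref{shift} and the partition-wise equality above), the cocycle property follows from uniqueness without further analytical input.
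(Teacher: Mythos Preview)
Your proposal is correct and follows essentially the same approach as the paper: both split the integrals at $\tau$, verify via partitions that the rough integral transforms correctly under the time shift $\Theta_\tau$ (the paper records this as formula~\eqref{shiftintegral} after checking $(U_{\cdot+\tau},U'_{\cdot+\tau})\in D^{2\alpha}_{\Theta_\tau W}$), and conclude via uniqueness. The only point the paper makes slightly more explicit is measurability, which it obtains by approximating $(W,\mathbb{W})$ by smooth paths $(W^n,\mathbb{W}^n)$ and invoking continuous dependence of the solution on the driver; your sketch gestures at the same mechanism.
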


\begin{proof}
The relevant properties to define the metric dynamical system we need
have been discussed in Example~\eqref{eq:Bmain}. The only difficulty
is checking cocycle property for the solution operator. We calculate
\begin{align*}
U_{t+\tau} &=S(t+\tau)\xi + \int\limits_{0}^{t+\tau} S(t+\tau-r) 
F(U_{r}) ~\txtd r + \int\limits_{0}^{t+\tau} S(t+\tau-r) 
G(U_{r}) ~\txtd \bm{{W}}_{r}\\
	& = S(t) S(\tau)\xi + \int\limits_{0}^{\tau}  S(t+\tau- r) 
	F(U_{r}) ~\txtd r + \int\limits_{\tau}^{t+\tau} S(t+\tau-r) 
	F(U_{r}) ~\txtd r\\
	& + \int\limits_{0}^{\tau}S(t+\tau-r) G(U_{r}) ~\txtd 
	\bm{{W}}_{r} + \int\limits_{\tau}^{t+\tau} S(t+\tau-r) G(U_{r}) 
	~\txtd \bm{{W}}_{r}\\
	&=S(t) \left( S(\tau)\xi + \int\limits_{0}^{\tau} S(\tau-r) 
	F(U_{r}) ~\txtd r + \int\limits_{0}^{\tau} S(\tau-r)G(U_{r}) ~\txtd 
	\bm{{W}}_{r}  \right)\\
	& + \int\limits_{0}^{t}  S(t-r) F(U_{r+\tau}) ~\txtd r + 
	\int\limits_{0}^{t} S(t-r) G(U_{r+\tau}) ~\txtd \Theta_{\tau}
	\bm{{W}}_{r}\\
	& = S(t) U_{\tau} + \int\limits_{0}^{t} S(t-r) F(U_{r+\tau}) 
	~\txtd r + \int\limits_{0}^{t}S(t-r)G(U_{r+\tau}) ~\txtd 
	\Theta_{\tau}\bm{{W}}_{r}.
\end{align*}
The above computation are rigorously justified, since one can 
check that if $(U,U')\in D^{2\alpha}_{{W}}([T_{1}+\tau,T_{2}+\tau];
\cX)$ then $(U_{\cdot +\tau}, U'_{\cdot+\tau} ) \in 
D^{2\alpha}_{\Theta_{\tau}W}([T_{1},T_{2}];\cX)$. 
Here $T_{1}, T_{2}\in\mathbb{R}$ with $T_{1}<T_{2}$. 
The $\alpha$-H\"older continuity of $U_{\cdot+\tau}$ and 
$U'_{\cdot+\tau}$ is obvious. For the remainder we have
\begin{align*}
	|R^{U_{\cdot+\tau}}_{s,t} - U'_{s+\tau} \Theta_{\tau}{W}_{s,t}| 
	= |U_{s+\tau,t+\tau} - U'_{s+\tau} {W}_{s+\tau,t+\tau} | 
	= |R^{U}_{s+\tau,t+\tau}|\leq \|R^{U}\|_{2\alpha}(t-s)^{2\alpha}.
\end{align*}
Furthermore, to show the shift property of the rough integral, we take 
a partition $\mathcal{P}$ of $[T_{1},T_{2}]$ and have
\begin{align}\label{shiftintegral}
	\int\limits_{T_{1}}^{T_{2}} U_{r+\tau}~\txtd\Theta_{\tau} \bm{{W}}_{r} 
	&= \lim\limits_{|\mathcal{P}|\to 0 }	\sum\limits_{[s,t]\in\mathcal{P}} 
	(U_{s+\tau} \Theta_{\tau} {W}_{s,t} + U'_{s+\tau}\Theta_{\tau}
	\mathbb{W}_{s,t} )\nonumber\\ 
	&= \lim\limits_{|\mathcal{P}|\to 0 }	
	\sum\limits_{[s,t]\in\mathcal{P}} U_{s+\tau} {W}_{s+\tau,t+\tau} 
	+ U'_{s+\tau} \mathbb{W}_{s+\tau,t+\tau}\nonumber\\
	& = \lim\limits_{|\mathcal{\widetilde{P}}|\to 0 } 
	\sum\limits_{[\widetilde{s},\widetilde{t}]\in\mathcal{\widetilde{P}}}  
	U_{\widetilde{s}} {W}_{\widetilde{s},\widetilde{t}} + U'_{\widetilde{s}} 
	\mathbb{W}_{\widetilde{s}, \widetilde{t}} = \int\limits_{
	T_{1}+\tau}^{T_{2}+\tau} U_{r} ~\txtd \bm{{W}}_{r}.
\end{align}
Here $\mathcal{\widetilde{P}}$ is a partition of $[T_{1}+
\tau,T_{2}+\tau]$ given by $\mathcal{\widetilde{P}} : = 
\{ [s+\tau,t+\tau] \mbox{ : }[s,t]\in\mathcal{P}\}$.\\

The $(\mathcal{B}([0,\infty))\times\mathcal{F}_{W}\times\mathcal{B}(\cX), \mathcal{B}(\cX))$-measurability of $\varphi$ follows be well-known arguments. One considers a sequence of (classical) solutions $(U^{n},(U^{n})')_{n\in\mathbb{N}}$ of~\eqref{sde1} corresponding to smooth approximations $(W^{n},\mathbb{W}^{n})_{n\in\mathbb{N}}$ of $(W,\mathbb{W})$. Obviously, the mapping $(t,W,\xi)\mapsto U^{n}_{t}$ is $(\mathcal{B}([0,T])\times\mathcal{F}_{W}\times\mathcal{B}(\cX), \mathcal{B}(\cX))$-measurable for any $T>0$.
Since $U$ continuously depends on the rough input $W$, according to Thm.~8.5~in~\cite{FritzHairer}, one immediately concludes that $\lim\limits_{n\to\infty}U^{n}_t=U_{t}$.
This gives the measurability of $U$ with respect to $\mathcal{F}_{W}\times \mathcal{B}(\cX)$. Due to the time-continuity of $U$, we obtain by Ch.~3~in~\cite{CastaingValadier} the $(\mathcal{B}([0,T])\times\mathcal{F}_{W}\times\mathcal{B}(\cX), \mathcal{B}(\cX))$-measurability of the mapping $(t,\omega,\xi)\mapsto U_{t}$ for any $T>0$.
\end{proof}
	
Note that the role of the random elements in $\Omega_W$ is played by
the paths $W$, recall Example~\ref{eq:Bmain} for the construction of such a space for the case of the fractional Brownian motion. As a convention, we directly denote these elements by $W\in \Omega_{W}$ (as in Example~\ref{eq:Bmain}) and do not employ the 
identification $W_t(\omega):=\omega(t)$. 
The random dynamical system $\varphi:\mathbb{R}^{+}\times \Omega\times \R^n 
\to \R^n$ obviously
depends upon the $t,\xi,W$, and $\mathbb{W}$ although we do not directly
display the dependence upon $\mathbb{W}$ in the notation.\medskip

To construct local random invariant manifolds, which can be characterized 
by the graph of a smooth function in a ball with a certain 
radius~\cite{CaraballoDuanLuSchmalfuss,DuanLuSchmalfuss,GarridoLuSchmalfuss} 
one requires the concept of tempered random variables~\cite[Chapter 4]{Arnold},
which we recall next:

\begin{definition}\label{def:tempered}
A random variable $\widetilde{R}:\Omega\to (0,\infty)$ is called tempered from above, with 
respect to a metric dynamical system $(\Omega,\mathcal{F},\mathbb{P},
(\theta_{t})_{t\in\mathbb{R}})$, if
\begin{equation}
\label{tempered}
\limsup\limits_{t\to\pm\infty}\frac{\ln^{+} \widetilde{R} (\theta_{t}\omega)}{t}=0, 
\quad \mbox{ for all } \omega\in\Omega,
\end{equation}	
where $\ln^+a:=\max\left\{\ln a,0\right\}$.
A random variable is called tempered from below if $1/\widetilde{R}$ is tempered from above.
A random variable is tempered if and only if is tempered from above and from below.
\end{definition}

We emphasize that temperedness is equivalent to {\em subexponential growth}. This concept is crucial for computations, since one has to control the growth of a random variable along the orbits $(\theta_t)$.
Note that the set of all $\omega\in\Omega$ satisfying~\eqref{tempered} 
is invariant with respect to any shift map $(\theta_{t})_{t\in\mathbb{R}}$,
which is an observation applicable to our case when $\theta_t=\Theta_t$. A 
sufficient condition for temperedness from above is according to~\cite[Prop.~4.1.3]{Arnold} 
that
\begin{equation}\label{c:temp}
\mathbb{E} \sup\limits_{t\in[0,1]}  \widetilde{R}(\theta_{t}\omega)<\infty.
\end{equation}
Moreover, if the random variable $\widetilde{R}$ is tempered from below with 
$t\mapsto \widetilde{R}(\theta_{t}\omega)$ continuous for all $\omega\in\Omega$, 
then for every $\varepsilon>0$ there exists a constant $C[\varepsilon,\omega]>0$ 
such that
\begin{equation}
\label{temperedmanradius}
\widetilde{R}(\theta_{t}\omega) \geq C[\varepsilon,\omega] \txte^{-\varepsilon|t|},
\end{equation}
for any $\omega\in\Omega$. Again, for our concrete example $\Omega=\Omega_B$
we have.

\begin{lemma}
\label{ltempered} 
Let $\textbf{B}^H=(B^H,\mathbb{B}^H)$ be the rough path cocycle associated to a fractional 
Brownian motion $B^H$ with Hurst parameter $H\in(1/3,1/2]$. Then the random variables 
\benn
R_1(B^H)=\|B^H\|_{\alpha}\quad \text{ and }\quad R_2(\mathbb{B}^H)=\|\mathbb{B}^H \|_{2\alpha}  
\eenn
are tempered from above.
\end{lemma}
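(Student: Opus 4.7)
The plan is to verify the sufficient condition~\eqref{c:temp} for each random variable, namely
\benn
\E \sup_{t\in[0,1]} R_i(\Theta_t \textbf{B}^H) < \infty, \qquad i=1,2,
\eenn
which by~\cite[Prop.~4.1.3]{Arnold} implies temperedness from above. Thanks to the definition of the shift $\Theta_t$ acting by $(\Theta_t B^H)_s = B^H_{s+t} - B^H_t$ and $(\Theta_t \mathbb{B}^H)_{s,u} = \mathbb{B}^H_{s+t,u+t}$, the $\alpha$-H\"older semi-norm and the $2\alpha$-H\"older semi-norm on $[0,1]$ (resp.\ on $\Delta_{[0,1]}$) of the shifted objects are invariant under translation by a constant. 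Consequently
\benn
\sup_{t\in[0,1]} \|\Theta_t B^H\|_{\alpha,[0,1]} \leq \|B^H\|_{\alpha,[0,2]},\qquad
\sup_{t\in[0,1]} \|\Theta_t \mathbb{B}^H\|_{2\alpha,[0,1]^2} \leq \|\mathbb{B}^H\|_{2\alpha,[0,2]^2},
\eenn
so the task reduces to bounding the expectation of the H\"older semi-norms of $B^H$ and $\mathbb{B}^H$ on the fixed compact interval $[0,2]$.

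For $R_1$, the first step is to recall that $B^H$ is a centred Gaussian process with stationary increments satisfying $\E|B^H_t - B^H_s|^2 = |t-s|^{2H}$. Since $\alpha < H$, the Kolmogorov--Chentsov criterion (or the Garsia--Rodemich--Rumsey lemma) yields that $\|B^H\|_{\alpha,[0,2]}$ is almost surely finite, and Fernique's theorem applied to the measurable semi-norm $\|\cdot\|_{\alpha,[0,2]}$ on the Gaussian path space gives $\E \exp(\eta \|B^H\|_{\alpha,[0,2]}^2) < \infty$ for some $\eta>0$. In particular $\E \|B^H\|_{\alpha,[0,2]} < \infty$, which via~\eqref{c:temp} establishes temperedness of $R_1$.

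For $R_2$ the argument is analogous but one must work with the L\'evy-area. Here I would invoke the Gaussian rough-path machinery of~\cite[Ch.~10]{FritzHairer}: for $H\in (1/3,1/2]$ the covariance of $B^H$ has finite $1/(2H)$-variation, so the enhancement $\mathbb{B}^H$ is well-defined almost surely as an $\alpha$-H\"older rough path for every $\alpha<H$. The random variable $\mathbb{B}^H_{s,u}$ lies in the second Wiener chaos of $B^H$, hence by hypercontractivity all its $L^p$-moments are controlled by its $L^2$-moment, which yields the sharp estimate $\E|\mathbb{B}^H_{s,u}|^p \leq C_p |u-s|^{2Hp}$. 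A second application of Kolmogorov's criterion (in two parameters, using Chen's relation to control the joint regularity) delivers $\E \|\mathbb{B}^H\|_{2\alpha,[0,2]^2}^p < \infty$ for all $p\geq 1$ and every $\alpha<H$, compare~\cite[Thm.~10.4]{FritzHairer}. Combining this with the above reduction and~\eqref{c:temp} completes the proof of temperedness for $R_2$.

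The main technical point is the second step: obtaining moment bounds for $\mathbb{B}^H$ requires the finite $1/(2H)$-variation of the covariance function (which forces $H>1/4$, comfortably covered here) together with the chaos estimate; for $R_1$ the Gaussian concentration is essentially immediate. Once these moment bounds are in place, temperedness from above follows for free from the sufficient condition~\eqref{c:temp}.
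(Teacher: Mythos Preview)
Your proposal is correct and follows essentially the same approach as the paper: both verify the sufficient condition~\eqref{c:temp} via the moment bounds $\E\|B^H\|_\alpha^m<\infty$ and $\E\|\mathbb{B}^H\|_{2\alpha}^m<\infty$ from~\cite[Thm.~10.4]{FritzHairer}, and then invoke~\cite[Prop.~4.1.3]{Arnold}. The paper's proof is more terse (it simply cites the moment bounds and asserts the conclusion), whereas you spell out the reduction $\sup_{t\in[0,1]}\|\Theta_t(\cdot)\|\leq\|\cdot\|_{[0,2]}$ and sketch the Gaussian/hypercontractivity reasons behind the moment estimates, but the underlying argument is the same.
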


\begin{proof}
The first assertion is valid due to the fact that 
$\mathbb{E}\|B^{H}\|^{m}_{\alpha}<\infty$ and the second one follows regarding that
$\mathbb{E}\|\mathbb{B}^{H}\|^{m}_{2\alpha}<\infty$, for $m\in\mathbb{N}$ 
as contained in~\cite[Thm.~10.4]{FritzHairer}. This implies that $\mathbb{E}\sup\limits_{t\in[0,1]}\|\Theta_tB^{H}\|_{\alpha}<\infty$ and $\mathbb{E}\sup\limits_{t\in[0,1]}\|\Theta_t \mathbb{B}^{H}\|_{2\alpha}<\infty$. Consequently, condition~\eqref{c:temp} is valid and shows the temperedness from above
of both random variables.
\end{proof}

From now, we shall simply assume that 
$\textbf{W}=(W,\mathbb{W})$ is a  rough path cocycle such that the
random variables
\benn
R_1(W)=\|W\|_{\alpha}\quad \text{ and }\quad R_2(\mathbb{W})=\|\mathbb{W} \|_{2\alpha}  
\eenn
are tempered from above. This will be necessary in the existence proof 
of a local center manifold. One wants to ensure~\cite{LuSchmalfuss,LianLu,
CaraballoDuanLuSchmalfuss} that for initial conditions 
belonging to a ball with a sufficiently small tempered from below radius, 
the corresponding trajectories remain within such a ball (see the proof of 
Lemma~\ref{localcman} below). To this aim we emphasize.
\begin{lemma}
	The random variable $R(W)$ in~\eqref{k} is tempered from below.
\end{lemma}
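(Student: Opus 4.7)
The plan is to solve~\eqref{k} explicitly for $\widetilde R(W)$, express $1/R(W)$ as an elementary function of $\|W\|_\alpha$ and $\|\mathbb{W}\|_{2\alpha}$, and then invoke the standing temperedness assumption on those two random variables.

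First I would observe that by~\eqref{k} we have the closed form
\begin{equation*}
\widetilde R(W) \;=\; \frac{K}{C[|A|,F] \;+\; C[|A|,G]\,(1+\|W\|_\alpha)\,(\|W\|_\alpha+\|\mathbb{W}\|_{2\alpha})},
\end{equation*}
and hence, by~\eqref{r},
\begin{equation*}
\frac{1}{R(W)} \;=\; \max\!\left\{\frac{1}{\widetilde R(W)},\,1\right\} \;=\; \max\!\left\{\frac{C[|A|,F] + C[|A|,G](1+\|W\|_\alpha)(\|W\|_\alpha+\|\mathbb{W}\|_{2\alpha})}{K},\,1\right\}.
\end{equation*}
By Definition~\ref{def:tempered}, proving that $R(W)$ is tempered from below amounts to showing that $1/R(\Theta_t W)$ grows subexponentially as $t\to\pm\infty$ for every fixed $W\in\Omega_W$.

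Next I would use the elementary fact that the class of random variables that are tempered from above is closed under finite sums, products, positive constants, and under taking a maximum with a constant; this follows directly from $\ln^+(a+b)\leq \ln 2+\ln^+ a+\ln^+ b$ and $\ln^+(ab)\leq \ln^+ a + \ln^+ b$, combined with dividing through by $|t|$ and passing to the $\limsup$. By the standing assumption placed after Lemma~\ref{ltempered}, the random variables $W\mapsto \|W\|_\alpha$ and $\mathbb{W}\mapsto\|\mathbb{W}\|_{2\alpha}$ are tempered from above under $\Theta_t$. Applying the closure properties to the explicit formula above then yields
\begin{equation*}
\limsup_{t\to\pm\infty}\frac{\ln^+(1/R(\Theta_t W))}{t}=0,
\end{equation*}
which is precisely the statement that $R(W)$ is tempered from below.

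There is really no main obstacle here beyond bookkeeping; the only subtlety is to confirm that taking the maximum with the constant $1$ (the truncation in~\eqref{r}) preserves temperedness from above, which is immediate from $\ln^+\max\{a,1\}=\ln^+ a$. Thus the proof reduces to the explicit computation of $1/R$ followed by a routine application of the closure properties of tempered random variables.
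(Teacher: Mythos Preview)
Your proposal is correct and follows essentially the same route as the paper: both argue that $1/\widetilde R(W)$ is a polynomial expression in the tempered-from-above quantities $\|W\|_\alpha$ and $\|\mathbb{W}\|_{2\alpha}$, invoke closure of temperedness under sums and products, and conclude that $R(W)$ is tempered from below. You are in fact slightly more explicit than the paper in handling the truncation $R(W)=\min\{\widetilde R(W),1\}$ via $\ln^+\max\{a,1\}=\ln^+ a$, a step the paper leaves implicit.
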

\begin{proof}
	This follows by Lemma~\ref{ltempered} regarding the structure given in~\eqref{k}. The random variables $\|W\|_{\alpha}$ and $\|\mathbb{W}\|_{2\alpha}$ are tempered from above and therefore $(1+\|W\|_{\alpha})(\|W\|_{\alpha} + \|\mathbb{W}\|_{2\alpha})$ is again tempered from above. More generally, a polynomial containing $\|W\|_{\alpha}$ and $\|\mathbb{W}\|_{2\alpha}$ is tempered from above. According to Def~\ref{def:tempered} the inverse of a tempered from above random variable is tempered from below. In conclusion, a random variable having the structure $\frac{K}{C_1 + C_2 \widetilde{C}(W)}$, where $K,C_1,C_2$ are positive constants and the random variable $\widetilde{C}$ is tempered from above, is tempered from below.
\end{proof}

\section{Local center manifolds for RDEs}
\label{lpm}

In this section we prove the existence of a local center manifold for~\eqref{sde1}. 
The approach is similar to the one employed in~\cite{GarridoLuSchmalfuss} in order 
to compute unstable manifolds for SPDEs driven by a fractional Brownian motion with 
Hurst parameter $H>1/2$ using elements from fractional calculus~\cite{MaslowskiNualart}. 
However, here we want to connect the theory of random invariant manifolds for S(P)DEs 
as in~\cite{DuanLuSchmalfuss, LianLu,  GarridoLuSchmalfuss} to rough paths theory. This 
allows us to consider SDEs driven by general $\alpha$-H\"older continuous processes, 
as described in Section~\ref{rd}.

\begin{assumptions}
\label{ass:linearpart} We assume that we are in a center-stable situation, namely there 
are eigenvalues $\{\lambda^\txtc_j\}_{j=1}^{n_\txtc}$ of the linear operator $A$ on 
the imaginary axis $\txti\R$ as well as eigenvalues $\{\lambda^\txts_j\}_{j=1}^{n_\txts}$ 
in the left-half plane $\{z\in\C:\textnormal{Re}(z)<0\}$. Upon counting multiplicies we
have $n_\txtc+n_\txts=n$. Hence, there 
exists a decomposition of the phase space $\R^n=\cX=\cX^{\txtc}\oplus \cX^{\txts}$, where 
the linear spaces $\cX^{\txtc}$ and $\cX^{\txts}$ are spanned by the (generalized) 
eigenvectors with eigenvalues $\lambda^\txtc_j$ and $\lambda^\txts_j$ respectively. 
We denote the restrictions of $A$ on $\cX^{\txtc}$ and $\cX^{\txts}$ by $A_{\txtc}:=A|_{\cX^{\txtc}}$ and $A_{\txts}:=A|_{\cX^{\txts}}$. Then, $S^{\txtc}(t):=e^{tA_{\txtc}}$ and $S^{\txts}(t):=e^{tA_{\txts}}$ are groups of linear operators on $\cX^{\txtc}$ respectively $\cX^{\txts}$. Moreover, there exist two bounded projections $P^{\txtc}$ and $P^{\txts}$ associated 
to this splitting such that
	\begin{itemize} 
	\item [1)] $\Id= P^{\txts}+ P^{\txtc}$;
	\item [2)] $P^{\txtc}S(t)=S(t)P^{\txtc}$ and $P^{\txts}S(t)=S(t)P^{\txts}$ 
	for $t\geq 0$.
\end{itemize}
Additionally, we assume that there exist two exponents $\gamma$ and $\beta$ 
with $-\beta<0\leq \gamma<\beta$ and constants $M_{c},M_{s}\geq 1$, such that
	\begin{align}
	&|S^{\txtc}(t)  x| \leq M_{c} \txte^{\gamma t} |x|, 
	~~\mbox{  for } t\leq 0 \mbox{ and } x\in \cX;\label{gamma}\\
	& |S^{\txts}(t)x| \leq M_{s} \txte^{-\beta t} |x|, 
	~\mbox{for } t\geq 0 \mbox{ and } x\in \cX.\label{beta}
	\end{align}
\end{assumptions}

For further details and similar assumptions, see~\cite[Sec.~6.1.1]{DuanWang} 
and~\cite[Sec.~7.1.2]{SellYou}. According to our restrictions we have 
$\gamma\geq 0$ and $-\beta<0$ which gives us the spectral gap $\gamma+\beta>0$.
We also use the notation $\xi^{\txtc}:=P^{\txtc}\xi$ and refer to $\cX^{\txtc}$ 
and $\cX^{\txts}$ as center, respectively stable, subspace.\medskip

\begin{remark}
One can easily extend the techniques presented below if one additionally has an 
unstable subspace, namely if there exist eigenvalues of $A$ with 
real part greater than zero. In this case the classical exponential 
trichotomy condition is satisfied, see for instance~\cite{ChenRobertsDuan, SellYou} 
and Appendix~\ref{b}. For simplicity and from the point of view of applications 
we assume that we are in a center-stable situation and work with 
Assumptions~\ref{ass:linearpart} similar to~\cite[Sec.~6]{DuanWang}.
\end{remark}
 
\begin{definition}
\label{def:cm} We call a random set $\cM^{\txtc}({W})$, which is invariant 
with respect to $\varphi$ (i.e. $\varphi(t, {W},\cM^{\txtc}({W}))\subset 
\cM^{\txtc}(\Theta_{t}{W})$ for $t\in\mathbb{R}$ and ${W}\in\Omega_{W}$), a 
center manifold if this can be represented as 
\begin{align}\label{graph}
 \cM^{\txtc}({W})=\{\xi + h^{\txtc}(\xi,{W})\mbox{ : }\xi\in \cX^{\txtc} \},
\end{align}
where $h^{\txtc}(\cdot,{W}):\cX^{\txtc}\to \cX^{\txts}$ is Lipschitz continuous and differentiable in zero. Moreover, 
$h^{\txtc}(0,{W})=0$ and $\cM^{\txtc}({W})$ is tangent to $\cX^{\txtc}$ at 
the origin, meaning that the tangency condition $\txtD h^{\txtc}(0,{W})=0$ 
is satisfied.
\end{definition}

We prove the existence of a local center manifold 
$\cM^{\txtc}_{\textnormal{loc}}({W})$ for~\eqref{sde1}, namely~\eqref{graph} 
holds true when $\xi$ belongs to a random ball of $\cX^{\txtc}$ with a 
tempered radius. The Lipschitz continuity of $h^{\txtc}$ with respect to 
$\xi$ will also be justified.

\begin{remark} 
For a better comprehension Appendix~\ref{a} summarizes basic methods used to 
establish the invariance of random manifolds. In the theory of random dynamical 
systems the suitable concept for invariance of a random set 
(see~\cite{Arnold,DuanLuSchmalfuss}) is that each orbit starting inside this 
random set, evolves and remains there omega-wise modulo the changes that occur 
due to the noise. These changes can be characterized by a suitable shift of the 
fiber of the noise, as argued in the proof of Lemma~\ref{localcman}.
\end{remark}

One of the proof technique for the existence of (local) center manifolds for 
deterministic and stochastic ODEs/PDEs is based on the Lyapunov-Perron method. 
We employ here the Lyapunov-Perron method in conjunction with rough path estimates. 
Note that the continuous-time Lyapunov-Perron map for~\eqref{sde1} is constituted 
in this case by (compare~\cite[Sec.~6.2]{DuanWang} or~\cite{WanngDuan})
\begin{align}
\label{lp}
J({W}, U,\xi)[\tau] & := S^{\txtc}(\tau) \xi^{\txtc} + \int\limits_{0}^{\tau} 
S^{\txtc} (\tau-r)P^{\txtc }F(U_{r}) ~\txtd r + \int\limits_{0}^{\tau} S^{\txtc} 
(\tau-r)P^{\txtc } G(U_{r}) ~\txtd \textbf{W}_r\\
& +\int\limits_{-\infty}^{\tau} S^{\txts} (\tau-r)P^{\txts } F(U_{r}) ~\txtd r 
+ \int\limits_{-\infty}^{\tau} S^{\txts} (\tau-r) P^{\txts }G (U_{r}) 
~\txtd \textbf{W}_{r}, \nonumber
\end{align}
for $\tau\leq 0$. For more details on the Lyapunov-Perron transform in the context of center manifolds, see Section~\ref{sectfp} and the references specified therein.
Due to the presence of the rough stochastic integrals we cannot directly work 
with~\eqref{lp}, since we have to keep track of $\|W\|_{\alpha}$ and 
$\|\mathbb{W}\|_{2\alpha}$ occurring in~\eqref{gdiffusion} on a finite-time 
horizon. Similar to~\cite{GarridoLuSchmalfuss} we derive an appropriate 
discretized version of the Lyapunov-Perron map and show that this possesses 
a fixed-point in a suitable function space. We 
provide the explicit derivation of the discrete Lyapunov-Perron map 
in Subsection~\ref{dlp}. \\
For our aims we also have to considered the modified version of~\eqref{lp} and indicate this as follows, recalling the notations introduced in Section~\ref{cut}.
	\begin{align*}
	J_R({W}, U,\xi)[\tau] & := S^{\txtc}(\tau) \xi^{\txtc} + \int\limits_{0}^{\tau} 
	S^{\txtc} (\tau-r)P^{\txtc }\overline{F}(\chi_R(U))(r) ~\txtd r + \int\limits_{0}^{\tau} S^{\txtc} 
	(\tau-r)P^{\txtc }\overline{G}(\chi_{R}(U))(r) ~\txtd \textbf{W}_r\\
	& +\int\limits_{-\infty}^{\tau} S^{\txts} (\tau-r) P^{\txts }\overline{F}(\chi_R(U))(r) ~\txtd r 
	+ \int\limits_{-\infty}^{\tau} S^{\txts} (\tau-r)P^{\txts } \overline{G}(\chi_R(U))(r) 
	~\txtd \textbf{W}_{r}\\
	&=  S^{\txtc}(\tau) \xi^{\txtc} + \int\limits_{0}^{\tau} 
	S^{\txtc} (\tau-r)P^{\txtc }F_R(U)(r) ~\txtd r + \int\limits_{0}^{\tau} S^{\txtc} 
	(\tau-r)P^{\txtc }G_R(U)(r) ~\txtd \textbf{W}_r\\
	& +\int\limits_{-\infty}^{\tau} S^{\txts} (\tau-r)P^{\txts } F_R(U)(r) ~\txtd r 
	+ \int\limits_{-\infty}^{\tau} S^{\txts} (\tau-r)P^{\txts } G_R(U)(r) 
	~\txtd \textbf{W}_{r}.
	\end{align*}
In the following sequel we will consider the solution of~\eqref{sde1} at 
discrete times and obtain a sequence of mild solutions. The local center 
manifold theory will be developed for the discrete-time random dynamical 
system and will be shown to hold true for the original continuous-time one, as 
in~\cite{GarridoLuSchmalfuss, LianLu}. 

\subsection{Derivation of a discrete Lyapunov-Perron transform}
\label{dlp}
 
The strategy is to rewrite~\eqref{lp} such that we only have to deal with 
stochastic integrals on the time-interval $[0,1]$, as considered in 
Section~\ref{rde}. To this aim we let $W\in\Omega_{W}$, $t\in[0,1]$ and 
$i\in\mathbb{Z}^{-}$. Replacing $\tau$ by $t+i-1$ in~\eqref{lp} we obtain
\begin{align}
&J({W}, U,\xi)[t+i-1]=S^{\txtc}(t+i-1) \xi^{\txtc} \nonumber \\
& + \int\limits_{0}^{t+i-1} S^{\txtc}(t+i-1-r) P^{\txtc }F(U_{r}) ~\txtd r 
+ \int\limits_{0}^{t+i-1} S^{\txtc}(t+i-1-r)P^{\txtc } G(U_{r}) 
~\txtd \bm{{W}}_{r}\label{one}\\
&+ \int\limits_{-\infty}^{t+i-1} S^{\txts} (t+i-1-r)P^{\txts } F(U_{r}) ~\txtd r 
+\int\limits_{-\infty}^{t+i-1} S^{\txts} (t+i -1-r) P^{\txts }G(U_{r}) 
~\txtd \bm{{W}}_{r} \label{two}\\
& = S^{\txtc}(t+i-1)\xi^{\txtc} + \int\limits_{0}^{i} S^{\txtc} (t+i-1-r) P^{\txtc }
F(U_{r}) ~\txtd r + \int\limits_{0}^{i} S^{\txtc} (t+i-1-r) P^{\txtc }
G(U_{r})~\txtd \bm{{W}}_{r} \nonumber\\
&+ \int\limits_{i}^{i-1+t} S^{\txtc} (t+i-1-r) P^{\txtc }F(U_{r}) ~\txtd r 
+ \int\limits_{i}^{i-1+t} S^{\txtc}(t+i-1-r) P^{\txtc }G(U_{r})~\txtd \bm{{W}}_{r} 
\nonumber\\
& + \int\limits_{-\infty}^{i-1} S^{\txts} (t+i-1-r)P^{\txts } F(U_{r})~\txtd r 
+ \int\limits_{-\infty}^{i-1} S^{\txts}(t+i-1-r)P^{\txts } G(U_{r}) 
~\txtd \bm{{W}}_{r}\nonumber\\
& + \int\limits_{i-1}^{t+i-1} S^{\txts} (t+i-1-r)P^{\txts } F(U_{r})~\txtd r 
+ \int\limits_{i-1}^{t+i-1} S^{\txts} (t+i-1-r) P^{\txts }G(U_{r}) 
~\txtd \bm{{W}}_{r}\nonumber\\
& = S^{\txtc}(t+i-1)\xi^{\txtc} + \sum\limits_{k=0}^{i+1} 
\int\limits_{k}^{k-1}S^{\txtc} (t+i-1-r) P^{\txtc }F(U_{r})~\txtd r 
+ \sum\limits_{k=0}^{i+1} \int\limits_{k}^{k-1}S^{\txtc} 
(t+i-1-r)P^{\txtc } F(U_{r})~\txtd \bm{{W}}_{r} \label{subst1} \\
& + \int\limits_{i}^{i-1+t} S^{\txtc} (t+i-1-r)P^{\txtc } F(U_{r}) 
~\txtd r + \int\limits_{i}^{i-1+t}S^{\txtc} (t+i-1-r) P^{\txtc }
G(U_{r})~\txtd \bm{{W}}_{r} \label{subst2} \\
& + \sum\limits_{k=-\infty}^{i-1} \int\limits_{k-1}^{k} 
S^{\txts} (t+i-1-r)P^{\txts } F(U_{r}) ~\txtd r + \sum\limits_{k=-\infty}^{i-1} 
\int\limits_{k-1}^{k} S^{\txts} (t+i-1-r)P^{\txts } G(U_{r}) ~\txtd \bm{{W}}_{r} 
\label{subst3}\\
&  + \int\limits_{i-1}^{t+i-1} S^{\txts} (t+i-1-r)P^{\txts } F(U_{r})~\txtd r 
+ \int\limits_{i-1}^{t+i-1} S^{\txts} (t+i-1-r) P^{\txts }G(U_{r}) 
~\txtd \bm{{W}}_{r}\label{subst4}.
\end{align}
In order to simplify the expressions above we perform the following substitutions 
and use~\eqref{shiftintegral}. More precisely, replacing $r$ by $r-k+1$, the sum 
in~\eqref{subst1} yields
\begin{align*}
&-\sum\limits_{k=0}^{i+1} S^{\txtc}(t+i-1-k) \int\limits_{0}^{1} 
S^{\txtc}(1-r)P^{\txtc } F(U_{r+k-1})~\txtd r \\
& - \sum\limits_{k=0}^{i+1} S^{\txtc} (t+i-1-k) P^{\txtc } \int\limits_{0}^{1} 
S^{\txtc}(1-r) P^{\txtc}G(U_{r+k-1})~\txtd\Theta_{k-1} \bm{{W}}_{r}.
\end{align*} 
Substituting $r$ with $r-i+1$, we may re-write~\eqref{subst2} as
\begin{align*}
 -\int\limits_{t}^{1} S^{\txts} (t-r)P^{\txts } F(U_{r+i-1}) ~\txtd r 
- \int\limits_{t}^{1} S^{\txts}(t-r) P^{\txts }G(U_{r+i-1})~\txtd\Theta_{i-1} 
\bm{{W}}_{r}.
\end{align*}
Using again the substitution $r\to r-k+1$, we  re-formulate~\eqref{subst3} as
\begin{align*}
&\sum\limits_{k=-\infty}^{i-1} S^{\txts}(t+i-1-k) \int\limits_{0}^{1} 
S^{\txts}(1-r)P^{\txts } F(U_{r+k-1}) ~\txtd r \\
&+ \sum\limits_{k=-\infty}^{i-1} S^{\txts}(t+i-1-k) \int\limits_{0}^{1} 
S^{\txts}(1-r)P^{\txts } G (U_{r+k-1})~\txtd \Theta_{k-1} \bm{{W}}_{r}.
\end{align*}
Finally, replacing $r$ by $ r - i +1$ in~\eqref{subst4} entails  
\begin{align*}
  \int\limits_{0}^{t} S^{\txts}(t-r)P^{\txts } F(U_{r+i-1})~\txtd r 
	+ \int\limits_{0}^{t} S^{\txts}(t-r) P^{\txts }G(U_{r+i-1})~\txtd 
	\Theta_{i-1} \bm{{W}}_{r}.
 \end{align*}
Summarizing, we have for ${W}\in\Omega_{W}$, $t\in[0,1]$ and $i\in\mathbb{Z}_{-}$ 
that
\begin{align}
&  J({W},U,\xi)[t+i-1]=S^{\txtc}(t+i-1) \xi^{\txtc} \nonumber \\
& - \sum\limits_{k=0}^{i+1} S^{\txtc} (t+i-1-k)  \left(  
\int\limits_{0}^{1} S^{\txtc}(1-r) P^{\txtc }F(U_{r+i-1}) ~\txtd r 
+ \int\limits_{0}^{1} S^{\txtc}(1-r)P^{\txtc } G(U_{r}) ~\txtd 
\Theta_{i-1}\bm{{W}}_{r} \right ) \label{t1}\\
& -\int\limits_{t}^{1} S^{\txtc} (1-r) P^{\txtc }F (U_{r+i-1}) ~\txtd r 
-  \int\limits_{t}^{1} S^{\txtc} (1-r) P^{\txtc }G(U_{r+i-1}) 
~\txtd \Theta_{i-1}\bm{{W}}_{r}\label{hatt}\\
& +\sum\limits_{k=-\infty}^{i-1}  S^{\txts} (t+i-1-k)
\left( \int\limits_{0}^{1} S^{\txts}(1-r) P^{\txts }F(U_{r+k-1}) ~\txtd r 
+  \int\limits_{0}^{1} S^{\txts}(1-r) P^{\txts }G(U_{r+k-1})
 ~\txtd \Theta_{k-1}\bm{{W}}_{r} \right) \label{t2}\\
& +\int\limits_{0}^{t}S^{\txts}(t-r) P^{\txts }F(U_{r+i-1}) ~\txtd r 
+ \int\limits_{0}^{t} S^{\txts}(t-r)P^{\txts }G(U_{r+i-1}) 
~\txtd \Theta_{i-1}\bm{{W}}_{r} \label{t3},
\end{align}
This will lead us to the structure of the \emph{discrete} Lyapunov-Perron map, 
as defined below~\eqref{j}. To simplify the notation, motivated 
by~\eqref{t1},~\eqref{t2} and~\eqref{t3}, 
we write for $(Y,Y')\in D^{2\alpha}_{{W}}([0,1];\cX)$
\begin{align}\label{tt}
T^{\txts/\txtc}({W},Y, Y')[\cdot]:&= \int\limits_{0}^{\cdot} S^{\txts/\txtc} 
(\cdot-r)P^{\txts/\txtc }\overline{F}(Y)(r) ~\txtd r + \int\limits_{0}^{\cdot} S^{\txts/\txtc}(\cdot-r)P^{\txts /\txtc }\overline{G}(Y)(r) ~\txtd  
\bm{{W}}_{r} \\
&=\int\limits_{0}^{\cdot} S^{\txts/\txtc} 
(\cdot-r)P^{\txts/\txtc  } F(Y_r) ~\txtd r + \int\limits_{0}^{\cdot} S^{\txts/\txtc}(\cdot-r)P^{\txts/\txtc }G(Y_{r}) ~\txtd  
\bm{{W}}_{r} ,
\end{align}
and 
\begin{align}
\label{hattt}
\hat{T}^{\txtc}({W}, Y, Y')[\cdot] :&=  \int\limits_{\cdot}^{1} 
S^{\txtc}(\cdot- r ) P^{\txtc }\overline{F}(Y)(r) ~\txtd r + \int\limits_{\cdot} ^{1} 
S^{\txtc} (\cdot -r ) P^{\txts }\overline{G}(Y)(r) ~\txtd \bm{{W}}_{r}\\
&= \int\limits_{\cdot}^{1} 
S^{\txtc}(\cdot- r ) P^{\txtc }F(Y_{r}) ~\txtd r + \int\limits_{\cdot} ^{1} 
S^{\txtc} (\cdot -r )P^{\txtc } G(Y_{r}) ~\txtd \bm{{W}}_{r}.
\end{align}
Of course, a Gubinelli derivative of $T^{\txts/\txtc}({W},Y, Y')$ respectively $\hat{T}^{\txtc}({W}, Y, Y')$ is given by $G(Y)$.
\begin{remark}
When we  work with $F_{R}$ and $G_{R}$ instead of 
$\overline{F}$ and $\overline{G}$, we indicate this fact using the notation $T^{\txts/\txtc}_{R}$, 
respectively $\hat{T}^{\txtc}_{R}$, see also~\eqref{tr}. 
\end{remark}

The main goal now is to find an appropriate framework, in which we can 
formulate a meaningful fixed-point problem for $J$. 

\subsection{The fixed-point argument}
\label{sectfp}

Before we proceed with the existence proof of local center 
manifolds for~\eqref{sde1} we point out the main differences 
between our approach and a known approach for random center manifold 
theory for SDEs driven by linear multiplicative Stratonovich noise, e.g.
\begin{equation}
\label{eqstrat}
\begin{cases}
\txtd u = (A u + f (u)) ~\txtd t + u \circ ~\txtd \tilde{B}_{t}\\
u(0)=x.
\end{cases}
\end{equation}
Here $\tilde{B}$ stands for a two-sided real-valued Brownian motion.
In this case, using the transformation $\widetilde{u}:=u \txte^{-z(\tilde{B})}$, 
where $(t,\tilde{B})\mapsto z(\theta_{t}{\tilde{B}})$ is the Ornstein-Uhlenbeck 
process (recall~\eqref{ouoned}), one obtains the non-autonomous random 
differential equation
\begin{equation}\label{ou}
\txtd u =( A u + z(\theta_{t}{\tilde{B}}) u + g (\theta_{t}{\tilde{B}},u) )~\txtd t,
\end{equation}
where $g(\tilde{B},u):=\txte^{-z(\tilde{B})}f(\txte^{z(\tilde{B})}u)$.
Regarding Assumptions \ref{ass:linearpart} one immediately infers that 
the continuous-time Lyapunov-Perron transform for~\eqref{ou} is given for $t\leq 0$ by
\begin{align}
\label{lpeinfach}
J({\tilde{B}},u,x)[t] & := S^{\txtc}(t) \txte^{\int\limits_{0}^{t} 
z(\theta_{\tau}{\tilde{B}}) ~\txtd \tau} P^{\txtc}x   
+ \int\limits_{0}^{t} S^{\txtc}(t-r)  \txte^{\int\limits_{r}^{t} 
z(\theta_{\tau}{\tilde{B}}) ~\txtd \tau} P^{\txtc} 
g(\theta_{r}{\tilde{B}}, u(r))~\txtd r\nonumber\\
& +\int\limits_{-\infty}^{t} S^{\txts} (t-r ) \txte^{\int\limits_{r}^{t} 
z(\theta_{\tau}{\tilde{B}}) ~\txtd \tau} P^{\txts} g(\theta_{r}{\tilde{B}}, 
u(r))~\txtd r.
\end{align}
Further details on the derivation/setting of this operator can be found 
in~\cite{WanngDuan},~\cite[Sec.~6.2.2]{DuanWang},~\cite[Ch.4]{ChekrounLiuWang} 
and the references specified therein. The next natural step is to show 
that~\eqref{lpeinfach} possesses a fixed-point in a certain function space. 
One possible choice turns out to be $BC^{\eta,z}(\mathbb{R}^{-};\cX)$, 
see~\cite[p.~156]{DuanWang}. This space is defined as
\begin{align}
\label{eq:spaceDW}
BC^{\eta,z}(\mathbb{R}^{-};\cX):=\left\{u:\mathbb{R}^{-}\to \cX, 
~u ~\mbox{is continuous and } \sup\limits_{t\leq 0}\txte^{-\eta t 
-\int\limits_{0}^{t}z(\theta_{\tau}{\tilde{B}})~\txtd \tau } |u(t)|<\infty\right\}
\end{align}
and is endowed with the norm
\begin{align*}
||u||_{BC^{\eta,z}} := \sup\limits_{t\leq 0}~\txte^{-\eta t 
-\int\limits_{0}^{t}z(\theta_{\tau}{\tilde{B}})~\txtd \tau }|u(t)|.
\end{align*}
Here $\eta$ is determined from~\eqref{gamma} and~\eqref{beta}, namely one 
has $-\beta<\eta<0$. Note that the previous expressions are well-defined  
since 
\benn
\lim\limits_{t\to \pm\infty}\frac{|z(\theta_t{\tilde{B}})|}{|t|}=0, 
\eenn
according to~\cite[Lem.~2.1]{DuanLuSchmalfuss} and the references specified 
therein. Consequently, in this case the Lyapunov-Perron map~\eqref{lpeinfach} works
with an implicitly transformed equation and not directly with the original problem.\medskip

In our context, we directly work with solution of the RDE~\eqref{sde1}. However, since our 
Lyapunov-Perron transform~\eqref{lp} contains stochastic integrals, the entire 
machinery applicable to~\eqref{lpeinfach} breaks down. Therefore, we have to 
find an appropriate setting for the fixed-point argument. To this aim we introduce 
now a function space which helps us incorporate the discretized version 
of~\eqref{lp} derived in the previous subsection. Namely, we work with the 
space of sequences $BC^{\eta}(D^{2\alpha}_{{W}})$, for $-\beta<\eta<0$, whose elements are constituted 
by $\cX$-valued controlled rough paths on $[0,1]$. 

\begin{definition}
We say that a sequence of controlled rough paths $\mathbb{U}:=((U^{i-1}, 
(U^{i-1})'))_{i\in\mathbb{Z}^{-}}$ with $U^{i-1}_{0}=U^{i-2}_{1}$ belongs to the space 
$BC^{\eta}(D^{2\alpha}_{{W}}) $ if
\begin{equation}\label{bcnorm}
\|\mathbb{U}\|_{BC^{\eta}\left(D^{2\alpha}_{{W}}\right)}:=
	\sup\limits_{i\in\mathbb{Z}^{-}} \txte^{-\eta (i-1)} \|U^{i-1},
	(U^{i-1})'\|_{\textbf{D}^{2\alpha}_{{W}}([0,1];\cX)}<\infty.
\end{equation} 
\end{definition}

For the computation of the local center manifolds we firstly 
modify~\eqref{sde1} with the cut-off function given in Section~\ref{cut}
i.e., we replace $\overline{F}$ and $\overline{G}$ by $F_{R}$ respectively $G_{R}$. 

Motivated by Subsection~\ref{dlp} we are justified to introduce the 
discrete Lyapunov-Perron transform $J_{d}({W}, \mathbb{U},\xi)$ for a 
sequence of controlled rough paths $\mathbb{U}\in BC^{\eta}(D^{2\alpha}_{{W}}) $ 
and $\xi\in \cX$ as the pair $J_{d}({W},\mathbb{U},\xi):=(J^{1}_{d}({W},
\mathbb{U},\xi), J^{2}_{d}({W},\mathbb{U},\xi))$, where the precise structure 
is given below. The dependence of $J_d$ on the cut-off parameter $R$ is indicated by the subscript $R$. For $t\in[0,1], ~{W}\in\Omega_{W} \mbox{ and } i\in\mathbb{Z}^{-}$ 
we define

\begin{align}\label{j}
&J^{1}_{R,d}({W}, \mathbb{U},\xi)[i-1,t] : = S^{\txtc}(t+i-1) 
\xi^{\txtc} \\
&  -\sum\limits_{k=0}^{i+1} S^{\txtc} (t+i-1-k) 
\left(\int\limits_{0}^{1} S^{\txtc} (1-r)P^{\txtc } F_{R}(U^{k-1})(r) ~\txtd r
 + \int\limits_{0}^{1} S^{\txtc}(1-r)P^{\txtc }G_{R}(U^{k-1})(r) ~\txtd \Theta_{k-1}
 \bm{{W}}_{r}  \right)\nonumber\\
& - \int\limits_{t}^{1} S^{\txtc} (t-r)P^{\txtc }F_{R}(U^{i-1})(r) ~\txtd r 
- \int\limits_{t}^{1} S^{\txtc} (t-r) P^{\txtc }G_{R}(U^{i-1})(r) ~\txtd 
\Theta_{i-1} \bm{{W}}_{r}\nonumber\\
& + \sum\limits_{k=-\infty}^{i-1} S^{\txts} (t+i-1-k) \left(
\int\limits_{0}^{1} S^{\txts} (1-r) P^{\txts }F_{R}(U^{k-1})(r) ~\txtd r 
+ \int\limits_{0}^{1} S^{\txts}(1-r)P^{\txts }G_{R}(U^{k-1})(r) ~\txtd \Theta_{k-1} 
\bm{{W}}_{r}  \right)\nonumber\\
& + \int\limits_{0}^{t} S^{\txts} (t-r)P^{\txts }F_{R}(U^{i-1})(r) ~\txtd r
 + \int\limits_{0}^{t} S^{\txts} (t-r) P^{\txts }G_{R}(U^{i-1})(r) ~\txtd 
\Theta_{i-1} \bm{{W}}_{r}. ~~ \nonumber
\end{align}
Furthermore, $J^{2}_{R,d}({W},\mathbb{U},\xi)$ stands for the Gubinelli 
derivative of $J^{1}_{R,d}({W},\mathbb{U},\xi)$, i.e.~$J^{2}_{R,d}({W},
\mathbb{U},\xi)[i-1,\cdot]:=(J^{1}_{R,d}({W},\mathbb{U},\xi)[i-1,\cdot] )'$. 
Note that $\xi^{\txtc}$ can be recovered by applying the projection to $J^1_{R,d}$ on the center subspace and setting $i=0$ and $t=1$, i.e., $P^{\txtc}J^{1}_{R,d}({W},
\mathbb{U},\xi)[-1,1]=\xi^{\txtc}$.\medskip

We emphasize that for a sequence $\mathbb{U}\in BC^{\eta}(D^{2\alpha}_{{W}})$ 
the first index $i\in\mathbb{Z}_{-}$ in the definition of $J_{R,d}({W},
\mathbb{U},\xi)[\cdot,\cdot]$ gives the position within the sequence and 
the second one refers to the time variable $t\in[0,1]$.\\

We are going to show that~\eqref{j} maps $BC^{\eta}(D^{2\alpha}_{{W}})$ 
into itself and is a contraction if the constant $K$ specified in~\eqref{k} is 
chosen small enough, as justified by the following computation. We let $C_{S}$ stand for a 
constant which exclusively depends on the group $S$ and state our first main result.

\begin{theorem}
\label{contraction} Let Assumptions~\ref{ass:linearpart},~\nameref{f} and~\nameref{gi} 
hold true and let $K$ satisfy the gap condition
\begin{align}\label{gap:k}
	K  \left( \frac{ \txte^{\beta+\eta}(C_{S}M_{s}
	\txte^{-\eta}+1)}{1-\txte^{-(\beta+\eta)}} + 
	\frac{\txte^{\gamma-\eta} (C_{S}M_{c}\txte^{-\eta}+1)}{1-
	\txte^{-(\gamma-\eta)}} \right) <\frac{1}{4}.
\end{align}
Then, the map $J_{d}:\Omega\times BC^{\eta}(D^{2\alpha}_{{W}})\to 
BC^{\eta}(D^{2\alpha}_{{W}}) $ possesses a unique fixed-point 
$\Gamma\in BC^{\eta}(D^{2\alpha}_{{W}})$.
\end{theorem}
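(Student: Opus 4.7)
The plan is to invoke Banach's fixed-point theorem for the map $\mathbb{U}\mapsto J_{R,d}(W,\mathbb{U},\xi)$ on the complete space $BC^\eta(D^{2\alpha}_{W})$, at fixed $\xi\in\cX$. Two things need to be checked: that the image again lies in $BC^\eta(D^{2\alpha}_{W})$, and that the map is a strict contraction under the gap hypothesis~\eqref{gap:k}.

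For the first point, for each $i\in\mathbb{Z}^-$ the block $t\mapsto J^1_{R,d}(W,\mathbb{U},\xi)[i-1,t]$ must be recognised as a controlled rough path with respect to the shifted driver $\Theta_{i-1}W$. The ``outer'' contributions — the linear term $S^{\txtc}(t+i-1)\xi^{\txtc}$ together with the two tail sums over $k$ multiplying $S^{\txts/\txtc}(t+i-1-k)$ by fixed vectors of $\cX^{\txts/\txtc}$ — are smooth in $t$ and hence carry zero Gubinelli derivative (Lemma~\ref{derivativesemigroup}), and their $\textbf{D}^{2\alpha}$-norms are controlled by exponential factors in $|t+i-1-k|$ via the same lemma and the projection bounds~\eqref{gamma}--\eqref{beta}. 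The boundary integrals $\int_t^1 S^{\txtc}(t-r)P^{\txtc}\cdots\,\txtd\Theta_{i-1}\textbf{W}_r$ and $\int_0^t S^{\txts}(t-r)P^{\txts}\cdots\,\txtd\Theta_{i-1}\textbf{W}_r$ are controlled rough paths by Lemma~\ref{integralgub}, with Gubinelli derivatives combining to $J^2_{R,d}[i-1,t]=(P^{\txts}-P^{\txtc})G_R(U^{i-1})_t$. The matching $U^{i-1}_0=U^{i-2}_1$ is preserved since the discretisation in Subsection~\ref{dlp} was derived precisely so as to telescope.

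For the contraction, take $\mathbb{U},\widetilde{\mathbb{U}}$ sharing $\xi$, so the linear term cancels. On each block $[0,1]$ the estimate~\eqref{wanttohave} applied to the driver $\Theta_{k-1}W$ produces the bound
\benn
\|T_R^{\txtc/\txts}(\Theta_{k-1}W,U^{k-1},(U^{k-1})')-T_R^{\txtc/\txts}(\Theta_{k-1}W,\widetilde U^{k-1},(\widetilde U^{k-1})')\|_{\textbf{D}^{2\alpha}_{\Theta_{k-1}W}}\leq K\,e^{\eta(k-1)}\|\mathbb{U}-\widetilde{\mathbb{U}}\|_{BC^\eta(D^{2\alpha}_{W})}
\eenn
by definition of the weighted norm~\eqref{bcnorm}. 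Dominating the endpoint values entering the tail sums by the full controlled-rough-path norms, multiplying by the outer semigroup decay via~\eqref{beta} respectively~\eqref{gamma}, and weighting by $e^{-\eta(i-1)}$, the stable sum (after the change of variable $\ell=i-1-k\geq 0$) reduces to $\sum_{\ell\geq 0}e^{-(\beta+\eta)\ell}=(1-e^{-(\beta+\eta)})^{-1}$, while the center sum (after $\ell=k-i\geq 1$) reduces to $\sum_{\ell\geq 1}e^{-(\gamma-\eta)\ell}\leq(1-e^{-(\gamma-\eta)})^{-1}$; both series converge precisely because $-\beta<\eta<\gamma$. The two boundary integrals $\int_t^1$ and $\int_0^t$ supply the additive $+1$ in each bracket of~\eqref{gap:k}, while $C_S$ collects the cost of passing from the pointwise bounds on $S^{\txts/\txtc}(\cdot)$ to the $\textbf{D}^{2\alpha}$-norm via Lemma~\ref{derivativesemigroup}. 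Collecting the factors yields exactly the left-hand side of~\eqref{gap:k} as Lipschitz constant, which by assumption is $<1/4$, so Banach's theorem furnishes the unique $\Gamma\in BC^\eta(D^{2\alpha}_{W})$.

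The main obstacle is the bookkeeping in the contraction step: one has to verify that the infinite stable tail sum converges in the full $\textbf{D}^{2\alpha}_{\Theta_{i-1}W}$-norm and not merely pointwise, which requires Lemma~\ref{derivativesemigroup} to transfer the time regularity across the outer factors $S^{\txts}(t+i-1-k)$ (and likewise for the finite center sum), and one has to confirm that~\eqref{wanttohave} — formulated for an absolute rough-path norm on $[0,1]$ — can be applied to differences on each block so that the uniform contraction constant $K$ can genuinely be factored out of every $k$-indexed summand and interacts cleanly with the shift $\Theta_{k-1}W\mapsto\Theta_{i-1}W$. Once these two points are settled, the geometric convergence is routine.
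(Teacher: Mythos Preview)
Your proposal is correct and follows essentially the same route as the paper: Banach's fixed-point theorem on $BC^\eta(D^{2\alpha}_W)$, block-wise application of~\eqref{wanttohave} to the shifted drivers $\Theta_{k-1}W$, multiplication by the outer semigroup decay from~\eqref{gamma}--\eqref{beta}, and geometric summation to produce exactly the bracketed factor in~\eqref{gap:k}. One small slip: the Gubinelli derivative of $J^1_{R,d}[i-1,\cdot]$ is $G_R(U^{i-1})$, not $(P^{\txts}-P^{\txtc})G_R(U^{i-1})$ --- the minus sign in front of $\int_t^1$ together with the backward orientation of that integral contributes $+P^{\txtc}G_R(U^{i-1})$, and $P^{\txts}+P^{\txtc}=\mathrm{Id}$ --- but this does not affect your contraction estimate.
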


\begin{remark}
Note that~\eqref{gap:k} can be obtained for instance by choosing 
the constant appearing in~\eqref{k} as 
\begin{align}\label{gk}
		K^{-1}:= 4 \txte ^{(\beta+\gamma)/2}\left(  
		\frac{\txte^{(\beta-\gamma)/2} C_{S}(M_{s}+M_{c}) 
		+1} {1-\txte^{-(\beta+\gamma)/2}} \right),
\end{align}
which follows by setting $\eta:=\frac{-\beta+\gamma}{2}<0$.
\end{remark}
	
\begin{proof}
Let two sequences $\mathbb{U}=((U^{i-1}, (U^{i-1})'))_{i\in\mathbb{Z}^{-}}$ 
and $\mathbb{\widetilde{U}}=((\widetilde{U}^{i-1}, 
(\widetilde{U}^{i-1})'))_{i\in\mathbb{Z}^{-}}$ that belong to 
$BC^{\eta}(D^{2\alpha}_{{W}}) $ and satisfy $P^{\txtc}U^{-1}_{1}=
P^{\txtc}\widetilde{U}^{-1}_{1}=\xi^{\txtc}$. We verify the contraction 
property. The fact that $J_{d}(\cdot)$ maps $BC^{\eta}(D^{2\alpha}_{{W}})$ 
into itself can be derived by setting $\widetilde{\mathbb{U}}=0$ in the 
next computation and using that $F_{R}(0)=G_{R}(0)=0$.
According to~\eqref{2alpha:abed} we have 
\begin{align*}
\|S^{\txtc} (t+i-1)\xi^{\txtc},0 \|_{BC^{\eta}(D^{2\alpha}_{{W}})}& 
= ||| S^{\txtc} (\cdot+i-1)\xi^{\txtc}|||_{2\alpha} \txte^{-\eta (i-1)}\\
&=|S^{\txtc}(i-1) \xi^{\txtc}|~ |||S(\cdot)|||_{2\alpha} \txte^{-\eta(i-1)} \\
&\leq C_{S}M_{c} \txte^{(\gamma-\eta)(i-1)}|\xi^{\txtc}|.
\end{align*}
The previous expression remains bounded for $i\in\mathbb{Z}^{-}$ since 
we assumed that $-\beta<\eta<0\leq\gamma<\beta$. Next, we are going to estimate 
the difference
\begin{align*}
 	||J_{d}({W},\mathbb{U},\xi) - J_{d}({W},\widetilde{\mathbb{U}},\xi)
||_{BC^{\eta}(D^{2\alpha}_{{W}})}
 	\end{align*}
in several intermediate steps. Verifying the contraction property on the 
stable part of~\eqref{j}, one has to compute two terms. First of all, 
due to~\eqref{wanttohave}
\begin{align*}
& \sum\limits_{k=-\infty}^{i-1} \txte^{-\eta(i-1)}
|||S^{\txts}(\cdot+i-1-k)|||_{2\alpha} \|T^{\txts}_{R}(\Theta_{k-1}{W}, 
U^{k-1}, (U^{k-1})') [1]- T^{\txts}_{R}(\Theta_{k-1}{W}, 
\widetilde{U}^{k-1}, (\widetilde{U}^{k-1})')[1] \|_{\textbf{D}^{2\alpha}_{{W}}}\\
& \leq \sum\limits_{k=-\infty}^{i-1}C_{S} M_{s}\txte^{-\eta(i-1)} 
\txte^{-\beta(i-1-k)}  K \|U^{k-1}-\widetilde{U}^{k-1}, 
(U^{k-1}-\widetilde{U}^{k-1})'  \|_{\textbf{D}^{2\alpha}_{{W}}}\\
& \leq \sum\limits_{k=-\infty}^{i-1} C_{S}M_{s} \txte^{-\eta(i-1)} 
\txte^{-\beta(i-1-k)} \txte^{\eta(k-1)} K \txte^{-\eta(k-1)} 
\| U^{k-1}-\widetilde{U}^{k-1},(U^{k-1} -\widetilde{U}^{k-1})'
\|_{\textbf{D}^{2\alpha}_{{W}}}\\
& \leq \sum\limits_{k=-\infty}^{i-1}   \txte^{-(\eta+\beta) 
(i-1-k) }C_{S} M_{s} \txte^{-\eta} K \txte^{-\eta(k-1)} \| 
U^{k-1}-\widetilde{U}^{k-1},(U^{k-1} 
-\widetilde{U}^{k-1})'\|_{\textbf{D}^{2\alpha}_{{W}}}.
\end{align*}
Combining this with the last term of (\ref{j})  entails the final 
estimate on the stable part
 \begin{align*}
 & \sum\limits_{k=-\infty}^{i-1} \txte^{-\eta(i-1)}|||
S^{\txts}(\cdot+i-1-k)|||_{2\alpha} \|T^{\txts}_{R}(\Theta_{k-1}{W}, 
U^{k-1}, (U^{k-1})') [1]- T^{\txts}_{R}(\Theta_{k-1}{W}, 
\widetilde{U}^{k-1}, (\widetilde{U}^{k-1})')[1] \|_{\textbf{D}^{2\alpha}_{{W}}}\\
 & + \txte^{-\eta(i-1)} \|T^{\txts}_{R}(\Theta_{i-1}{W}, U^{i-1}, 
(U^{i-1})') [\cdot]- T^{\txts}_{R}(\Theta_{i-1}{W}, \widetilde{U}^{i-1}, 
(\widetilde{U}^{i-1})')[\cdot] \|_{\textbf{D}^{2\alpha}_{{W}}}\\
 & \leq \sum\limits_{k=-\infty}^{i} \txte^{-(\eta+\beta)(i-k-1)} 
K (C_{S}M_{s}\txte^{-\eta} + 1)  e ^{-\eta(k-1)}  \| U^{k-1}-
\widetilde{U}^{k-1},(U^{k-1} -\widetilde{U}^{k-1})'\|_{\textbf{D}^{2\alpha}_{{W}}}\\
 & \leq 	K   \frac{ \txte^{\beta+\eta}(C_{S}M_{s}\txte^{-\eta}+1)}{1
-\txte^{-(\beta+\eta)}}  \|\mathbb{U}-\widetilde{\mathbb{U}}, \mathbb{U}'-\widetilde{\mathbb{U}}'
\|_{BC^{\eta}(D^{2\alpha}_{{W}})}.
 \end{align*}
We focus now on the center part. Here we obtain
\begin{align*}
& \sum\limits_{k=0}^{i+1} \txte^{-\eta(i-1)} |||S^{\txtc} 
(\cdot + i -1-k ) |||_{2\alpha} \|T^{\txtc}_{R}(\Theta_{k-1}{W}, 
U^{k-1}, (U^{k-1})') [1]- T^{\txtc}_{R}(\Theta_{k-1}{W}, 
\widetilde{U}^{k-1}, (\widetilde{U}^{k-1})')[1] \|_{\textbf{D}^{2\alpha}_{{W}}}\\
 & \leq \sum\limits_{k=0}^{i+1}C_{S} M_{c} e ^{-\eta (i-1)} 
\txte^{\gamma(i-1-k)} K \| U^{k-1}-\widetilde{U}^{k-1},(U^{k-1} 
-\widetilde{U}^{k-1})'\|_{\textbf{D}^{2\alpha}_{{W}}}\\
 & \leq \sum\limits_{k=0}^{i+1} C_{S}M_{c} \txte^{-\eta(i-1)} 
\txte^{\gamma(i-1-k)} \txte^{\eta(k-1)} \txte^{-\eta(k-1)} 
K \| U^{k-1}-\widetilde{U}^{k-1},(U^{k-1} -\widetilde{U}^{k-1})'
\|_{\textbf{D}^{2\alpha}_{{W}}}\\
 & \leq \sum\limits_{k=0}^{i+1} C_{S}M_{c} \txte^{(\gamma-\eta)(i-1-k)} 
\txte^{-\eta}  K \txte^{-\eta(k-1)}  \| U^{k-1}-\widetilde{U}^{k-1},
(U^{k-1} -\widetilde{U}^{k-1})'\|_{\textbf{D}^{2\alpha}_{{W}}}.
\end{align*}
Combining this and estimating the third summand in~\eqref{j} yields
\begin{align*}
 & \sum\limits_{k=0}^{i+1} \txte^{-\eta(i-1)} |||S^{\txtc} (\cdot + 
i -1-k ) |||_{2\alpha} \|T^{\txtc}_{R}(\Theta_{k-1}{W}, U^{k-1}, 
(U^{k-1})') [1]- T^{\txtc}_{R}(\Theta_{k-1}{W}, \widetilde{U}^{k-1},
 (\widetilde{U}^{k-1})')[1] \|_{\textbf{D}^{2\alpha}_{{W}}}\\
 & + \txte^{-\eta(i-1)} ||\hat{T}^{\txtc}_{R}(\Theta_{i-1}{W}, U^{i-1}, 
(U^{i-1})') [\cdot]- \hat{T}^{\txtc}_{R}(\Theta_{i-1}{W}, 
\widetilde{U}^{i-1}, (\widetilde{U}^{i-1})')[\cdot] ||_{\textbf{D}^{2\alpha}_{{W}}}\\
 & \leq \sum\limits_{k=0}^{i} \txte^{(\gamma-\eta)(i-1-k)} K (C_{S}M_{c} 
\txte^{-\eta} +1) e^{-\eta(k-1)} \| U^{k-1}-\widetilde{U}^{k-1},(U^{k-1} 
-\widetilde{U}^{k-1})'\|_{\textbf{D}^{2\alpha}_{{W}}}\\
 & \leq K 	 \frac{\txte^{\gamma-\eta} (C_{S}M_{c}\txte^{-\eta}+1)}{1
-\txte^{-(\gamma-\eta)}} ||\mathbb{U}-\widetilde{\mathbb{U}}, \mathbb{U}'-\widetilde{\mathbb{U}}'
||_{BC^{\eta}(D^{2\alpha}_{{W}})}.
\end{align*}
Due to~\eqref{gap:k} we have that 
\begin{align*}
\|J_{R,d}({W},\mathbb{U},\xi) - J_{R,d}({W},\mathbb{\widetilde{U}},
\xi)\|_{BC^{\eta}(D^{2\alpha}_{{W}})} \leq \frac{1}{4} \|\mathbb{U} - 
\widetilde{\mathbb{U}}, \mathbb{U}'-\widetilde{\mathbb{U}}'\|_{BC^{\eta}(D^{2\alpha}_{{W}})}.
\end{align*}
 Applying Banach's fixed-point theorem, we infer that $J_{R,d}({W},
\mathbb{U},\xi^{\txtc})$ possesses a unique fixed-point $\Gamma(\xi^{\txtc},
{W})\in BC^{\eta}(D^{2\alpha}_{{W}})$ for each fixed $\xi^{\txtc}\in \cX^{\txtc}$. 
\end{proof}
 	 
The fixed-point will further help us to characterize the local center manifold.

\begin{lemma}\label{lipfp}
	The mapping $\xi^{\txtc} \mapsto \Gamma(\xi^{\txtc},{W})\in 
	BC^{\eta}(D^{2\alpha}_{{W}})$ is Lipschitz continuous.
\end{lemma}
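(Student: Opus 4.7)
The plan is to exploit the fixed-point characterization from Theorem~\ref{contraction} together with the fact that $J_{R,d}(W,\mathbb{U},\xi)$ depends on $\xi^{\txtc}$ only through the affine term $S^{\txtc}(t+i-1)\xi^{\txtc}$. Fix $\xi^{\txtc},\widetilde{\xi}^{\txtc}\in\cX^{\txtc}$ and abbreviate $\Gamma:=\Gamma(\xi^{\txtc},W)$ and $\widetilde{\Gamma}:=\Gamma(\widetilde{\xi}^{\txtc},W)$. Using the fixed-point identities $\Gamma=J_{R,d}(W,\Gamma,\xi^{\txtc})$ and $\widetilde{\Gamma}=J_{R,d}(W,\widetilde{\Gamma},\widetilde{\xi}^{\txtc})$, I would decompose
\begin{align*}
\Gamma-\widetilde{\Gamma}=\bigl[J_{R,d}(W,\Gamma,\xi^{\txtc})-J_{R,d}(W,\Gamma,\widetilde{\xi}^{\txtc})\bigr]+\bigl[J_{R,d}(W,\Gamma,\widetilde{\xi}^{\txtc})-J_{R,d}(W,\widetilde{\Gamma},\widetilde{\xi}^{\txtc})\bigr].
\end{align*}

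For the first bracket, inspection of~\eqref{j} shows that all terms involving $F_R$ and $G_R$ cancel, so the difference equals the pair $(S^{\txtc}(t+i-1)(\xi^{\txtc}-\widetilde{\xi}^{\txtc}),0)$. Applying exactly the bound derived in the opening lines of the proof of Theorem~\ref{contraction} gives
\begin{align*}
\bigl\|S^{\txtc}(\cdot+i-1)(\xi^{\txtc}-\widetilde{\xi}^{\txtc}),0\bigr\|_{BC^{\eta}(D^{2\alpha}_{W})}\le\sup_{i\in\Z^{-}}C_{S}M_{c}\,\txte^{(\gamma-\eta)(i-1)}|\xi^{\txtc}-\widetilde{\xi}^{\txtc}|\le C_{S}M_{c}\,|\xi^{\txtc}-\widetilde{\xi}^{\txtc}|,
\end{align*}
where the supremum is finite since $\gamma-\eta>0$ and $i-1\le -1$. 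For the second bracket, the initial data are identical, so the contraction estimate from Theorem~\ref{contraction}, with constant $1/4$ under the gap condition~\eqref{gap:k}, applies and yields
\begin{align*}
\bigl\|J_{R,d}(W,\Gamma,\widetilde{\xi}^{\txtc})-J_{R,d}(W,\widetilde{\Gamma},\widetilde{\xi}^{\txtc})\bigr\|_{BC^{\eta}(D^{2\alpha}_{W})}\le\tfrac{1}{4}\,\|\Gamma-\widetilde{\Gamma}\|_{BC^{\eta}(D^{2\alpha}_{W})}.
\end{align*}

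Combining the two bounds via the triangle inequality and absorbing the $\tfrac{1}{4}$-term on the left gives
\begin{align*}
\|\Gamma(\xi^{\txtc},W)-\Gamma(\widetilde{\xi}^{\txtc},W)\|_{BC^{\eta}(D^{2\alpha}_{W})}\le\tfrac{4}{3}\,C_{S}M_{c}\,|\xi^{\txtc}-\widetilde{\xi}^{\txtc}|,
\end{align*}
which is the claimed Lipschitz estimate (with a Lipschitz constant that is deterministic and independent of $W$). There is no real obstacle here: the argument is the textbook Lipschitz-in-parameter statement for parameterised contractions, and the only point requiring care is verifying that the $\xi^{\txtc}$-dependence of $J_{R,d}$ is purely affine so that the truncated nonlinearities $F_R$ and $G_R$ do not contribute to the first bracket.
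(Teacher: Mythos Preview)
Your proof is correct and follows essentially the same approach as the paper: both use the fixed-point identities, split the difference via the triangle inequality into the affine $\xi^{\txtc}$-term and the contraction term, bound the former by the estimate from the beginning of Theorem~\ref{contraction} and the latter by the $1/4$-contraction, and then absorb. The only cosmetic difference is in the constant in front of $|\xi^{\txtc}-\widetilde{\xi}^{\txtc}|$ (the paper records $C_SM_c\txte^{\gamma}$, you record $C_SM_c$), which is immaterial for the Lipschitz conclusion.
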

\begin{proof}
One easily obtains for $\xi^{\txtc}_{1}$, $\xi^{\txtc}_{2}\in \cX^{\txtc}$ that
\begin{align*}
\| \Gamma(\xi^{\txtc}_{1},{W}) - \Gamma(\xi^{\txtc}_{2},{W})\|_{BC^{\eta}
(D^{2\alpha}_{{W}})} &= \|J_{R,d}({W},\Gamma(\xi^{\txtc}_{1},{W}),\xi^{\txtc}_{1}) 
- J_{R,d}({W},\Gamma(\xi_{2}^{\txtc},{W}),\xi^{\txtc}_{2}) \|_{BC^{\eta}
(D^{2\alpha}_{{W}})}\\
& \leq \|J_{R,d}({W},\Gamma(\xi^{\txtc}_{1},{W}),\xi^{\txtc}_{1}) - J_{R,d}
({W},\Gamma(\xi_{1}^{\txtc},{W}),\xi^{\txtc}_{2}) \|_{BC^{\eta}(D^{2\alpha}_{{W}})} \\
&+ \|J_{R,d}({W},\Gamma(\xi^{\txtc}_{1},{W}),\xi^{\txtc}_{2}) - J_{R,d}({W},
\Gamma(\xi_{2}^{\txtc},{W}),\xi^{\txtc}_{2}) \|_{BC^{\eta}(D^{2\alpha}_{{W}})}\\
& \leq \|S^{\txtc}(\cdot+i-1) (\xi^{\txtc}_{1} -\xi^{\txtc}_{2}),0 
\|_{BC^{\eta}(D^{2\alpha}_{{W}})} + \frac{1}{4} \|  \Gamma(\xi^{\txtc}_{1},{W}) 
- \Gamma(\xi^{\txtc}_{2},{W}) \|_{BC^{\eta}(D^{2\alpha}_{{W}})}\\
& \leq C_{S} M_{c}e ^{\gamma} |\xi^{\txtc}_{1}- \xi^{\txtc}_{2}| + \frac{1}{4} \|
  \Gamma(\xi^{\txtc}_{1},{W}) - \Gamma(\xi^{\txtc}_{2},{W}) 
	\|_{BC^{\eta}(D^{2\alpha}_{{W}})},
\end{align*}
which proves the statement. 
\end{proof}

Before stating the existence result of local center manifolds for~\eqref{sde1} 
we must fix further notations. In the following we write $U_{\cdot}(\xi)$ to 
emphasize the dependence of the path $U_{\cdot}$ on the initial condition $\xi$ 
of the RDE~\eqref{sde1}. Furthermore, consider $\Gamma(\xi^{\txtc},{W})$, which 
is the fixed-point of $J_{R,d}({W},\mathbb{U},\xi^{\txtc})$ belonging to 
$BC^{\eta}(D^{2\alpha}_{{W}})$. Again, the first index in the notation 
$\Gamma(\xi^{\txtc},{W})[\cdot,\cdot]$ signifies the position within the 
sequence and the second one refers to the time-variable. Consequently, for a 
fixed $k$, $\Gamma(\xi^{\txtc},{W})[k,\cdot]\in D^{2\alpha}_{{W}}([0,1];\cX)$.
In order to justify the invariance of the center manifold as required in 
Definition \ref{def:cm}, we must show that if $\xi^{\txtc}\in \cM^{\txtc}_{loc}({W})$ 
the solution of the RDE~\eqref{sde1} at a time-point $\tau$ having $\xi^{\txtc}$ 
as initial condition belongs to $\cM^{\txtc}_{loc}(\Theta_{\tau}{W})$. 
Consequently, this means that we have to consider an appropriate shift of ${W}$, 
so in our setting we have to analyze for instance expressions like 
$\Gamma(\xi^{\txtc},\Theta_{\tau}{W})[\cdot,\cdot]$.

\begin{remark}
\label{obvious}
Note that the controlled rough path $(V_{\cdot}, V'_{\cdot})
:=(\Gamma(\xi^{\txtc},{W})[-1,\cdot], (\Gamma(\xi^{\txtc},{W})[-1,\cdot])' )
\in D^{2\alpha}_{{W}}([0,1];\cX)$ solves the RDE
\begin{align}
\label{a1}
	\begin{cases}
	\txtd V = (A V + F_{R} (V))~\txtd t + G_{R}(V) ~\txtd \bm{W}_{t}\\
	V(0)=\Gamma(\xi^{\txtc},{W})[-1,0]\in \cX.
	\end{cases}
\end{align}
This follows from the definition of $J_{R,d}(W\,\mathbb{U}, 
\xi^{\txtc})$ regarding that $\Gamma(\xi^{\txtc},W)$ is the unique 
fixed-point of $J_{R,d}(W,\mathbb{U},\xi^{\txtc})$. To obtain the initial 
condition $\Gamma(\xi^{\txtc},W)[-1,0]$ in~\eqref{a1} one sets $i=0$ 
and $t=0$ in~\eqref{j}.
\end{remark} 

Keeping the previous considerations in mind we can state the next 
fundamental result, which characterizes the local center manifold 
of~\eqref{sde1}. In the following we denote by $B_{\cX^{\txtc}}(0,\frak{r}({W}))$ 
a ball in $\cX^{\txtc}$, which is centered in $0$ and has a random 
radius $\frak{r}({W})$.

\begin{lemma}
\label{localcman} There exists a tempered from below random variable 
$\rho({W})$ such that the local center manifold of~\eqref{sde1} is given 
by the graph of a Lipschitz function, namely
\begin{equation}
\cM^{\txtc}_{loc} ({W})=\{ \xi + h^{\txtc}(\xi,{W}) 
: \xi\in B_{\cX^{\txtc}}(0,\rho({W})) \},
\end{equation}
where we define
\begin{align*}
&	h^{\txtc}(\xi,{W}):=P^{\txts}\Gamma(\xi,{W})[-1,1]|_{B_{\cX^{\txtc}}(0,\rho({W}))},
\end{align*}
and consequently
\begin{align*}
	h^{\txtc}(\xi,{W}) &= \sum\limits_{k=-\infty}^{0}S^{\txts}(-k) 
	\int\limits_{0}^{1} S^{\txts}(1-r) P^{\txts }F(\Gamma(\xi,{W})[k-1,r]) ~\txtd r\\
	& +\sum\limits_{k=-\infty}^{0}S^{\txts}(-k) \int\limits_{0}^{1} 
	S^{\txts}(1-r)P^{\txts } G(\Gamma(\xi,{W})[k-1,r]) ~\txtd \Theta_{k-1}\bm{{W}}_{r}.
\end{align*}
\end{lemma}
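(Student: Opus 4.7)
The argument builds on the fixed point $\Gamma$ of $J_{R,d}$ from Theorem~\ref{contraction} and proceeds in three stages.

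First, I establish the tangency and the representation formula together with the Lipschitz dependence. Assumptions~\nameref{f} and~\nameref{gi} imply $F(0)=G(0)=0$ as well as the vanishing of their first (and second, for $G$) derivatives at the origin, so by the construction of Section~\ref{cut} one has $F_R(0)=G_R(0)=0$; hence the zero sequence is a fixed point of $J_{R,d}(W,\cdot{},0)$, and uniqueness in Theorem~\ref{contraction} forces $\Gamma(0,W)=0$, whence $h^{\txtc}(0,W)=0$. The formula for $h^{\txtc}$ is then obtained by inserting $i=0$, $t=1$ into the identity $\Gamma[-1,1]=J^1_{R,d}(W,\Gamma,\xi)[-1,1]$ and applying the projection $P^{\txts}$, which annihilates every contribution in~\eqref{j} carrying $P^{\txtc}$; what remains is exactly the two stable-subspace sums in the statement, a priori with $F_R,G_R$ in place of $F,G$. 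Lipschitz continuity of $h^{\txtc}(\cdot{},W)$ is then inherited from Lemma~\ref{lipfp} by composing with the bounded linear evaluation $\mathbb{U}\mapsto P^{\txts}U^{-1}_{1}$ from $BC^{\eta}(D^{2\alpha}_W)$ to $\cX^{\txts}$, which by~\eqref{bcnorm} has operator norm at most $\txte^{-\eta}$.

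The second step, which I anticipate as the main technical obstacle, is the construction of the tempered-from-below radius $\rho(W)$. Combining the Lipschitz bound above with $\Gamma(0,W)=0$ yields the pointwise estimate $\|\Gamma(\xi^{\txtc},W)[k-1,\cdot{}]\|_{\textbf{D}^{2\alpha}}\leq L|\xi^{\txtc}|\,\txte^{\eta(k-1)}$ for a deterministic constant $L$ and every $k\leq 0$. For the cut-off $\chi_{R(\Theta_{k-1}W)}$ to be inactive on the $k$-th piece it suffices that this quantity be at most $R(\Theta_{k-1}W)/2$, which suggests setting
\[
\rho(W):=\inf_{k\leq 0}\frac{R(\Theta_{k-1}W)\,\txte^{-\eta(k-1)}}{2L},
\]
and one must then verify that $\rho(W)>0$ almost surely and is itself tempered from below. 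The delicate point is to balance the exponential weight $\txte^{-\eta(k-1)}$ against the only subexponential decay of $R(\Theta_{k-1}W)$ permitted by temperedness in~\eqref{temperedmanradius}; this requires choosing the auxiliary $\varepsilon$ in the temperedness estimate sufficiently small relative to $|\eta|$, and if necessary sharpening the pointwise bound on $\|\Gamma[k-1,\cdot{}]\|_{\textbf{D}^{2\alpha}}$ by exploiting the fixed-point identity directly rather than relying solely on the $BC^{\eta}$-norm.

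Finally, on the random ball $B_{\cX^{\txtc}}(0,\rho(W))$ the cut-off is inactive on every backward piece, hence $F_R(\Gamma[k-1])=F(\Gamma[k-1])$ and $G_R(\Gamma[k-1])=G(\Gamma[k-1])$ for all $k\leq 0$. This replaces $F_R,G_R$ by $F,G$ in the representation obtained above and delivers the formula in the statement, identifying $\cM^{\txtc}_{\mathrm{loc}}(W)$ with a graph of $h^{\txtc}$ over the local ball. The invariance $\varphi(t,W,\cM^{\txtc}_{\mathrm{loc}}(W))\subset \cM^{\txtc}_{\mathrm{loc}}(\Theta_t W)$ required by Definition~\ref{def:cm} (and pointed out in Remark~\ref{obvious}) then follows from the cocycle property of $\varphi$ in Lemma~\ref{cocycle} together with the shift-equivariance of $\Gamma$, as detailed in Appendix~\ref{a}.
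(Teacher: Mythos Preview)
Your three-stage plan closely tracks the paper's proof; stages one and three essentially coincide with it. The departure is in stage two. The paper does not take an infimum: it sets $\rho(W):=\frac{R(\Theta_{-1}W)}{2L_\Gamma\,\txte^{-\eta}}$ directly (manifestly tempered from below) and, to make the cut-off inactive on every backward piece, promotes the shift-equivariance identity $\Gamma(\xi,W)[i-1,\cdot]=\Gamma\bigl(P^{\txtc}\Gamma(\xi,W)[i-1,1],\Theta_i W\bigr)[-1,\cdot]$ of Appendix~\ref{a} from a closing remark to the core of the argument. That identity reduces the bound on piece $i-1$ to the piece $[-1]$ bound on the $\Theta_i W$-fiber, and a further tempered radius $\hat\rho(W)$ is then used to control the shifted center datum $|P^{\txtc}\Gamma(\xi,W)[i-1,1]|$.

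There is a genuine gap in your version: the sign of the weight is backwards. Since $-\beta<\eta<0$ and $k-1\le -1$, one has $-\eta(k-1)=|\eta|\,(k-1)\to-\infty$, hence $\txte^{-\eta(k-1)}\to 0$ exponentially as $k\to-\infty$; together with $R\le 1$ this forces your infimum $\rho(W)=\inf_{k\le 0}\frac{R(\Theta_{k-1}W)\,\txte^{-\eta(k-1)}}{2L}$ to vanish. There is nothing for the subexponential behaviour of $R$ to be ``balanced against''---both factors decay---so no choice of $\varepsilon$ in~\eqref{temperedmanradius} can rescue this. The underlying obstruction is precisely that the bare $BC^\eta$-bound $\|\Gamma[k-1,\cdot]\|_{\textbf{D}^{2\alpha}}\le L|\xi|\,\txte^{\eta(k-1)}$ permits exponential growth of the backward pieces; your own fallback (``sharpening the pointwise bound by exploiting the fixed-point identity directly'') is what is actually needed, and is exactly what the paper implements through the shift identity above.
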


\begin{proof}
First of all, since $F(0)=G(0)=0$ we have that $h^{\txtc}(0,{W})=0$, 
consequently $0\in \cM^{\txtc}_{loc}({W})$. Regarding this, the tangency 
condition will be clear in Section~\ref{smoothness} when we investigate 
the smoothness of $\cM^{\txtc}_{loc}({W})$. We now show that 
$\cM^{\txtc}_{loc}({W})$ is a local center manifold with discrete time 
for the random dynamical system $\varphi$ associated to~\eqref{sde1}. 
Namely, for initial conditions $\xi\in B_{\cX^{\txtc}}(0,\rho({W}))$, 
where $\rho({W})$ will be appropriately chosen, we are going to show that 
\benn
\Gamma(\xi,{W})[-1,1]\in \cM^{\txtc}_{loc}({W})\quad \text{and}\quad 
\Gamma(\xi,{W})[i-1,1]\in \cM^{\txtc}_{loc}(\Theta_{i}{W})
\eenn
for $i\in \mathbb{Z}^{-}$. Furthermore, the corresponding trajectories 
starting in $\Gamma(\xi,{W})[-1,1]$ and $\Gamma(\xi,{W})[i-1,1]$ remain 
within a ball with a tempered radius. We set $\widetilde{y}^{i}_{\cdot}(\xi)
:=\Gamma(\xi,{W})[i-1,\cdot]$. We index $\widetilde{y}$ by $i$ and not $i-1$ 
because we want to derive expressions for $\cM^{c}_{loc}({W})$ respectively 
$\cM^{c}_{loc}(\Theta_{i}{W})$ instead of $\cM^{c}_{loc}(\Theta_{i-1}{W})$. When we compute the 
$\textbf{D}^{2\alpha}_{{W}}$-norm we use for simplicity the notation $\widetilde{y}$. 
However, when we analyze the shift with respect to $W$ of 
$\Gamma(\xi,{W})[\cdot,\cdot]$, i.e., $\Gamma(\xi,\Theta_{i}{W})[\cdot,\cdot]$ 
we explicitly write all the arguments.  \\
In order to justify the claimed assertions we firstly set 
\begin{align}\label{rho}
\rho({W}):=\frac{R(\Theta_{-1}{W})}{2L_{\Gamma}\txte^{-\eta}},
\end{align}	
where $R$ was introduced in~\eqref{r} and $L_{\Gamma}$ denotes the Lipschitz constant of the mapping 
$\xi^{\txtc} \mapsto \Gamma(\xi^{\txtc},{W})\in BC^{\eta}(D^{2\alpha}_{{W}})$ 
as obtained in Lemma \ref{lipfp}. Therefore, 
for $\xi\in \cX$ we have
\begin{align*}
\| \Gamma(P^{\txtc}\xi,{W}) \|_{BC^{\eta}(D^{2\alpha}_{{W}})} 
\leq L_{\Gamma} |P^{\txtc}\xi|,
\end{align*}
so letting  $\xi\in B_{\cX^{\txtc}}(0,\rho({W}))$ entails
\begin{align}\label{kugel1}
\| \Gamma(\xi,{W})\|_{BC^{\eta}(D^{2\alpha}_{{W}})} \leq L_{\Gamma} \rho({W}).
\end{align}
Using the definition of the $\|\cdot\|_{BC^{\eta}(D^{2\alpha}_{{W}})}$-norm, 
the previous inequality rewrites as
\begin{align}
\label{pi}
\sup\limits_{i\in\mathbb{Z}_{-}} \txte^{-\eta(i-1)} \|\widetilde{y}^{i}(\xi), 
(\widetilde{y}^{i}(\xi))' \|_{\textbf{D}^{2\alpha}_{W}} \leq L_{\Gamma} 
\frac{R(\Theta_{-1}W)}{2L_{\Gamma}\txte^{-\eta}}.
\end{align}
Setting $i=0$ in~\eqref{pi} we infer that for $|\xi|\leq\rho(W)$, 
the norm of the trajectory $\widetilde{y}^{-1}_{\cdot}(\xi)=
\Gamma(\xi,W)[-1,\cdot]$ can be estimated by
\begin{align*}
\|\widetilde{y}^{0}(\xi), (\widetilde{y}^{0}(\xi))' \|_{\textbf{D}^{2\alpha}_{{W}}} 
\leq \frac{R(\Theta_{-1}{W})}{2}.
\end{align*}
The next step is to derive that $\Gamma(\xi,W)[i-1,1]=\widetilde{y}^{i}_{1}(\xi) 
\in \cM^{\txtc}_{loc}(\Theta_{i}{W})$ and to show that for the corresponding 
trajectory, the relation 
\begin{align*}
\| \widetilde{y}^{i}(\xi), (\widetilde{y}^{i}(\xi))'
\|_{\textbf{D}^{2\alpha}_{{W}}}\leq \frac{R(\Theta_{i-1}{W})}{2}
\end{align*}
holds true. To this aim, we firstly employ~\eqref{temperedmanradius}. Note 
that this is also valid in the discrete-time setting, according 
to~\cite[Sec. 4.1.1]{Arnold}. This further yields that there exists a positive 
random variable $\hat{\rho}({W})$ and a constant (which we choose $L_{\Gamma}$) such that 
\begin{align}
\label{tempered1}
\rho(\Theta_{i}{W}) \geq\hat{\rho}({W}) L_{\Gamma} \txte^{\eta(i-1)}.
\end{align}
Now, taking $\xi\in B_{\cX^{\txtc}}(0,\hat\rho({W}))$ we have according 
to~\eqref{kugel1} that
\begin{align}
\label{step1}
\sup\limits_{i\in\mathbb{Z}_{-}} \txte^{-\eta(i-1)} \|\widetilde{y}^{i}(\xi), 
(\widetilde{y}^{i}(\xi))' \|_{\textbf{D}^{2\alpha}_{W}} \leq L_{\Gamma} \hat{\rho}({W}).
\end{align}
Combining this with~\eqref{tempered1} entails
\begin{align}
\label{telem}
\rho(\Theta_{i}{W}) \geq \hat{\rho}({W}) L_{\Gamma} \txte^{\eta(i-1)} 
\geq |P^{\txtc}\widetilde{y}^{i}_{1} (\xi)|=|P^{\txtc}\Gamma(\xi,W)[i-1,1]|.
\end{align}
Summarizing we obtained for $\xi\in B_{\cX^{\txtc}}(0,\hat{\rho}(W))$ that
\begin{align*}
& \Gamma(\xi,W)[-1,1] \in \cM^c_{loc}(W)\\
&  \Gamma(\xi,W)[i-1,1]\in \cM^c_{loc}(\Theta_{i}W).
\end{align*}
Secondly, in order to derive the invariance property of $\cM^{\txtc}_{loc}(W)$, 
analogously to~\cite[Lem.~4.7]{GarridoLuSchmalfuss} or~\cite[Lem.~5.5]{LuSchmalfuss} 
one can establish a connection between the fixed-points of $J_{d}({W},\cdot,\cdot)$ 
and $J_{d}(\Theta_{i}{W},\cdot,\cdot)$ for $i\in\mathbb{Z}^{-}$. For the sake of 
completeness, the main steps of this proof are indicated in Appendix~\ref{a}. 
Observing that $\Gamma(\xi,{W})[i-1,0]=\Gamma(\xi,{W})[i-2,1]$ one infers 
using~\eqref{fpomega}
\begin{align}
\label{gammai}
\Gamma(\xi,{W})[i-1,\cdot]=\Gamma(P^{\txtc}\Gamma(\xi,{W})[i-1,1],\Theta_{i}{W})[-1,\cdot],
\end{align}
which tells us that $\Gamma(\xi,{W})[i-1,\cdot]$ can be obtained from 
$\Gamma(\cdot,\Theta_{i}{W})[-1,\cdot]$ on the $\Theta_{i}{W}$-fiber starting 
with the initial condition $P^{\txtc}(\Gamma(\xi,{W})[i-1,1])$. For more information 
and a detailed computation compare~\eqref{fpappendix} and consult 
Appendix~\ref{a}. Regarding this, we derive using~\eqref{step1} and~\eqref{telem}
\begin{align*}
\| \widetilde{y}^{i}_{\cdot}(\xi), ( \widetilde{y}^{i}_{\cdot}(\xi))'
\|_{\textbf{D}^{2\alpha}_{{W}}} \leq L _{\Gamma} \txte^{-\eta}\rho(\Theta_{i}W), 
\end{align*}
and consequently obtain
\begin{align}
\label{onecutoff}
\| \widetilde{y}^{i}_{\cdot}(\xi), ( \widetilde{y}^{i}_{\cdot}(\xi))'
\|_{\textbf{D}^{2\alpha}_{{W}}}\leq\frac{R(\Theta_{i-1}{W})}{2}.
\end{align}
Recalling Remark~\ref{obvious}, the estimate~\eqref{onecutoff} allows us to drop the cut-off parameter $R$ and
leads to 
\begin{align*}
&(\Gamma(\xi,{W})[i-1,t], (\Gamma(\xi,{W})[i-1,t])') 
= (\widetilde{y}^{i}_{t}(\xi), (\widetilde{y}^{i}_{t}(\xi))')\\
&= \left(S(t) \Gamma(\xi,{W})[i-1,0] +\int\limits_{0}^{t} 
S(t-r) F(\widetilde{y}^{i}_{r}) ~\txtd r
 + \int\limits_{0}^{t} S(t-r) G(\widetilde{y}^{i}_{r}) ~\txtd 
\Theta_{i-1}\bm{{W}}_{r}, G(\widetilde{y}^{i}_{t}) \right)\\
 & = \left( S(t)\Gamma(\xi,{W})[i-2,1] +\int\limits_{0}^{t} 
S(t-r) F(\widetilde{y}^{i}_{r}) ~\txtd r
 + \int\limits_{0}^{t} S(t-r) G(\widetilde{y}^{i}_{r}) 
~\txtd \Theta_{i-1}\bm{{W}}_{r}, G(\widetilde{y}^{i}_{t}) \right).
\end{align*}
This means that
\begin{align}
\label{x}
\widetilde{y}^{i}_{t}=\Gamma(\xi,{W})[i-1,t]=
\varphi(t,\Theta_{i-1}{W},\Gamma(\xi,{W})[i-2,1] ), 
\end{align}
so setting $t=1$ in~\eqref{x} one obtains
\begin{align*}
\varphi(1,\Theta_{i-1}{W}, \Gamma(\xi,{W})[i-2,1])=
\Gamma(\xi,{W})[i-1,1]\in \cM^{\txtc}_{loc}(\Theta_{i}{W}).
\end{align*}
Now, the cocycle property established in Lemma~\ref{cocycle} implies that
\begin{align*}
\cM^{\txtc}_{loc}({W}) \ni \Gamma(\xi,{W})[-1,1]= 
\varphi(1,\Theta_{-1}{W}, \Gamma(\xi,{W})[-1,0] ) 
= \varphi(-i+1,\Theta_{i-1}{W}, \Gamma(\xi,{W})[i-1,0] ).
\end{align*}
Letting $j:=-i+1$ in the previous relation yields
\begin{align*}
\varphi(j,\Theta_{-j}{W}, \Gamma(\xi,{W})[-j-1,1]) 
\in \cM^{\txtc}_{loc}({W}).
\end{align*}
Replacing $\Theta_{-j}{W}$ by ${W}$, we finally conclude that
\begin{align*}
 \varphi(j,{W}, \Gamma(\xi,\Theta_{j}{W})[-j-1,1]) 
\in \cM^{\txtc}_{loc}(\Theta_{j}{W}).
\end{align*}
One can extend these results to the continuous-time setting, 
namely one follows the steps presented in the previous proof 
replacing $i-1$ by $i-1 +t$, where $i\in\mathbb{Z}^{-}$ and
$t\in(0,1)$. This can easily be achieved regarding that
\begin{align*}
\varphi(-i+1,\Theta_{i-1}{W},\Gamma(\xi,{W})[i-1,0])=
\varphi(-i+1-t, \Theta_{i-1+t}{W},\Gamma(\xi,{W})[i-1,t]),
\end{align*}
according to the cocycle property. Consequently, for a sufficiently 
small initial condition $\xi$, i.e.~$\xi\in B_{\cX^{\txtc}}(0,\hat{\rho}({W}))$, 
one can show that $\Gamma(\xi,{W})[i-1,t]\in 
\cM^{\txtc}_{loc}(\Theta_{t+i-1}{W})$. Indeed, as argued before one 
constructs as in~\eqref{telem} a random tempered radius $\hat{r}({W})$ 
such that $|P^{\txtc}\Gamma(\xi,{W})[i-1,t]|\leq \hat{r}(\Theta_{t+i-1}{W})$; 
see also Appendix~\ref{a} and~\cite{GarridoLuSchmalfuss,LianLu}.
\end{proof}

Putting all these insights together we infer:

\begin{theorem}
\label{lemma:cm} 
Let Assumptions~\eqref{ass:linearpart}, {\bf(F)} and {\bf(G)} be satisfied. Then the RDE~\eqref{sde1} admits a local center manifold $\cM^{\txtc}_{loc}({W})$ given by
\begin{equation*}
		\cM^{\txtc}_{loc} ({W})=\{ \xi + h^{\txtc}(\xi,{W}) 
		: \xi\in B_{X^{\txtc}}(0,\hat\rho({W})) \},
\end{equation*}
where
\begin{align*}
	h^{\txtc}(\xi,{W}): =\int\limits_{-\infty}^{0} S^{\txts}(-r) P^{\txts }
	F(U_{r}(\xi)) ~\txtd r + \int\limits_{-\infty}^{0} S^{\txts}(-r) 
	P^{\txts }G(U_{r}(\xi))~\txtd \bm{{W}}_{r}.
\end{align*}
\end{theorem}
Note that if one takes $\alpha\in(1/2,1)$, i.e.,~this corresponds to a 
fractional Bronwian motion with Hurst parameter $H\in(1/2,1)$, the above 
computations simplify, recall Remark~\ref{fbm}. Random center manifolds for It\^o/Stratonovich 
for certain classes of SDEs have also been investigated 
in~\cite{Arnold,Boxler1,Boxler2,ChenRobertsDuan,DuDuan}, yet these rely on the transformation of the corresponding SDE in a random ODE as 
shortly indicated at the beginning of Section~\ref{sectfp}. Our
results are obtained by different techniques and in the broader
class of RDEs. 

\section{Smoothness of center manifolds}
\label{smoothness}

We first point out the main arguments, which guarantee the smoothness of 
random invariant manifolds. These have been employed in~\cite[Sec.~4]{DuanLuSchmalfuss} 
and~\cite[Sec.~5]{LuSchmalfuss} for random stable/unstable manifolds and 
in~\cite{ChenRobertsDuan} for center manifolds. From the rough path point of 
view it is essential to investigate the differentiability of the It\^{o}-Lyons 
map, i.e., the map that associates a controlled rough path to the solution 
of the RDE driven by this path; see also~\cite{CoutinLejay} 
or~\cite[Sec.~8.9]{FritzHairer}. For our goals, it suffices to show the 
continuous-differentiability of the mapping $\xi\mapsto U_{\cdot}(\xi)$, 
where $\xi\in \cX$ and $U_{\cdot}(\xi)$ is the solution of~\eqref{sde1} 
as discussed in Section~\ref{rde}. We only point out the main arguments and assumptions that one needs in order to investigate the smoothness of the corresponding manifolds and drop the lengthy computations that do not introduce new ideas.

\begin{remark}
Note that $h^{\txtc}(\cdot,{W})$ is Lipschitz due to Lemma~\ref{lipfp}. In 
this section we establish that additional smoothness assumptions on 
$F$ and $G$ (i.e.~$F:\mathbb{R}^{n}\to\mathbb{R}^{n}$ is $C^{m}$ and 
$G:\mathbb{R}^{n}\to\mathbb{R}^{n\times d}$ is $C^{m+3}_{\txtb}$ for $m\geq 1$) 
lead to better regularity of $h^{\txtc}({\cdot}, W)$.
\end{remark}
We now indicate the main ideas in the classical proof of smoothness of 
invariant manifolds for S(P)DEs. In order to show that $\cM^{\txtc}_{loc}({W})$ 
obtained in Theorem~\ref{lemma:cm} is $C^{1}$ one needs to verify that 
$h^{\txtc}(\xi,{W})$ is continuously differentiable in $\xi\in \cX^{\txtc}$. 
Therefore, one has to establish the differentiability of the solution 
of the Lyapunov-Perron fixed-point problem in $\xi$. For notational simplicity, 
we firstly describe the main ideas without using the controlled rough path 
notation. \\
We consider the continuous-time Lyapunov-Perron transform associated 
to~\eqref{sde1} and do a formal computation to illustrate the main idea. 
For $\xi\in\cX^{\txtc}$ we have
\begin{align*}
U_{t}(\xi) &= S^{\txtc}(t) \xi + \int\limits_{0}^{t} 
S^{\txtc}(t-r)P^{\txtc }F(U_{r}(\xi))~\txtd r + \int\limits_{0}^{t} 
S^{\txtc}(t-r)P^{\txtc }G(U_{r}(\xi))~\txtd \bm{{W}}_{r}\\& 
+ \int\limits_{-\infty}^{t} S^{\txts}(t-r) P^{\txts }F(U_{r}(\xi)) ~\txtd r 
+ \int\limits_{-\infty}^{t} S^{\txts}(t-r) P^{\txts }G(U_{r}(\xi))~\txtd \bm{{W}}_{r}.
\end{align*}
Since we analyze the differentiability of $\xi\mapsto U_{t}(\xi)$ we have 
to investigate the difference $U_{t}(\xi)-U_{t}(\xi_{0})$, where 
$\xi_{0}\in\cX^{\txtc}$. We consider
\begin{align}
\label{repr}
U_{t}(\xi) - U_{t}(\xi_{0}) - \mathcal{T}(U_{t}(\xi) - U_{t}(\xi_{0}) ) 
=S^{\txtc}(t)(\xi-\xi_{0}) + I,
\end{align}
where
\begin{align}
\label{t}
\mathcal{T}(Z) :&=\int\limits_{0}^{t} S^{\txtc}(t-r) P^{\txtc } \txtD F(U_{r}(\xi_{0})) Z
~\txtd r + \int\limits_{0}^{t} S^{\txtc} (t-r)P^{\txts } \txtD G (U_{r}(\xi_{0})) 
Z~\txtd \bm{{W}}_{r}\\&
+ \int\limits_{-\infty}^{t} S^{\txts}(t-r)P^{\txts } \txtD F (U_{r}(\xi_{0})) Z
~\txtd r + \int\limits_{-\infty}^{t} S^{\txts}(t-r) P^{\txts }\txtD G (U_{r}(\xi_{0})) 
Z~\txtd \bm{{W}}_{r} \nonumber
\end{align}
and
\begin{align}\label{i}
I:& = \int\limits_{0}^{t} S^{\txtc}(t-r) P^{\txtc }(F(U_{r} (\xi)) 
- F(U_{r}(\xi_{0})) - \txtD F (U_{r}(\xi_{0}))( U_{r}(\xi) 
- U_{r}(\xi_{0}) ) )) ~\txtd r\\
& +  \int\limits_{0}^{t} S^{\txtc}(t-r)P^{\txtc }( G(U_{r} (\xi)) 
- G(U_{r}(\xi_{0})) - \txtD G (U_{r}(\xi_{0}))( U_{r}(\xi) 
- U_{r}(\xi_{0}) ) )) ~\txtd \bm{{W}}_{r}\nonumber\\
& +\int\limits_{-\infty}^{t} S^{\txts}(t-r)P^{\txts }( F(U_{r} (\xi)) 
- F(U_{r}(\xi_{0})) - \txtD F (U_{r}(\xi_{0}))( U_{r}(\xi) 
- U_{r}(\xi_{0}) )) ) ~\txtd r\nonumber\\
& +  \int\limits_{-\infty}^{t} S^{\txts}(t-r)P^{\txts }( G(U_{r} (\xi)) 
- G(U_{r}(\xi_{0})) - \txtD G (U_{r}(\xi_{0}))( U_{r}(\xi) 
- U_{r}(\xi_{0}) ) )) ~\txtd \bm{{W}}_{r}. \nonumber
\end{align}
Now, one derives conditions which ensure that $\|\mathcal{T}\|<1$, 
in order for $(\mbox{Id}-\mathcal{T})$ to be invertible together with $|I|=
o(|\xi-\xi_{0}|)$ as $\xi\to \xi_{0}$. Then, due to~\eqref{repr} 
one can conclude that
\begin{align}\label{ziel}
U_{t}(\xi) - U_{t}(\xi_{0}) = (\mbox{Id}- \mathcal{T})^{-1} S^{\txtc}(t)(\xi-\xi_{0}) 
+ o(|\xi-\xi_{0}|), \mbox{ as } \xi\to \xi_{0},
\end{align}
which implies that $U_{t}(\xi)$ is differentiable in $\xi$. Its derivative 
is constituted by
\begin{align*}
\txtD_{\xi}U_{t}(\xi) & = S^{\txtc}(t) + \int\limits_{0}^{t} 
S^{\txtc} (t-r) P^{\txtc }\txtD F(U_{r}(\xi))\txtD_{\xi}U_{r}(\xi) 
~ \txtd r + \int\limits_{0}^{t} S^{\txtc}(t-r)P^{\txtc } \txtD G(U_{r}(\xi)) 
\txtD_{\xi} U_{r}(\xi) ~\txtd\bm{{W}}_{r} \\
& + \int\limits_{-\infty}^{t} S^{\txts}(t-r) P^{\txts }\txtD F(U_{r}(\xi)) 
\txtD_{\xi} U_{r}(\xi)~\txtd r + \int\limits_{-\infty}^{t} 
S^{\txts} (t-r) P^{\txts }\txtD G (U_{r}(\xi)) \txtD_{\xi} U_{r}(\xi) 
~\txtd \bm{{W}}_{r}.
\end{align*}
The fact that such a formula is valid for the controlled rough integral 
introduced in Theorem \ref{gubinelliintegral} follows according to the Omega Lemma, see~\cite[Cor.~2]{CoutinLejay}~and~\cite[Thm.~8.10]{FritzHairer}.\\
To prove the continuity of the mapping $\xi\mapsto \txtD_{\xi}U_{t}(\xi)$ 
one computes for $\xi$ and $\xi_{0}\in \cX^{\txtc}$ 
\begin{align}\label{diff1}
\txtD_{\xi}U_{t}(\xi) - \txtD_{\xi}U_{t}(\xi_{0})& = 
\int\limits_{0}^{t} S^{\txtc}(t-r)P^{\txtc } ( \txtD F (U_{r}(\xi)  ) 
\txtD_{\xi}U_{r}(\xi) - \txtD F(U_{r}(\xi_{0})) \txtD_{\xi}U_{r}(\xi_{0}) ) ~\txtd r\\
& + \int\limits_{-\infty}^{t} S^{\txts} (t-r) P^{\txts } (\txtD G (U_{r}(\xi)  ) 
\txtD_{\xi}U_{r}(\xi) - \txtD G(U_{r}(\xi_{0})) \txtD_{\xi}U_{r}(\xi_{0})) 
~\txtd \bm{{W}}_{r}\nonumber\\
& = \int\limits_{0}^{t} S^{\txtc}(t-r)P^{\txtc } ( \txtD F(U_{r}(\xi) ) 
( \txtD_{\xi}U_{r}(\xi) -\txtD_{\xi}(U_{r}(\xi_{0})  )  ) ~\txtd r\nonumber\\
& + \int\limits_{-\infty}^{t} S^{\txts} (t-r) P^{\txts } ( \txtD G(U_{r}(\xi) ) 
( \txtD_{\xi}U_{r}(\xi) -\txtD_{\xi}(U_{r}(\xi_{0})  )  )  
~\txtd \bm{{W}}_{r} + \overline{I}, \nonumber
\end{align} 
where
\begin{align*}
\overline{I}&=\int\limits_{0}^{t} S^{\txtc} (t-r) P^{\txtc }(\txtD 
F(U_{r}(\xi)) -\txtD F(U_{r} (\xi_{0})) \txtD_{\xi}U(\xi_{0}))  ~\txtd r\\
& + \int\limits_{-\infty}^{t} S^{\txts} (t-r)P^{\txts } (\txtD G(U_{r}(\xi)) 
-\txtD G(U_{r} (\xi_{0})) \txtD_{\xi}U(\xi_{0}) ) ~\txtd \bm{W}_{r}.
\end{align*}

For further details and properties of flows associated to rough 
differential equations, see~\cite{B, CoutinLejay} and the references 
specified therein. Regarding all these preliminary considerations, one can show the following result. The proof relies on similar computations to the proof of Theorem~\ref{contraction} and~\cite[Sec.~4]{DuanLuSchmalfuss}.

\begin{theorem}
\label{glattheit} Assume that $F$ is $C^{m}$ and $G$ is $C^{m+3}_{\txtb}$ 
for $m\geq 1$. If $-\beta< m\eta<\gamma$ and
\begin{align*}
	 K  \left( \frac{ \txte^{\beta+j\eta}(C_{S}M_{s}\txte^{-\eta j}
	+1)}{1-\txte^{-(\beta+\eta j)}} +  \frac{\txte^{\gamma-\eta j} 
	(C_{S}M_{c}\txte^{-\eta j}+1)}{1-\txte^{-(\gamma-\eta j)}} \right) <1,
	~\mbox{ for } 1\leq j\leq m,
	\end{align*}
	then $\cM^{\txtc}_{loc}({W})$ is a local $C^{m}$-center manifold.
\end{theorem}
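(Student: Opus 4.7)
The plan is to prove the theorem by induction on $m$, showing at each stage that $\xi \mapsto \Gamma(\xi,W) \in BC^{j\eta}(D^{2\alpha}_W)$ is $C^j$, and then transferring this smoothness to $h^\txtc(\cdot,W) = P^\txts \Gamma(\cdot,W)[-1,1]$. The weighted space $BC^{j\eta}(D^{2\alpha}_W)$, with exponential weight $\txte^{-j\eta(i-1)}$ replacing the $\txte^{-\eta(i-1)}$ in~\eqref{bcnorm}, is tailored so that the $j$-th derivative of the Lyapunov–Perron fixed point lives naturally there; the hypothesis $-\beta < m\eta < \gamma$ guarantees that all relevant sums in the stable/centre parts remain finite for $1\leq j\leq m$.

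For the base case $m=1$, I would reinterpret the discrete Lyapunov-Perron map $J_{R,d}$ as a smooth map on the product Banach space $\cX^\txtc \times BC^{\eta}(D^{2\alpha}_W)$ and apply the uniform contraction principle. The contraction property in $\mathbb{U}$ was already verified in Theorem~\ref{contraction}; I need to show that $J_{R,d}$ is $C^1$ jointly in $(\xi^\txtc,\mathbb{U})$. This requires differentiating the rough integrals $T^{\txts/\txtc}_R$ and $\hat T^\txtc_R$ along the path argument, which is precisely the content of the Omega lemma (\cite[Thm.~8.10]{FritzHairer}; \cite[Cor.~2]{CoutinLejay}) applied under the cut-off~--- smoothness of $F\in C^1$ and $G\in C^4_{\txtb}$ gives differentiability of the maps $\overline{F}\circ\chi_R$ and $\overline{G}\circ\chi_R$ as operators on $D^{2\alpha}_W$. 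The implicit function theorem then produces a $C^1$ fixed point $\xi^\txtc\mapsto\Gamma(\xi^\txtc,W)$, and differentiating~\eqref{j} formally (as in~\eqref{diff1}) identifies $\txtD_\xi\Gamma$ as the unique fixed point of the linearised equation
\begin{equation*}
D = \cL_W(D) + S^\txtc(\cdot+i-1)\,\Id_{\cX^\txtc},
\end{equation*}
where $\cL_W$ is the discrete analogue of~\eqref{t} with $F,G$ replaced by $\txtD F(U_r(\xi))$, $\txtD G(U_r(\xi))$. The condition $j=1$ in the theorem's hypothesis is exactly what makes $\cL_W$ a contraction on $BC^{\eta}(D^{2\alpha}_W)$.

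For the inductive step, assume $\xi\mapsto\Gamma(\xi,W)$ is $C^{k-1}$ into $BC^{(k-1)\eta}(D^{2\alpha}_W)$. Differentiating the fixed-point relation $k$ times and applying the Faà di Bruno formula produces an equation of the form
\begin{equation*}
\txtD^k_\xi \Gamma = \cL^{(k)}_W(\txtD^k_\xi \Gamma) + \Psi_k(\txtD^{k-1}_\xi\Gamma,\ldots,\txtD_\xi\Gamma,\Gamma),
\end{equation*}
where $\cL^{(k)}_W$ is the same linear operator as $\cL_W$ but now acting on the space $BC^{k\eta}(D^{2\alpha}_W)$, and $\Psi_k$ is a polynomial expression in lower-order derivatives together with $\txtD^j F$, $\txtD^j G$ for $j\leq k$. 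The $C^{m}$ and $C^{m+3}_{\txtb}$ regularity of $F$ and $G$ is exactly what is needed to make $\Psi_k$ well-defined and continuous on the relevant product of weighted spaces (three extra derivatives on $G$ account for the fact that composition with a $C^3_{\txtb}$ function is needed to keep a controlled rough path structure, cf.~\eqref{comp:smooth:funct}). The gap condition at index $j=k$ ensures $\|\cL^{(k)}_W\|<1$ on $BC^{k\eta}(D^{2\alpha}_W)$, so the equation has a unique solution, which we identify with $\txtD^k_\xi\Gamma$. This closes the induction.

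The main obstacle is the step $k=m$ with the rough integrals: one must verify that the inductive differentiation truly produces controlled rough paths with the expected Gubinelli derivatives, rather than just Hölder paths, and that the $\Psi_k$-term lives in $BC^{k\eta}(D^{2\alpha}_W)$ with bounds that are quantitatively compatible with the smallness of $K$ dictated by the gap condition. This is a quantitative version of the Omega lemma iterated $k$ times, tracked through the discretized Lyapunov-Perron setup of Section~\ref{lpm}; the cut-off $\chi_R$ keeps all norms uniformly bounded so that the composition estimates \eqref{comp:smooth:funct} and Lemma~\ref{differ} (and its higher-order analogues obtained by the same mean-value argument using $\txtD^j G$) can be applied at each induction step. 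Once the smoothness of $\Gamma$ is established on $BC^{m\eta}(D^{2\alpha}_W)$, evaluating at $(i,t)=(-1,1)$ and projecting onto $\cX^\txts$ gives the $C^m$ regularity of $h^\txtc(\cdot,W)$, and setting $\xi=0$ in $\txtD\Gamma$ together with $\txtD F(0)=\txtD G(0)=0$ yields the tangency condition $\txtD h^\txtc(0,W)=0$ announced in Definition~\ref{def:cm}.
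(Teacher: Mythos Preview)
Your proposal is correct and follows essentially the same strategy as the paper: the paper sketches precisely the argument that the derivative of the fixed point satisfies a linearised Lyapunov--Perron equation (your $\cL_W$ is the paper's operator $\mathcal{T}$ in~\eqref{t}), that the gap condition at level $j$ makes this operator a contraction on the appropriate weighted space, and it invokes the Omega Lemma for the differentiability of the rough integrals. The paper presents the $C^1$ step via the direct estimate $(\mbox{Id}-\mathcal{T})^{-1}$ together with $|I|=o(|\xi-\xi_0|)$ rather than through the uniform contraction principle, and it defers the higher-order induction to \cite[Sec.~4]{DuanLuSchmalfuss} without spelling out the Fa\`a di Bruno bookkeeping or the scale of spaces $BC^{j\eta}(D^{2\alpha}_W)$ that you make explicit; but these are presentational differences, not substantive ones.
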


\appendix
\section{Properties of the discrete Lyapunov-Perron map}
\label{a}

We start by pointing out following general technique, which is required 
in order to prove the invariance of a manifold $\widetilde{\cM}^{\txtc}$ 
for a continuous-time random dynamical system $\widetilde{\varphi}$, i.e.
\benn
\widetilde{\varphi}(T,{W},\widetilde{\cM}^{\txtc}({W}))\subset 
\widetilde{\cM}^{\txtc}(\Theta_t{W}). 
\eenn
Again, $\widetilde{\cM}^{\txtc}({W})$ is given by the graph of a 
function $\widetilde{h}(\cdot,{W}):\cX^{\txtc}\to \cX^{\txts}$, i.e., 
$\widetilde{\cM}^{\txtc}({W}):=\{\widetilde{\xi}^{\txtc} 
+ \widetilde{h} (\widetilde{\xi}^{\txtc},{W}) \mbox{ : } 
\widetilde{\xi}^{\txtc} \in \cX^{\txtc} \}$, where 
$\widetilde{h}(\widetilde{\xi}^{\txtc},{W}):=P^{\txts}
\widetilde{\Gamma}(\widetilde{\xi}^{\txtc},{W})[0]$.
Here we denote with $\widetilde{\Gamma}(\xi^{\txtc},{W})[\cdot]$ the 
fixed-point of the corresponding Lyapunov-Perron map, where $\cdot$ 
stands only for the time-variable. For further details, 
see~\cite[Lem.~5.5]{LuSchmalfuss} or~\cite[Lem.~4.7]{GarridoLuSchmalfuss}, 
where similar computations are performed. 

Typically, one shows that for $\widetilde{\xi}^{\txtc}\in \cX^{\txtc}$ 
small enough it holds that $P^{\txtc}\widetilde{\Gamma}(\xi^{\txtc},{W})[0]
\in \widetilde{M}^{\txtc}({W})$. To justify the invariance property one 
has to infer that $\widetilde{\varphi}(T,{W},\widetilde{\Gamma}(\xi^{\txtc},
{W})[0])\in \widetilde{M}^{\txtc}(\Theta_t{W})$. To this aim, regarding the 
structure of the manifold, one needs to analyze $\widetilde{h}$ on the 
$\Theta_t{W}$-fiber replacing $\widetilde{\xi}^{\txtc}$ with 
$P^{\txtc}\widetilde{\varphi}(T,{W},\widetilde{\Gamma}(\xi^{\txtc},{W})[0]) $, 
more precisely $\widetilde{h}(P^{\txtc}\widetilde{\varphi}(T,{W},
\widetilde{\Gamma}(\widetilde{\xi}^{\txtc},{W})[0]), \Theta_t{W} )$.

Therefore one needs to derive an expression for the fixed-point of the 
Lyapunov-Perron transform on the $\Theta_t{W}$-fiber replacing 
$\widetilde{\xi}^{\txtc}$ by $P^{\txtc}\widetilde{\varphi}(T,{W},
\widetilde{\xi}^{\txtc}) $, i.e., $\widetilde{\Gamma}(P^{\txtc}
\widetilde{\varphi}(T,{W},\widetilde{\xi}^{\txtc}),\Theta_t{W})[\cdot]$, 
compare~\eqref{gammai}. Hence, it is enough to prove that
\begin{align}
\label{gammat}
\widetilde{\Xi}_{T}(\sigma,{W})=
\begin{cases}
\widetilde{\Gamma}(\widetilde{\xi}^{\txtc},{W})[\sigma+T] &: \sigma<T\\
\widetilde{\varphi}(\sigma+T,{W},\widetilde{\Gamma}(
\widetilde{\xi}^{\txtc},{W})[0]) &: \sigma\in[-T,0]
\end{cases}
\end{align}
is the fixed-point of $\widetilde{\Gamma}( P^{\txtc}\widetilde{\varphi}(
T,{W},\widetilde{\xi}^{\txtc}) ,\Theta_t{W})[\sigma]$,
see~\cite[Lem.~5.5]{LuSchmalfuss}. This implies
\begin{align}
\label{fpappendix}
P^{\txts}\widetilde{\Xi}_{T}(0,{W}) = \widetilde{h}(P^{\txtc}
\widetilde{\varphi}(T,{W},\widetilde{\Gamma}(\widetilde{\xi}^{\txtc},
{W})[0]),  \Theta_t{W}),
\end{align}
which yields that $\widetilde{\varphi}(T,{W},\widetilde{\Gamma}(
\widetilde{\xi}^{\txtc},{W})[0])\in \widetilde{M}^{\txtc}(\Theta_t{W})$.
For completeness we now derive the analogue of~\eqref{gammat} for 
discrete-time random dynamical systems, compare~\eqref{gammai}. We 
indicate here the computation, which allows us to express the fixed-point 
of $J_{d}$ as introduced in Section~\ref{sectfp}, when shifting the fiber 
of the noise. More precisely, we investigate a connection between the 
fixed-points of $J_{R,d}({W},\cdot)$ and $J_{R,d}(\Theta_{i}{W},\cdot)$ 
for $i\in\mathbb{Z}_{-}$. Let $\varphi_R(\cdot,W,\cdot)$ denote the solution of~\eqref{a1} on $[0,1]$.

\begin{lemma}
\label{invfp}
Let $\xi^{\txtc}_{-1}:= P^{\txtc} \Gamma(\xi^{\txtc},{W})[-1,0]$. 
The fixed-point $\Gamma(\xi^{\txtc},{W})$ of $J_{R,d}({W},\cdot,\xi^{\txtc})$ 
can be expressed by
\begin{align}\label{fpomega}
		\Xi(\xi^{\txtc},{W})[i,\cdot]:=\begin{cases}
		\Gamma(\xi^{\txtc}_{-1},\Theta_{-1}{W})[i+1,\cdot], 
		& \mbox{if } i=-2,-3,\ldots\\
			\varphi_{R}(\cdot,\Theta_{-1}{W}, 
			\Gamma(\xi^{\txtc}_{1},\Theta_{-1}{W} )[-1,1] ), &\mbox{if } i=-1.
		\end{cases}
\end{align}
\end{lemma}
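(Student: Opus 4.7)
The plan is to verify that the sequence $\Xi(\xi^{\txtc}, W)$ defined in \eqref{fpomega} is a fixed point of $J_{R,d}(W, \cdot, \xi^{\txtc})$ in $BC^{\eta}(D^{2\alpha}_W)$; since the uniqueness of such a fixed point is guaranteed by Theorem \ref{contraction}, this forces $\Xi(\xi^{\txtc}, W) = \Gamma(\xi^{\txtc}, W)$ and hence yields \eqref{fpomega}. First, one checks that $\Xi(\xi^{\txtc}, W)$ indeed belongs to $BC^{\eta}(D^{2\alpha}_W)$ and satisfies the glueing condition $\Xi[i, 0] = \Xi[i-1, 1]$. Membership for slots $i \leq -2$ follows because $\Gamma(\xi^{\txtc}_{-1}, \Theta_{-1}W)$ lies in the analogous space on the $\Theta_{-1}W$-fiber, and reindexing by one produces only a factor $\txte^{\eta}$ in the weighted norm; for the slot $i = -1$ the path $\Xi[-1, \cdot]$ is a controlled rough path on $[0,1]$ by Theorem \ref{fpr}. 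The glueing for $i \leq -2$ is inherited directly from $\Gamma(\xi^{\txtc}_{-1}, \Theta_{-1}W)$, and at the boundary it reduces to $\varphi_R(0, \Theta_{-1}W, \Gamma(\xi^{\txtc}_{-1}, \Theta_{-1}W)[-1, 1]) = \Gamma(\xi^{\txtc}_{-1}, \Theta_{-1}W)[-1, 1] = \Xi[-2, 1]$.

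The main step is to verify the fixed-point identity $J_{R,d}(W, \Xi(\xi^{\txtc}, W), \xi^{\txtc})[i, \cdot] = \Xi(\xi^{\txtc}, W)[i, \cdot]$ slot by slot. For $i \leq -2$, one expands the right-hand side of \eqref{j} by substituting $\Xi[k-1, \cdot] = \Gamma(\xi^{\txtc}_{-1}, \Theta_{-1}W)[k, \cdot]$ for every slot index $k-1 \leq -2$, and $\Xi[-1, \cdot] = \varphi_R(\cdot, \Theta_{-1}W, \Gamma(\xi^{\txtc}_{-1}, \Theta_{-1}W)[-1, 1])$ in the single slot $k = 0$. Combined with the cocycle relation $\Theta_{k-1} W = \Theta_{k}(\Theta_{-1}W)$ for the noise shifts and the reindexing $k \mapsto k - 1$ applied to each sum, the resulting expression matches term-by-term the definition of $J^{1}_{R,d}(\Theta_{-1}W, \Gamma(\xi^{\txtc}_{-1}, \Theta_{-1}W), \xi^{\txtc}_{-1})[i+1, t]$; crucially, the identity $\xi^{\txtc}_{-1} = P^{\txtc} \Gamma(\xi^{\txtc}_{-1}, \Theta_{-1}W)[-1, 1]$ guarantees that $\xi^{\txtc}_{-1}$ is the correct center initial datum for the shifted problem. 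The fixed-point property of $\Gamma(\xi^{\txtc}_{-1}, \Theta_{-1}W)$ on the $\Theta_{-1}W$-fiber then yields $\Gamma(\xi^{\txtc}_{-1}, \Theta_{-1}W)[i+1, t] = \Xi[i, t]$. For the boundary slot $i = -1$, reversing the discretization of Subsection \ref{dlp} identifies $J^{1}_{R,d}(W, \Xi, \xi^{\txtc})[-1, t]$ with the continuous-time Lyapunov-Perron value at time $t - 1 \in [-1, 0]$; by Remark \ref{obvious} this equals the solution of the RDE \eqref{a1} starting from $\Gamma(\xi^{\txtc}, W)[-1, 0]$, which by the already-verified $i = -2$ case equals $\Gamma(\xi^{\txtc}_{-1}, \Theta_{-1}W)[-1, 1]$, producing precisely $\varphi_R(t, \Theta_{-1}W, \Gamma(\xi^{\txtc}_{-1}, \Theta_{-1}W)[-1, 1]) = \Xi[-1, t]$.

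The main obstacle is purely combinatorial: aligning the summation bounds $\sum_{k=0}^{i+1}$ and $\sum_{k=-\infty}^{i-1}$ before and after the reindexing, and ensuring that every noise shift $\Theta_{k-1}W$ in $J_{R,d}(W, \cdot, \xi^{\txtc})$ aligns via the cocycle property with the corresponding shift in $J_{R,d}(\Theta_{-1}W, \cdot, \xi^{\txtc}_{-1})$. Once the algebraic matching is carried through term by term, uniqueness from Theorem \ref{contraction} closes the proof; the Gubinelli derivative components then coincide automatically, since they are forced by the already-matched path components.
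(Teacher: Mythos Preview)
Your overall strategy---show that $\Xi$ is a fixed point of $J_{R,d}(W,\cdot,\xi^{\txtc})$ and invoke the uniqueness from Theorem~\ref{contraction}---is exactly the route taken in the paper, and your treatment of the slots $i\leq -2$ (substitute $\Xi[k-1,\cdot]=\Gamma(\xi^{\txtc}_{-1},\Theta_{-1}W)[k,\cdot]$, use the cocycle identity $\Theta_{k-1}W=\Theta_{k}(\Theta_{-1}W)$, reindex, and appeal to the fixed-point property on the $\Theta_{-1}W$-fiber) follows the paper's computation on the stable part closely.

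There is, however, a genuine gap in your handling of the boundary slot $i=-1$. You write that, after reversing the discretization, $J^{1}_{R,d}(W,\Xi,\xi^{\txtc})[-1,t]$ becomes the continuous-time Lyapunov--Perron value $J_{R}(W,U^{\Xi})[t-1]$, and then invoke Remark~\ref{obvious} to identify this with the RDE solution starting from $\Gamma(\xi^{\txtc},W)[-1,0]$. But Remark~\ref{obvious} is a statement about the \emph{fixed point} $\Gamma(\xi^{\txtc},W)$ itself, not about $J_{R}$ applied to an arbitrary input $U^{\Xi}$; and you further identify $\Gamma(\xi^{\txtc},W)[-1,0]$ with $\Gamma(\xi^{\txtc}_{-1},\Theta_{-1}W)[-1,1]$ ``by the already-verified $i=-2$ case'', which is precisely (part of) the conclusion $\Xi=\Gamma$ you are trying to establish. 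This is circular. The paper avoids this by a direct computation: it expands $\Xi[-1,t]=\varphi_{R}(t,\Theta_{-1}W,\Gamma(\xi^{\txtc}_{-1},\Theta_{-1}W)[-1,1])$ via the mild-solution formula for the modified RDE, inserts the already-computed expression for $\Gamma^{\txts}(\xi^{\txtc}_{-1},\Theta_{-1}W)[-1,1]$ (and analogously for the center component), reindexes, and matches term by term with $J^{1}_{R,d}(W,\Xi,\xi^{\txtc})[-1,t]$. You should replace your Remark~\ref{obvious} shortcut by this direct expansion; once that is done, the remainder of your argument goes through.
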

	
\begin{proof}
To prove the statement we are going to show that $\Xi(\xi^{\txtc},{W})$ is a 
fixed-point of $J_{R,d}({W},\cdot,\xi^{\txtc})$. Due to the uniqueness of the 
fixed-point we may then infer that 
\benn
\Xi(\xi^{\txtc},{W})=\Gamma(\xi^{\txtc},{W}).
\eenn	
Since $\Gamma(\xi^{\txtc}_{-1},\Theta_{-1}{W} )$ is the fixed-point 
of $J_{R,d}(\Theta_{-1}{W},\cdot,\xi^{\txtc}_{-1})$ we explore~\eqref{j}. For notational simplicity we supress the Gubinelli derivative in the following computation, i.e~we only use the expression for $J^{1}_{R,d}(\cdot,\cdot,\cdot)$. We also recall that for a fixed $k$, $\Gamma(\xi,W)[k,\cdot]\in D^{2\alpha}_W([0,1];\cX)$. Consequently, $F_R(\Gamma(\xi,W)[k,\cdot])$ indicates that $F_R$ is applied to the path $\Gamma(\xi,W)[k,\cdot]$. \\

On the stable part we compute
\begin{flalign*}
 	\Gamma^{\txts}(\xi^{\txtc}_{-1}, \Theta_{-1}{W} )[-1,1]=
	\sum\limits_{k=-\infty}^{-1} S^{\txts}(-k) &\Bigg (
	\int\limits_{0}^{1} S^{\txts}(1-r) P^{\txts }F_{R}(\Gamma(\xi^{\txtc}_{1},
	\Theta_{-1}{W} )[k-1,\cdot])(r) ~\txtd r \\
 	& + \int\limits_{0}^{1} S^{\txts}(1-r)P^{\txts } G_{R}(\Gamma(\xi^{\txtc}_{1},
	\Theta_{-1}{W} )[k-1,\cdot])(r) ~\txtd \Theta_{k-1}\Theta_{-1} \bm{{W}}_{r} \Bigg)
 \end{flalign*}
 \vspace*{-8 mm}
 \begin{align*}
 	& +\int\limits_{0}^{1} S^{\txts}(1-r)P^{\txts } F_{R}(\Gamma(\xi^{\txtc}_{1},
	\Theta_{-1}{W} )[-1,\cdot])(r) ~\txtd r\\
 	& + \int\limits_{0}^{1} S^{\txts} (1-r)P^{\txts } G_{R}(\Gamma(\xi^{\txtc}_{1},
	\Theta_{-1}{W} )[-1,\cdot])(r) ~\txtd \Theta_{-1}\Theta_{-1}{W}
 	\end{align*}
 	\vspace*{-5 mm}
 	\begin{align*}
 \hspace*{30 mm}	 = \sum\limits_{k=-\infty}^{0} S^{\txts}(-k) &
\Bigg (\int\limits_{0}^{1} S^{\txts}(1-r)P^{\txts } F_{R}(\Gamma(\xi^{\txtc}_{1},
\Theta_{-1}{W} )[k-1,\cdot])(r) ~\txtd r \\
 	& + \int\limits_{0}^{1} S^{\txts}(1-r)P^{\txts } G_{R}(\Gamma(\xi^{\txtc}_{1},
	\Theta_{-1}{W} )[k-1,\cdot])(r) ~\txtd \Theta_{k-1}\Theta_{-1} \bm{{W}}_{r} \Bigg).
\end{align*}
Using~\eqref{fpomega} for $i=-1$ we have on the stable component
\begin{flalign*}
&\hspace*{-75 mm} \Xi^{\txts}(\xi^{\txtc},{W})[-1,t]=
\varphi^{\txts}_{R}(t,\Theta_{-1},{W}, \Gamma(\xi^{\txtc}_{-1},\Theta_{-1}{W} )[-1,1])
\end{flalign*}
\begin{flalign*}
 = \sum\limits_{k=-\infty}^{0} S^{\txts}(t-k) &\Bigg( \int\limits_{0}^{1} P^{\txts }
F_{R}( \Gamma(\xi^{\txtc}_{-1},\Theta_{-1}{W} )[k-1,\cdot] )(r)~\txtd r \\
 &+ \int\limits_{0}^{t}  P^{\txts }G_{R}( \Gamma(\xi^{\txtc}_{-1},\Theta_{-1}{W} )[k-1,\cdot] )(r) 
~\txtd \Theta_{k-1}\Theta_{-1}\bm{{W}}_{r} \Bigg)
 \end{flalign*}
 \begin{flalign*}
 & + \int\limits_{0}^{t}S^{\txts}(t-r)P^{\txts }  F_{R}( \Xi(\xi^{\txtc}_{-1},
\Theta_{-1}{W} )[-1,\cdot] )(r)~\txtd r + \int\limits_{0}^{t} S^{\txts}(t-r) P^{\txts }
G_{R}( \Xi(\xi^{\txtc}_{-1},\Theta_{-1}{W} )[-1,\cdot] )(r) ~\txtd \Theta_{-1}\bm{{W}}_{r}\\
 &  = \sum\limits_{k=-\infty}^{-1} S^{\txts}(t-k-1)  \Bigg( 
\int\limits_{0}^{1} P^{\txts }F_{R}( \Gamma(\xi^{\txtc}_{-1},\Theta_{-1}{W} )[k,\cdot] )(r)
~\txtd r + \int\limits_{0}^{t}P^{\txts }  G_{R}( \Gamma(\xi^{\txtc}_{-1},\Theta_{-1}{W} )
[k,\cdot] )(r) ~\txtd \Theta_{k-1}\bm{{W}}_{r} \Bigg)\\ 
& + \int\limits_{0}^{t}S^{\txts}(t-r) P^{\txts } F_{R}( \Xi(\xi^{\txtc}_{-1},
\Theta_{-1}{W} )[-1,\cdot] )(r)~\txtd r + \int\limits_{0}^{t} S^{\txts}(t-r) P^{\txts }G_{R}( 
\Xi(\xi^{\txtc}_{-1},\Theta_{-1}{W} )[-1,\cdot] )(r) ~\txtd \Theta_{-1}\bm{{W}}_{r}
\end{flalign*}
\vspace*{-4 mm}
\begin{flalign*}
\hspace*{-60 mm} =\sum\limits_{k=-\infty}^{-1} S^{\txts}(t-k-1) 
&\Bigg(\int\limits_{0}^{1}S^{\txts}(1-r)P^{\txts }  F_{R}( \Xi(\xi^{\txtc},{W} )
[k-1,\cdot] )(r)~\txtd r\\
 & + \int\limits_{0}^{1} S^{\txts}(1-r) P^{\txts }G_{R}( \Xi(\xi,{W} )[k-1,\cdot] ) (r)
~\txtd \Theta_{k-1}\bm{{W}}_{r}\Bigg)
\end{flalign*}
 
\begin{flalign*}
&+ \int\limits_{0}^{t}S^{\txts}(t-r)P^{\txts }  F_{R}( \Xi(\xi^{\txtc}_{-1},
\Theta_{-1}{W} )[-1,\cdot] )(r)~\txtd r + \int\limits_{0}^{t} S^{\txts}(t-r) P^{\txts }
G_{R}( \Xi(\xi^{\txtc}_{-1},\Theta_{-1}{W} )[-1,\cdot] )(r) ~\txtd \Theta_{-1}
\bm{{W}}_{r},\\
\end{flalign*}
where in the last step we use again~\eqref{fpomega}. Furthermore, regarding 
that $\Gamma(\xi^{\txtc}_{-1},\Theta_{-1}{W} )$ is the fixed-point of 
$J_{R,d}(\Theta_{-1}{W},\cdot, \xi^{\txtc}_{-1})$ we now compute analogously 
to the first step from~\eqref{j} for $i=-2,-3,\ldots$
\begin{flalign*}
 \Gamma^{\txts} (\xi^{\txtc}_{-1},\Theta_{-1}{W})[i+1,t]=
\sum\limits_{k=-\infty}^{i+1} S^{\txts}(t+i+1-k)&
 \Bigg (\int\limits_{0}^{1} S^{\txts}(1-r)P^{\txts } F_{R}(
\Gamma(\xi^{\txtc}_{1},\Theta_{-1}{W} )[k-1,\cdot])(r) ~\txtd r \\
& \hspace*{-3 mm}+ \int\limits_{0}^{1} S^{\txts}(1-r)P^{\txts } G_{R}(
\Gamma(\xi^{\txtc}_{1},\Theta_{-1}{W} )[k-1,\cdot])(r) ~\txtd 
\Theta_{k-1}\Theta_{-1} \bm{{W}}_{r} \Bigg)
\end{flalign*}
\vspace*{-6 mm}
\begin{flalign*}
& +\int\limits_{0}^{t}S^{\txts}P(t-r)P^{\txts }  F_{R}(\Gamma(\xi^{\txtc}_{1},
\Theta_{-1}{W} )[i+1,\cdot])(r) ~\txtd r\\
& + \int\limits_{0}^{t}S^{\txts }(t-r)P^{\txts }  F_{R}(\Gamma(\xi^{\txtc}_{1},
\Theta_{-1}{W} )[i+1,\cdot])(r) ~\txtd\Theta_{i+1}\Theta_{-1}\bm{{W}}_ {r}
 \end{flalign*}
 \vspace*{-6 mm}
 \begin{flalign*}
 \hspace*{30 mm}=\sum\limits_{k=-\infty}^{i} S^{\txts}(t+i-k) &
 \Bigg (\int\limits_{0}^{1} S^{\txts}(1-r)P^{\txts } F_{R}(\Gamma(
\xi^{\txtc}_{1},\Theta_{-1}{W} )[k,\cdot])(r) ~\txtd r \\
 & + \int\limits_{0}^{1} S^{\txts}(1-r) P^{\txts }G_{R}(\Gamma(
\xi^{\txtc}_{1},\Theta_{-1}{W} )[k,\cdot])(r) ~\txtd \Theta_{k-1} \bm{{W}}_{r} \Bigg)
 \end{flalign*}
 \vspace*{-6 mm}
 \begin{flalign*}
 & +\int\limits_{0}^{t}S^{\txts}(t-r)P^{\txts }  F_{R}(\Gamma(\xi^{\txtc}_{1},
\Theta_{-1}{W} )[i+1,\cdot])(r) ~\txtd r\\
 & + \int\limits_{0}^{t}S^{\txts}(t-r)  P^{\txts }G_{R}(\Gamma(\xi^{\txtc}_{1},
\Theta_{-1}{W} )[i+1,\cdot])(r) ~\txtd\Theta_{i}\bm{{W}}_ {r}.
 \end{flalign*}
 On the other hand~\eqref{fpomega} gives us on the stable part
 \begin{align*}
 \Xi^{\txts}(\xi^{\txtc},{W})[i,t] = \sum\limits_{k=-\infty}^{i} 
S^{\txts} (t+i-k) &
 \Bigg (\int\limits_{0}^{1} S^{\txts}(1-r)P^{\txts } F_{R}(\Xi(\xi^{\txtc},
{W} )[k-1,\cdot])(r) ~\txtd r \\
 & + \int\limits_{0}^{1} S^{\txts}(1-r)P^{\txts } G_{R}(\Xi(\xi^{\txtc},{W} )
[k-1,\cdot])(r) ~\txtd \Theta_{k-1} \bm{{W}}_{r} \Bigg)
 \end{align*}
 \vspace*{-0.4 cm}
 \begin{align*}
 & +\int\limits_{0}^{t} S^{\txts}(t-r) P^{\txts }F_{R}(\Xi(\xi^{\txtc},{W} )
[i,\cdot])(r) ~\txtd r \\
 & + \int\limits_{0}^{t} S^{\txts}(t-r)P^{\txts } G_{R}(\Xi(\xi^{\txtc},{W} )
[i,\cdot])(r) ~\txtd \Theta_{i} \bm{{W}}_{r}.
 \end{align*}
In order to prove the assertion one carries out a similar computation for the 
center component. One infers that

\begin{flalign*}
\Gamma(\xi^{\txtc},{W})[i,t] : = S^{\txtc}(t+i) \xi^{\txtc} -\sum\limits_{k=0}^{i+2} S^{\txtc} (t+i-k) &\Big(\int\limits_{0}^{1} 
S^{\txtc} (1-r) P^{\txtc }F_{R}(\Gamma(\xi^{\txtc},{W})[k-1,\cdot])(r) ~\txtd r \\
&+ \int\limits_{0}^{1} S^{\txtc}(1-r)P^{\txtc }G_{R}(\Gamma(\xi^{\txtc},{W})[k-1,\cdot])(r) 
~\txtd \Theta_{k-1} \bm{{W}}_{r}  \Big)\nonumber
\end{flalign*}
\begin{flalign*}
& \hspace*{10 mm}- \int\limits_{t}^{1} S^{\txtc} (t-r)P^{\txtc }F_{R}(\Gamma(\xi^{\txtc},{W})[i,\cdot])(r) 
~\txtd r - \int\limits_{t}^{1} S^{\txtc} (t-r)P^{\txtc } G_{R}(\Gamma(\xi^{\txtc},{W})
[i,\cdot])(r) ~\txtd \Theta_{i} \bm{{W}}_{r}
\end{flalign*}
\begin{flalign*}
 + \sum\limits_{k=-\infty}^{i} S^{\txts} (t+i-k) &\Big(\int\limits_{0}^{1} 
S^{\txts} (1-r) P^{\txts }F_{R}(\Gamma(\xi^{\txtc},{W})[k-1,\cdot])(r) ~\txtd r \\
 &+ \int\limits_{0}^{1} S^{\txts}(1-r)P^{\txts }G_{R}(\Gamma(\xi^{\txtc},{W})[k-1,\cdot])(r) 
~\txtd \Theta_{k-1} \bm{{W}}_{r}  \Big)\nonumber
 \end{flalign*}
 \vspace*{-0.4 cm}
 \begin{flalign*}
&\hspace*{10 mm} + \int\limits_{0}^{t} S^{\txts} (t-r)P^{\txts }F_{R}(\Gamma(\xi^{\txtc},{W})[i,\cdot]) (r)
~\txtd r + \int\limits_{0}^{t} S^{\txts} (t-r) P^{\txts }G_{R}(\Gamma(\xi^{\txtc},{W})
[i,\cdot])(r) ~\txtd \Theta_{i} \bm{{W}}_{r}.
\end{flalign*}
This shows that $\Xi(\xi^{\txtc},{W})$ is the fixed-point of $J_{R,d}({W},\cdot,
\xi^{\txtc})$. Due to uniqueness we have that $\Xi(\xi^{\txtc},{W})=
\Gamma(\xi^{\txtc},{W})$. This proves the statement. 
\end{proof}

\section{Exponential trichotomy}
\label{b} 

We shortly indicate how the results obtained in Section~\ref{lpm} can 
be applied if there additionally exists an unstable subspace. In that 
case one assumes that there exist constants $\rho_{1}>\rho_{2}\geq 0\geq 
-\rho_{2}>-\rho_{3}$ such that
\begin{align*}
& |S(t)P^{\txtc} x | \leq M_{c} \txte^{\rho_{2} |t|} |P^{\txtc}x|, 
~~ \mbox{  for } t\in\mathbb{R} \mbox{ and } x\in X;\\
& |S(t)P^{u} x| \leq  M_{u} \txte^{\rho_{1} t} |P^{u}x|, 
~~~\mbox{ for } t\leq 0 \mbox{ and } x\in X;\\
& |S(t)P^{\txts}x| \leq M_{s} \txte^{-\rho_{3} t} |P^{\txts}x|, 
~~\mbox{ for } t\geq 0 \mbox{ and } x\in X,
\end{align*}
holds true, compare~\cite[Sec.~7.1.2]{SellYou}.
Then, the continuous-time Lyapunov-Perron map given by
\begin{align*}
J({W}, U,\xi)[\tau] &: = S^{\txtc}(\tau) \xi^{\txtc} + 
\int\limits_{0}^{\tau} S^{\txtc} (\tau-r)P^{\txtc }\overline{F}(U)(r) ~\txtd r + 
\int\limits_{0}^{\tau} S^{\txtc} (\tau-r)P^{\txtc } \overline{G}(U)(r) ~\txtd \bm{{W}}_r\\
& +\int\limits_{-\infty}^{\tau} S^{\txts} (\tau-r) P^{\txts }\overline{F}(U)(r) ~\txtd r 
+ \int\limits_{-\infty}^{\tau} S^{\txts} (\tau-r) P^{\txts }\overline{G}(U)(r) ~\txtd \bm{{W}}_{r}\\
& - \int\limits_{\tau}^{\infty} S^{u} (\tau-r)P^{\txtu } \overline{F}(U)(r) ~\txtd r 
- \int\limits_{\tau}^{\infty} S^{u} (\tau-r) P^{\txtu }\overline{G}(r) ~\txtd \bm{{W}}_{r},
\end{align*}
has to be discretized for $\tau\in\mathbb{Z}$. In this case, we introduce 
for $\eta>0$ satisfying $\rho_{2}<\eta<\min\{\rho_{1},\rho_{3}\}$ the space
\begin{align*}
||\mathbb{U}||_{BC^{\eta} (D^{2\alpha}_{W})}:=\sup\limits_{i\in\mathbb{Z}} 
\txte^{-\eta |i| } || U^{i-1}, (U^{i-1})'||_{D^{2\alpha}_{W}}.
\end{align*}
Proceeding exactly as in Subsection~\ref{dlp} leads to the following 
definition of a discrete Lyapunov-Perron transform
$J_{d}({W},\mathbb{U},\xi):=(J^{1}_{d}({W},\mathbb{Y},\xi), 
J^{2}_{d}({W},\mathbb{U},\xi))$ for a sequence $\mathbb{U}\in 
BC^{\eta}(D^{2\alpha}_{W})$, $t\in[0,1]$, $W\in\Omega_{W}$ and $i\in\mathbb{Z}$:
\begin{align*}
&J^{1}_{d}({W}, \mathbb{U},\xi)[i-1,t] : = S^{\txtc}(t+i-1) 
\xi^{\txtc} \\
&  -\sum\limits_{k=0}^{i+1} S^{\txtc} (t+i-1-k) \left(
\int\limits_{0}^{1} S^{\txtc} (1-r)P^{\txtc } \overline{F}(U^{k-1})(r) ~\txtd r + 
\int\limits_{0}^{1} S^{\txtc}(1-r)P^{\txtc }\overline{G}(U^{k-1})(r) ~\txtd \Theta_{k-1} 
\bm{{W}}_{r}  \right)\nonumber\\
& - \int\limits_{t}^{1} S^{\txtc} (t-r)P^{\txtc }\overline{F}(U^{i-1})(r) ~\txtd r 
- \int\limits_{t}^{1} S^{\txtc} (t-r)P^{\txtc }\overline{G}(U^{i-1})(r) ~\txtd 
\Theta_{i-1} \bm{{W}}_{r}\nonumber\\
& + \sum\limits_{k=-\infty}^{i-1} S^{\txts} (t+i-1-k) 
\left(\int\limits_{0}^{1} S^{\txts} (1-r)P^{\txts } \overline{F}(U^{k-1})(r) 
~\txtd r + \int\limits_{0}^{1} S^{\txts}(1-r)P^{\txts }\overline{G}(U^{k-1})(r) 
~\txtd \Theta_{k-1} \bm{{W}}_{r}  \right)\nonumber\\
& + \int\limits_{0}^{t} S^{\txts} (t-r)P^{\txts }\overline{F}(U^{i-1})(r) ~\txtd r 
+ \int\limits_{0}^{t} S^{\txts} (t-r) P^{\txts }\overline{G}(U^{i-1})(r) ~\txtd 
\Theta_{i-1} \bm{{W}}_{r}\\
& + \int\limits_{0}^{t} S^{u} (t-r) P^{\txtu }\overline{F}(U^{i-1})(r)~\txtd r 
+ \int\limits_{0}^{t} S^{u}(t-r)P^{\txtu } \overline{G}(U^{i-1})(r)~\txtd 
\Theta_{i-1}\bm{{W}}_{r}\\
& +\sum\limits_{k=i-1}^{\infty} S^{u}(t+i-1-k)\left( 
\int\limits_{0}^{1}S^{u}(1-r) P^{\txtu }\overline{F}(U^{k-1})(r)~\txtd r 
+ \int\limits_{0}^{1} S^{u}(1-r)P^{\txtu }\overline{G}(U^{k-1})(r) ~\txtd\Theta_{k-1}\bm
{W}_{r} \right).
\end{align*}
Of course, $J^{2}_{d}({W},\mathbb{U},\xi):=(J^{1}_{d}({W},\mathbb{Y},\xi))'$.
By the same arguments employed in Theorem~\ref{contraction} one can infer that 
the gap condition is now given by

\begin{align*}
K  \left( \frac{ \txte^{\eta-\rho_{2}}(C_{S}M_{c}\txte^{-\eta}+1)}{1-
\txte^{-(\eta-\rho_{2})}} +  \frac{\txte^{\rho_{1}-\eta} (C_{S}M_{u}
\txte^{-\eta}+1)}{1-\txte^{-(\rho_{1}-\eta)}} + \frac{\txte^{\rho_{3}-
\eta} (C_{S}M_{s}\txte^{-\eta}+1)}{1-\txte^{-(\rho_{3}-\eta)}} \right) <\frac{1}{4}.
\end{align*}
Hence, using our techniques one can also prove the existence of center-unstable, 
respectively center-stable manifolds for rough differential equations following 
the steps as demonstrated above.


\end{document}